\documentclass[reqno,11pt]{amsart}
\usepackage{relsize}
\usepackage{scalerel}
\usepackage[margin=1in]{geometry}
\usepackage{stackengine,wasysym}
\usepackage{todonotes}
\usepackage{comment}
\usepackage{xcolor}
\usepackage{mathrsfs}
\usepackage{dsfont}
\usepackage{mathtools}
\mathtoolsset{showonlyrefs}
\usepackage{enumerate}
\usepackage[hyperfootnotes=false,colorlinks=true,linkcolor = blue,
urlcolor  = blue, citecolor = blue]{hyperref}
\usepackage[sort]{cite}
\usepackage{float}
\usepackage{amsmath, amsthm, amssymb}
\usepackage{times}
\usepackage{color}

\numberwithin{equation}{section}

\allowdisplaybreaks
\usepackage{hyperref}

\newcommand{\pa}{\partial}
\newcommand{\la}{\label}
\newcommand{\fr}{\frac}
\newcommand{\na}{\nabla}
\newcommand{\be}{\begin{equation}}
\newcommand{\ee}{\end{equation}}
\newcommand{\ba}{\begin{array}{l}}
\newcommand{\ea}{\end{array}}

\DeclareMathOperator*{\esssup}{ess\,sup}

\newtheorem{thm}{Theorem}[section]
\newtheorem{prop}[thm]{Proposition}

\newtheorem{cor}[thm]{Corollary}

\newtheorem{rem}{Remark}

\newcommand{\beg}{\begin}

\usepackage{MnSymbol,wasysym}
\newcommand{\N}{\mathbb N}

\newcommand{\R}{\mathbb R}

\def\RR{{\mathbb R}}
\def\TT{{\mathbb T}}
\def\NN{\mathbb N}

\def\ek{{\eta_k}}

\title[3D Nernst-Planck-Boussinesq system]{On the three-dimensional Nernst-Planck-Boussinesq System}

\author[E. Abdo]{Elie Abdo}
\address[E. Abdo]
{	Department of Mathematics \\
     University of California  \\
	Santa Barbara, CA 93106, USA.} \email{elieabdo@ucsb.edu}

\author[R. Hu]{Ruimeng Hu}
\address[R. Hu]
{Department of Mathematics \\
Department of Statistics and Applied Probability \\
     University of California  \\
	Santa Barbara, CA 93106, USA.} \email{rhu@ucsb.edu}

\author[Q. Lin]{Quyuan Lin}
\address[Q. Lin]
{	School of Mathematical and Statistical Sciences \\
Clemson University\\
Clemson, SC 29634, USA.} \email{quyuanl@clemson.edu}
\date{May 3, 2024}

\begin{document}

\begin{abstract}
     In this paper, we analyze a three-dimensional Nernst-Planck-Boussinesq (NPB) system that describes ionic electrodiffusion in an incompressible viscous fluid. This new model incorporates variational temperature and is forced by buoyancy force stemming from temperature and salinity fluctuations, enhancing its generality and realism. The electromigration term in the NPB system displays a complex nonlinear structure influenced by the reciprocal of the temperature that distinguishes its mathematical aspects from other electrodiffusion models studied in the literature. We address the global existence of weak solutions to the NPB system on the three-dimensional torus for large initial data. In addition, we study the long-time dynamics of these weak solutions and the associated relative entropies and establish their exponential decay in time to steady states.
\end{abstract}

\maketitle

Keywords: electrodiffusion, Nesnst-Planck-Boussinesq system, variational temperature, salinity, buoyancy force, global weak solution, long-time dynamics

\section{Introduction}
Electrodiffusion of ions is a phenomenon that takes place in electrolyte solutions when charged ions are transported in a fluid and undergo the influence of an electric field. Ionic electrodiffusion is prevalent in various real-world applications, including but not limited to neuroscience \cite{jasielec2021electrodiffusion,nicholson2000diffusion,qian1989electro,pods2013electrodiffusion,mori2009numerical,lopreore2008computational,koch2004biophysics,cole1965electrodiffusion,savtchenko2017electrodiffusion}, semiconductor theory \cite{biler1994debye,gajewski1986basic,mock1983analysis}, water purification, desalination, and ion separation \cite{alkhad2022electrochemical,zhu2020ion,lee2018diffusiophoretic,yang2019review,gao2014high,lee2016membrane}, and battery performance and longevity \cite{tan2016computational}. Recently, there has been a resurgence of interest in ionic electrodiffusion, driven by advancements in the ability to precisely control ion transport through selectively charged nanoscale membranes \cite{constantin2019nernst}.

Within a fluid, the dynamics of ions are governed by three fundamental mechanisms: transport by the velocity of the fluid, diffusion by the gradient of the ionic concentrations, and electromigration by the gradient of the electric potential arising from the motion of ions. The electric field generated by the total charge density gives rise to electric forces that govern the fluid flow.  These processes are mathematically described by the Nernst-Planck (NP) equations
\cite{rubinstein1990electro, schmuck2009analysis} that are commonly coupled with various fluid systems depending on the medium, including the Nernst-Planck-Navier-Stokes (NPNS) system in viscous fluids \cite{constantin2019nernst,constantin2021interior,constantin2022existence,abdo2022space,fischer2017global,liu2020global,constantin2021nernst,constantin2022nernst,bothe2014global}, the Nernst-Planck-Euler (NPE) system in ideal fluids \cite{ignatova2021global,abdo2022global,shen2022stability,zhang2015global,zhang2020inviscid}, and the Nernst-Planck-Darcy (NPD) system in porous media \cite{ignatova2022global,abdo2022global}.

Although these models have been extensively studied over the last two decades, there are still many unsolved related mathematical problems, especially under pragmatic physical aspects that have not been taken into account in the literature. A critical constraint in the previously studied models is the exclusion of temperature variations, which simplifies the mathematical challenges that could arise from the nonlinear structure of the model but also limits its real-world application.
In addition, the salinity resulting from the variation of the ionic concentrations creates a buoyancy force that mainly affects the momentum but has also been disregarded in the aforementioned studied models. In order to enhance our physical understanding of such electrodiffusion phenomena, particularly in the context of groundwater and seawater \cite{yang2010numerically,gray1976validity,mayeli2021buoyancy,petcu2009some}, it is imperative to seek further investigations from mathematical and physical points of view. To this end, we analyze a new model called the Nernst-Planck-Boussinesq (NPB) system, where the NP equations and Boussinesq equations are coupled, capturing variations in temperature and accounting for buoyancy forces.

\subsection{Nernst-Planck-Boussinesq system}

Electrodiffusion of $N$ ions is described by the NP equations
\begin{equation}\label{eqn:NP}
    \partial_t c_i + \nabla\cdot j_i = 0, \quad  j_i = uc_i - D_i\nabla c_i - D_i\frac{ez_i}{k_B T} c_i \nabla \Psi, \quad i=1,\dots, N,
\end{equation}
\normalsize
where the nonnegative functions $c_i(x,t)$ denote the $i$-th ionic species concentration, and the flux, denoted by $j_i$, encompasses three components: advection, diffusion, and electromigration. Here $x\in \Omega\subseteq \mathbb R^d$ is the spatial variable and $t\geq 0$ represents the time. The divergence-free function $u(x,t)$ is the velocity field. The positive constants $D_i$, which can vary between different ionic species, denote their respective diffusivities. Additionally, $e$ refers to the elementary charge, the constants $z_i\in \mathbb R$ denote the valences, $k_B$ is Boltzmann's constant, and $T>0$ represents the absolute temperature. The electromigration form can also be written as $-D_i \frac{Fz_i}{RT} c_i \nabla\Psi$, where $F=eN_A$ is the Faraday constant, $R=k_B N_A$ is the molar gas constant, and $N_A$ represents the Avogadro constant.
The electric potential $\Psi$ obeys Gauss's Law (or Maxwell's first equation) and solves a Poisson equation
\begin{equation}\label{eqn:poisson}
    -\varepsilon \Delta \Psi = \rho, \text{ with } \rho =  eN_A\sum\limits_{i=1}^N z_i c_i = F\sum\limits_{i=1}^N z_i c_i,
\end{equation}
\normalsize
where $\varepsilon>0$ is a constant representing the dielectric permittivity of the solvent, and
$\rho$ is the charge density. We refer the reader to \cite{rubinstein1990electro} for a more detailed physical interpretation of the NP equations.

The velocity $u$ and the temperature $T$ obey the Boussinesq approximation: 
\noeqref{eqn:bouss-1}
\noeqref{eqn:bouss-2}
\noeqref{eqn:bouss-3}
\noeqref{eqn:bouss-4}
\noeqref{eqn:bouss-5}
\begin{subequations}\label{sys:bouss}
    \begin{align}
   &\beta_0(\partial_t u + u\cdot \nabla u - \nu\Delta u) + \nabla p = -g\beta \vec{k} - \rho \nabla\Psi, \label{eqn:bouss-1}
   \\
   &\nabla\cdot u = 0,\label{eqn:bouss-2}
   \\
   &\beta=\beta_0(1-\alpha_T(T-T_r) + \alpha_S(S-S_r)),\label{eqn:bouss-3}
   \\
   &\partial_t T + u\cdot \nabla T - \kappa\Delta T = 0,\label{eqn:bouss-4}
   \\
   &S=\sum\limits_{i=1}^N c_i M_i.\label{eqn:bouss-5}
\end{align}
\end{subequations}
Here $S$ is the salinity, $\beta$ is the density, and $T_r$, $S_r$, $\beta_0$ are the average (or reference) values of $T$, $S$, $\beta$, respectively. For simplicity and without loss of generality, we assume $\beta_0=1$. The quantities $\nu>0$ and $\kappa>0$ denote the viscosity and diffusivity, respectively, and $M_i$ is the Molar mass of the $i$-th ionic species. In the Boussinesq approximation, the density is taken to be constant in space and time (homogeneous) except when it is multiplied by $g$, the gravitational acceleration. The nonnegative constants $\alpha_T$ and $\alpha_S$ appearing in \eqref{eqn:bouss-3} are expansion coefficients (see \cite{petcu2009some}). In particular, we assume that the temperature of the whole system is around $T_r$, which makes the Boussinesq approximation a suitable model. In contrast with \cite{petcu2009some}, where the salinity $S$ has its own evolution equation, our current model provides a framework in which $S$ is fully and solely determined and understood from the behavior of the ionic concentrations.

The Boussinesq system \eqref{sys:bouss} coupled with the NP equations \eqref{eqn:NP}--\eqref{eqn:poisson} will be referred to as the NPB system.  In this work, we address in detail the existence of global weak solutions to this model and their long-time behavior on the three-dimensional torus $\TT^3 := \mathbb R^3/\mathbb Z^3$ with periodic boundary conditions.

When $T\equiv T_r$ and $\alpha_S=0$ in the NPB system, corresponding to the case that the variation of temperature and the buoyancy force are ignored, one can combine $g\vec{k}=\nabla(gz)$ with the pressure gradient to recover the NPNS system. In other words, the NPNS system is a special case of the NPB system. A fundamental distinction between the NPB and NPNS systems resides in the nonlinear nature of the temperature effects when taken into consideration. In contrast with the NPB system where the temperature fluctuates and majorly affects the motion of ions, the temperature in the formerly studied NPNS models was assumed to be constant in both space and time, exerting no impact neither on the dynamics of the fluid nor on the evolution of ionic concentrations. 
Another distinction lies in the inclusion of the salinity effects in the buoyancy term, which introduces an additional contribution of the ionic concentrations to the momentum equation \eqref{eqn:bouss-1}.

The NPNS system has been intensely studied in various settings under different boundary conditions. 
The existence of global weak solutions in 3D with blocking boundary conditions has been established in \cite{fischer2017global,jerome2009global}. Global existence for small initial data and forces was proved in \cite{ryham2009existence,schmuck2009analysis}. Under Robin boundary conditions imposed on the electric potential and blocking boundary conditions imposed on the ionic concentrations, the global existence and stability in 2D have been established in \cite{bothe2014global}. 
In \cite{constantin2019nernst}, the authors considered the 2D case under various physical boundary conditions for $c_i$ and demonstrated the existence of smooth global solutions that converge to steady states. The existence of strong solutions in 3D was established in \cite{constantin2021nernst} in the case of two ionic species and many ionic species having equal diffusivities. In \cite{constantin2022nernst}, the nonlinear stability of Boltzmann states was obtained on 2D and 3D bounded domains while instabilities were reported in  \cite{rubinstein2000electro, rubinstein2008direct, zaltzman2007electro}, depending on the boundary conditions obeyed by the potential and concentrations. In the periodic setting, the long-time dynamics were explored in  \cite{abdo2021long,abdo2024long} and the analyticity of  solutions was established in \cite{abdo2022space}.

\subsection{Main challenges and the strategies}

The electromigration term $\na \cdot (D_i\frac{ez_i}{k_B T} c_i \na \Psi)$ in \eqref{eqn:NP} is a source of main challenges in the analysis of the NPB system. An elementary scrutinization of the algebraic structure of this term motivates a non-vanishing condition on the temperature $T$ for the sake of avoiding a vanishing denominator. This issue is tackled by assuming that the initial absolute temperature $T_0(x)$ is bounded from below by a positive constant $T^*$. This initial boundedness criterion is conserved in time, a fact that follows by adapting the approach in \cite{constantin2019nernst} (employed to guarantee the nonnegativity of the ionic concentrations) to our current model. Such an assumption is consistent with the Third Law of Thermodynamics and is universally valid in practical applications.

Further major challenges arise from the super nonlinear aspect of the migration term. When the temperature is invariant (constant), and the salinity effects are neglected, the model turns out to have a well-understood dissipative structure in its most general setting ($N$ ionic species with different valences and diffusivities). In the latter situation, the time evolution of the entropy generated 
by $E:=\sum_{i=1}^{N} \int_{\TT^3} c_i \log \frac{c_i}{\bar{c}_i} dx$ (where $\bar{c}_i$ denotes the spatial mean of $c_i$ over $\TT^3$) and $\frac{\varepsilon}{2} \|\na \Psi\|_{L^2}^2$ amounts to the sum of two major components: the term $-\sum_{i=1}^{N} D_i \left\|\frac1{\sqrt{c_i}}(\na c_i + \frac{ez_i c_i \na \Psi}{k_{B} T}) \right\|_{L^2}^2 \le 0$ which speeds up the loss of energy, and a forcing term $\int_{\TT^3} -u \cdot \na \rho \Psi dx$ that gets canceled via coupling with the spatial $L^2$ time evolution of the velocity. A major tool employed in this aforementioned derivation is integration by parts, which is temperature-insensitive when $T$ is spatially-independent.  In the case of a variational temperature, the aforementioned technique breaks down, and some nonlinear terms involving the temperature and its derivatives appear and beat the dissipative structure of the system. 

In this paper, we address the case of $N$ ionic species with different valences but equal diffusivities, i.e., $D_1 = \dots = D_N = D$. This particular setting allows us to study the time evolution of the charge density $\rho$ via multiplying the concentration equations by $N_Aez_i$ and summing over all indices $i \in \left\{1, \dots, N \right\}$, in which case the diffusion terms  $-\sum_{i=1}^{N}DN_Aez_i\Delta c_i$ boil down to $-D\Delta \rho$, resulting in the following energy evolution
\be \la{introe}
\frac{\varepsilon}{2} \frac{d}{dt} \|\na \Psi\|_{L^2}^2 + \fr{D}{\varepsilon}\|\rho\|_{L^2}^2 + \sum\limits_{i=1}^{N} \fr{DN_Az_i^2e^2}{k_{B}} \int_{\TT^3} \frac{1}{T} c_i |\na \Psi|^2 dx = -\int_{\TT^3} u\cdot\nabla \rho \Psi := \mathcal{V}.
\ee 
In parallel, we turn our attention to the evolutions of $\|u\|_{L^2}^2$ and $\sum_{i=1}^{N} \int_{\TT^3} c_i \log \fr{c_i}{\bar{c}_i} dx$, and we observe that the former energy produces some bad terms that could be nontrivially controlled by the latter's dissipation and vice versa. While some expectedly unpleasant behavior arises from the electromigration term, some unexpected mess occurs due to the salinity effects. More precisely, the velocity contributes to some crucial cancellations in the coupled system (namely, it cancels $\mathcal{V}$ in \eqref{introe}), provided that the integrals $\int_{\TT^3} uc_i dx$ are in good shape. However, as the ionic concentrations do not enjoy high regularity but are only invariant in $L^1$, we seek a decomposition of the product $uc_i$ into $u \sqrt{c_i} \sqrt{c_i}$, estimate using H\"older's inequality and interpolation between $L^2$ and $H^1$. While the $L^2$ norm of $\sqrt{c_i}$ remains constant in time, that of $\na \sqrt{c_i}$ requires some challenging work to be fully dominated by the dissipation governing the evolution of the entropy $E$ and \eqref{introe}. We successfully tackle all these technicalities and construct weak solutions that are global in time for large initial data. We point out that the ideas involved in this work are new and distinguish the analysis of our system from that of the NPNS system.

The existence of weak solutions is based on a regularization scheme of the NPB model by which the nonlinearities and initial data are mollified. The smoothed-out systems have global $C^{\infty}$ solutions that are bounded in specific Lebesgue spaces, uniformly in $\eta$. However, this uniform boundedness depends upon a uniform-in-$\eta$ control of the initial energies whose evolutions are addressed. In order to achieve such an approximation, we prove new $L \log L$ estimates (Proposition \ref{fenchel}) for standard mollification operators to obtain good control of the initial mollified entropy. The proof is based on the concavity of the logarithmic function and Fenchel's inequality. In order to address the exponential decay in time of weak solutions $(c_i, u, T)$ to their steady states $(\bar{c}_i, 0, T_r)$, we make use of logarithmic Sobolev inequalities and the Csiszar-Kullback-Pinsker inequality. In addition, we state and prove a general result on the convergence of entropies (Proposition \ref{entropyconvergence}) and apply it to establish the exponential decay in time of the entropy $\int_{\TT^3} c_i \log\frac{c_i}{\bar{c}_i} dx$ to $0$. Some size constraint on $\overline{c_i(0)}$ is needed due to the salinity term through the buoyancy force.

In the general setting of different diffusivities, the nice structure $-\sum_{i=1}^N DN_Aez_i \Delta c_i = -D\Delta \rho$ breaks down, and the appearance of the crossing terms $\int_{\mathbb T^3} c_i c_j dx$ is out of control. The usual entropy machinery that is extensively used in the literature of NPNS systems does not apply to the NPB model due to the spatially-dependent temperature. Such a more general setting for the NPB system needs further investigation and falls beyond the scope of this paper.

\subsection{Organization of the paper} In Section \ref{sec:pre-results}, we introduce some notations that will be used throughout this paper, and we state the main results. In Section \ref{sec:reg}, we construct an $\eta$-approximate regularization scheme for the NPB system based on mollifications, address its global well-posedness, and establish estimates that are uniform in $\eta$. In Section \ref{sec:existence}, we prove the main result on the global existence of weak solutions to the NPB system. In Section \ref{sec:longtime}, we study the long-time dynamics of the weak solutions obtained in Section \ref{sec:existence} and the corresponding entropies, and we establish their exponential decay in time to steady states.
Finally, we summarize the 
main results of this paper and present some interesting related open problems in Section \ref{sec:conclusion}.

\section{Main Results}\label{sec:pre-results}

\subsection{Notations}
Throughout the paper, $C$ denotes a positive universal constant, and it may change from step to step. When necessary, we denote by $C_{a,b,\dots}$ a positive constant depending on $a,b,\dots$. For $p\in [1,\infty]$ we denote by $L^p(\mathbb T^3)$ the Lebesgue spaces of measurable functions $f$ from $\mathbb T^3$ to $\R$ (or $\RR^3)$ such that
\be 
\|f\|_{L^p} = \left(\int_{\mathbb T^3} |f(x)|^p dx\right)^{1/p} <\infty \,\, \text{if} \,\, p \in [1, \infty), \quad \|f\|_{L^{\infty}} = \esssup_{x\in\mathbb T^3}  |f(x)| < \infty \,\, \text{if} \,\, p = \infty.
\ee
The $L^2$ inner product is denoted by $\langle \cdot,\cdot\rangle$. For $s \in \NN$, we denote by $H^s(\mathbb T^3)$ the Sobolev spaces of measurable functions $f$ from $\mathbb T^3$ to $\R$ (or $\RR^3)$ with weak derivatives of order $s$ such that  
$
\|f\|_{H^s}^2 = \sum_{|\alpha| \le s} \|D^{\alpha}f\|_{L^2}^2 < \infty.
$  
We often write $L^p$ and $H^s$ instead of $L^p(\TT^3)$ and $H^s(\TT^3)$ for simplicity. For periodic function $f$ we have its Fourier series representation
\begin{equation}
    f = \sum\limits_{k\in \mathbb Z^3} \hat{f}_k e^{-2\pi ik\cdot x}, \quad \text{with} \quad \hat{f}_k = \int_{\TT^3} f(x) e^{2\pi ik\cdot x}.
\end{equation}
The $H^s$ norm of periodic function $f$ has an equivalent form
$
    \|f\|_{H^s}^2 = \sum_{k\in \mathbb Z^3} (1+|k|^{2s})|\hat{f}_k|^2.
$
We denote the dual space of $H^s$ by $H^{-s}$ for $s\in \mathbb N$.

For a Banach space $(X, \|\cdot\|_{X})$ and $p\in [1,\infty]$, we consider the Lebesgue spaces $ L^p(0,T; X)$ of functions $f$  from $X$ to $\R$ (or $\RR^3)$ satisfying 
$
\int_{0}^{T} \|f\|_{X}^p dt  <\infty
$ with the usual convention when $p = \infty$. 

Denote by
$
\mathcal V := \left\{u \in C^\infty(\mathbb T^3): \na \cdot u = 0, \, \int_{\TT^3} u dx = 0 \right\},
$ 
and let $H$ and $V$ be the $L^2$ and $H^1$ closure of $\mathcal V$. Let $P_\sigma$ be the Leray projection and $A=-P_\sigma \Delta$ be the Stokes operator. Then the domain $\mathcal D(A) = H^2\cap V$. We denote dual spaces of $V$ and $\mathcal D(A)$ by $V'$ and $\mathcal D(A)'$, respectively.

By combining $\nabla p$ with $g\vec{k}=\nabla (gz)$ as a new pressure gradient and taking $D_1=D_2=\dots=D_N=D$, the NPB system reads
\noeqref{eqn:npb-full-1}
\noeqref{eqn:npb-full-2}
\noeqref{eqn:npb-full-3}
\noeqref{eqn:npb-full-4}
\noeqref{eqn:npb-full-5}
\begin{subequations}\label{sys:npb-full}
    \begin{align}
        &\partial_t c_i + u\cdot \nabla c_i  - D\Delta c_i - D\frac{e}{k_B}\nabla\cdot (z_i\frac{1}{T} c_i \nabla \Psi)= 0,\label{eqn:npb-full-1}
        \\
        &-\varepsilon\Delta \Psi = \rho = \sum\limits_{i=1}^N N_Ae z_i c_i, \label{eqn:npb-full-2}
        \\
        &\partial_t u + u\cdot \nabla u - \nu\Delta u + \nabla p = g\left(\alpha_T(T-T_r) - \alpha_S\left(\sum\limits_{i=1}^N c_i M_i-S_r\right)\right) \vec{k} - \rho \nabla\Psi, \label{eqn:npb-full-3}
        \\
        &\nabla\cdot u = 0,\label{eqn:npb-full-4} 
        \\
        &\partial_t T + u\cdot \nabla T - \kappa\Delta T = 0.\label{eqn:npb-full-5}
    \end{align}
\end{subequations}
We denote the initial conditions of $(c_i,u,T)$ by $(c_i(0),u_0,T_0)$.
In the periodic case, the reference temperature $T_r$ and reference salinity $S_r$ are 
$T_r= \bar{T}$ and $ S_r = \sum_{i=1}^N \bar{c}_i M_i$,
where $\bar{T} := \frac1{\int_{\mathbb T^3} 1 dx}\int_{\mathbb T^3} T dx = \int_{\mathbb T^3} T dx$ and $\bar{c}_i := \frac1{\int_{\mathbb T^3} 1 dx}\int_{\mathbb T^3} c_i dx= \int_{\mathbb T^3} c_i dx$ are invariant in time. These in turn give that $\bar{u}=\frac1{\int_{\mathbb T^3} 1 dx}\int_{\mathbb T^3} u dx=\int_{\mathbb T^3} u dx$ is invariant in time. Therefore, $\bar{u}_0=0$ implies that $\bar{u}=0$ for any time. 

From \eqref{eqn:npb-full-2}, we have the following relation between $\Psi_0$ and $\rho_0$ in terms of $c_i(0)$:
\begin{equation}
    -\varepsilon \Delta \Psi_0 = \rho_0 = \sum_{i=1}^N N_Aez_i c_i(0).
\end{equation}
Due to the periodic boundary conditions, we see that $\int_{\TT^3} \rho dx =0$ for all times from \eqref{eqn:npb-full-2}. Therefore,
we need to impose a compatibility condition on the initial concentrations:
\begin{equation}\label{eqn:compa}
    \sum\limits_{i=1}^N N_Aez_i \overline{c_i(0)} = 0.
\end{equation}

\subsection{Main results} The following theorem concerns the global existence of weak solutions to  \eqref{sys:npb-full} in $\TT^3$.
\begin{thm}\label{thm:main-torus}
   Suppose that $c_i(0)$ is nonnegative and satisfies the compatibility condition \eqref{eqn:compa}, and $T_0$ is bounded below by $T_0\geq T^* >0$. Assume that $\nabla\Psi_0\in L^2$,
    $c_i(0)\in L^1$, $c_i(0)\log c_i(0)\in L^1$, $u_0\in H$, and $T_0\in L^{3+\delta}$ for a fixed and arbitrarily small $\delta>0$. Then for any time $\mathcal T>0$ there exists a weak solution $(c_i,u,T)$ to system \eqref{sys:npb-full} on $[0,\mathcal T]$ satisfying
    \begin{equation}\label{regularity-thm}
        \begin{split}
            &u\in L^\infty(0,\mathcal T; H) \cap L^2(0,\mathcal T; V), \quad 
         T\in L^\infty(0,\mathcal T; L^{3+\delta}) \cap L^2(0,\mathcal T; H^1), 
         \\
        &c_i\in L^\infty(0,\mathcal T; L^1) \cap L^2(0,\mathcal T; L^{\frac32})\cap L^\frac43(0,\mathcal T;  W^{1,\frac65}),
        \end{split}
    \end{equation}
    with the property that $c_i(x,t)\geq 0$ and $T(x,t)\geq T^*$ for a.e. $(x,t)\in\mathbb T^3\times [0,\mathcal T]$, and such that
    \noeqref{thm:eqn2}
    \begin{align}
        &\int_0^{\mathcal T} \left(-\langle c_i, \partial_t \phi\rangle - \langle u \cdot \nabla \phi, c_i\rangle + D\langle \nabla c_i + \frac{e}{k_B}z_i\frac1Tc_i\nabla\Psi, \nabla \phi\rangle \right) dt  = \langle c_i(0),\phi(0)\rangle, \label{thm:eqn1}
        \\
        &\int_0^{\mathcal T} \left( -\langle u,\partial_t \varphi\rangle - \langle u\cdot\nabla\varphi, u\rangle + \nu \langle \nabla u, \nabla\varphi\rangle \right) dt 
        \\
        &\hspace{1cm}= \langle u_0, \varphi(0)\rangle + \int_0^{\mathcal T}  \Big( \langle g(\alpha_T(T-T_r) - \alpha_S(\sum\limits_{i=1}^N c_i M_i-S_r)) ,\varphi_3 \rangle -  \langle \rho\nabla\Psi, \varphi\rangle \Big) dt, \label{thm:eqn2}
        \\
        &\int_0^{\mathcal T} \left(-\langle T, \partial_t \phi\rangle - \langle u \cdot \nabla \phi, T\rangle + \kappa\langle \nabla T, \nabla \phi\rangle \right) dt  = \langle T_0,\phi(0)\rangle, \label{thm:eqn3}
    \end{align}
    for any scalar test function $\phi$ and vector-valued test function $\varphi$ such that $\phi, \varphi \in C^\infty([0,\mathcal T]\times \TT^3)$ with $\phi(T)=\varphi(T)=0$ and $\nabla\cdot \varphi=0$, where $\Psi$ and $\rho$ satisfy \eqref{eqn:npb-full-2}.
\end{thm}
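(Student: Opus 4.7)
The plan is to implement the four-stage scheme announced in the introduction. First, following Section \ref{sec:reg}, I would mollify both the nonlinearities and the initial data at scale $\eta>0$ to obtain an approximate NPB system admitting global smooth solutions $(c_i^\eta, u^\eta, T^\eta, \Psi^\eta)$. Propagation in time of the nonnegativity $c_i^\eta\ge 0$ and of the pointwise lower bound $T^\eta\ge T^*$ would be obtained by the truncation/convexity argument of \cite{constantin2019nernst} adapted to the mollified equations, so that $1/T^\eta$ is uniformly bounded above and the relevant nonlinear functionals (in particular $\log c_i^\eta$) are well-defined. I would then extract $\eta$-uniform bounds matching \eqref{regularity-thm}, deduce enough compactness to pass to the limit $\eta\to 0$, and verify the limit satisfies the weak formulations \eqref{thm:eqn1}--\eqref{thm:eqn3}.

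The uniform a priori bounds would emerge from combining four energy identities: the $L^2$ identity for $u^\eta$ from \eqref{eqn:npb-full-3}; the entropy identity for $E^\eta=\sum_i\int_{\mathbb T^3}c_i^\eta\log(c_i^\eta/\bar c_i^\eta)\,dx$ obtained by testing the concentration equation against $\log c_i^\eta$; the identity \eqref{introe} for $\tfrac{\varepsilon}{2}\|\nabla\Psi^\eta\|_{L^2}^2$; and an $L^{3+\delta}$ identity for $T^\eta$ obtained by testing \eqref{eqn:npb-full-5} against $(T^\eta)^{2+\delta}$. The decisive cancellation, afforded by $D_1=\dots=D_N=D$, is that the forcing $\mathcal V=-\int u^\eta\cdot\nabla\rho^\eta\,\Psi^\eta\,dx$ on the right of \eqref{introe} is annihilated by the electric work $-\int\rho^\eta\nabla\Psi^\eta\cdot u^\eta\,dx$ appearing in the velocity identity, via integration by parts and $\nabla\cdot u^\eta=0$; simultaneously, the electromigration contribution to $dE^\eta/dt$ reproduces, with opposite sign and up to a controllable mollification defect, the nonnegative term $\sum_i DN_Az_i^2e^2k_B^{-1}\int(c_i^\eta/T^\eta)|\nabla\Psi^\eta|^2\,dx$ from \eqref{introe}.

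The residual buoyancy contribution $g\int(\alpha_T(T^\eta-T_r)-\alpha_S(S^\eta-S_r))u_3^\eta\,dx$ would be absorbed into $\tfrac{\nu}{2}\|\nabla u^\eta\|_{L^2}^2$ by Poincar\'e, H\"older, and Young, exploiting the $L^{3+\delta}$ bound on $T^\eta$ and the $L^1$ bound on $c_i^\eta$ dualized against $u^\eta\in L^6$. The requisite $\eta$-independence of $E^\eta(0)$ is the content of Proposition \ref{fenchel}. Summing the identities with appropriate weights yields $u^\eta\in L^\infty(0,\mathcal T;H)\cap L^2(0,\mathcal T;V)$, $T^\eta\in L^\infty(0,\mathcal T;L^{3+\delta})\cap L^2(0,\mathcal T;H^1)$, $\nabla\Psi^\eta\in L^\infty(0,\mathcal T;L^2)$, $E^\eta\in L^\infty(0,\mathcal T)$, and crucially $\sqrt{c_i^\eta}\in L^2(0,\mathcal T;H^1)$. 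From the latter I recover $c_i^\eta\in L^2(0,\mathcal T;L^{3/2})$ via the interpolation $\|c_i^\eta\|_{L^{3/2}}=\|\sqrt{c_i^\eta}\|_{L^3}^2\lesssim\|\sqrt{c_i^\eta}\|_{L^2}\|\sqrt{c_i^\eta}\|_{H^1}$, and $c_i^\eta\in L^{4/3}(0,\mathcal T;W^{1,6/5})$ from $\nabla c_i^\eta=2\sqrt{c_i^\eta}\nabla\sqrt{c_i^\eta}$ and H\"older, matching the regularity classes in \eqref{regularity-thm}.

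For the limit passage I would bound $\partial_t c_i^\eta$, $\partial_t u^\eta$, and $\partial_t T^\eta$ in suitable negative-order spaces using the mollified equations, and apply Aubin--Lions to upgrade weak to strong convergence of $c_i^\eta$ in $L^2(0,\mathcal T;L^{3/2})$, of $u^\eta$ and $T^\eta$ in $L^2(0,\mathcal T;L^2)$, and, via elliptic regularity on \eqref{eqn:npb-full-2}, of $\nabla\Psi^\eta$ in $L^2(0,\mathcal T;L^3)$. The hard part will be passing to the limit in the electromigration flux $z_ic_i^\eta\nabla\Psi^\eta/T^\eta$: pairing strong $L^2_tL^{3/2}$ convergence of $c_i^\eta$ with strong $L^2_tL^3$ convergence of $\nabla\Psi^\eta$ gives strong $L^1_{t,x}$ convergence of $c_i^\eta\nabla\Psi^\eta$, and the uniform bound $0<1/T^\eta\le 1/T^*$ together with the a.e. convergence of $T^\eta$ allows dominated convergence to replace $1/T^\eta$ by $1/T$. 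The persistent underlying obstacle is that the standard NPNS entropy-dissipation balance is broken by both the $1/T$ factor and the salinity-driven buoyancy; the whole scheme is organized around the equal-diffusivity identity \eqref{introe}, the $L\log L$ continuity of mollification (Proposition \ref{fenchel}), and the $\sqrt{c_i^\eta}$ interpolation above.
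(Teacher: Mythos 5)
Your overall architecture matches the paper's: mollification of nonlinearities and data, global smooth solutions with propagated positivity, the combined electrostatic--kinetic--entropy energy balance exploiting equal diffusivities and the cancellation of $\mathcal V$, Proposition \ref{fenchel} for the initial entropy, Aubin--Lions, and a term-by-term limit passage. However, there is a genuine gap in your treatment of the salinity part of the buoyancy force, which is precisely one of the two new difficulties this model introduces. You claim that $g\alpha_S\int_{\TT^3}(\sum_i c_i^\eta M_i-S_r)u_3^\eta\,dx$ can be absorbed into $\tfrac{\nu}{2}\|\nabla u^\eta\|_{L^2}^2$ using ``the $L^1$ bound on $c_i^\eta$ dualized against $u^\eta\in L^6$.'' This pairing is not a valid H\"older duality ($L^1$ pairs with $L^\infty$, not $L^6$), and the only uniform-in-time information on $c_i^\eta$ at this stage \emph{is} the $L^1$ bound; no uniform $L^{6/5}_x$ bound is available. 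The paper's resolution is structurally different and cannot be bypassed: one writes $\|c_i^\eta-\bar c_i^\eta\|_{L^{3/2}}\lesssim\|\nabla c_i^\eta\|_{L^1}\le 2\|\sqrt{c_i^\eta}\|_{L^2}\|\nabla\sqrt{c_i^\eta}\|_{L^2}$ via $W^{1,1}\hookrightarrow L^{3/2}$, pairs with $\|u^\eta\|_{L^3}\lesssim\|u^\eta\|_{L^2}^{1/2}\|\nabla u^\eta\|_{L^2}^{1/2}$, and after Young's inequality is left with $\tfrac{DN_Ak_BT^*}{2}\|\nabla\sqrt{c_i^\eta}\|_{L^2}^2+C\,\overline{c_i(0)}^2\|u^\eta\|_{L^2}^2$. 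The first piece must be absorbed by the \emph{entropy} dissipation (this forces the specific weight $2k_BN_AT^*$ in front of $E$ and the auxiliary identity relating $\|\nabla\sqrt{c_i^\eta}\|_{L^2}^2$ to the two dissipation functionals in \eqref{est:5}), and the second is closed by Gr\"onwall. Without this mechanism the energy inequality does not close, so your Step 2 as written fails.

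Two secondary points. First, your description of the entropy/electrostatic interplay as an exact reproduction ``with opposite sign'' of the term $\sum_i\int c_i^\eta|\nabla\Psi^\eta|^2/T^\eta$ is not what happens when $T$ is variable: the paper instead bounds the cross term in $dE^\eta/dt$ by Cauchy--Schwarz using $T^\eta\ge T^*$ and dominates it by half of each dissipation, which is exactly where the constant-temperature structure breaks and the weighting becomes essential. Second, your compactness route (bounding $\partial_t c_i^\eta$ directly and extracting strong $L^2_tL^{3/2}_x$ convergence of $c_i^\eta$, then strong $L^2_tL^3_x$ convergence of $\nabla\Psi^\eta$ by elliptic regularity, then dominated convergence in $1/T^\eta$) differs from the paper, which instead bounds $\partial_t\sqrt{c_i^\eta+1}$ in $L^1(0,\mathcal T;H^{-2})$ --- the natural choice since all uniform bounds live on $\sqrt{c_i^\eta}$ --- and handles the electromigration limit by the decomposition $B_1+B_2+B_3$ with Gagliardo--Nirenberg and the $L^{3+\delta}$ control of $T^\eta$. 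Your dominated-convergence argument for the $1/T^\eta$ factor is plausibly simpler if the strong convergences you assert can actually be established, but the Aubin--Lions step for $c_i^\eta$ itself only yields strong convergence in $L^p_tL^{3/2}_x$ for $p<2$ given the available time-derivative bounds, so the claimed $L^2_tL^{3/2}_x$ strong convergence would need justification or weakening.
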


\begin{rem}
    The $L^{3+\delta}$ requirement for $T_0$ is due to the cubic nonlinearity in the electromigration term.
\end{rem}

The next theorem is about the long-time dynamics for weak solutions to system \eqref{sys:npb-full}.
\begin{thm}\label{thm:longtime}
    Suppose that $(c_i, u, T)$ is a weak solution to system \eqref{sys:npb-full} obtained in Theorem \ref{thm:main-torus}. Then the temperature $T(t)$ decays exponentially in time to the initial average $\overline{T_0}=T_r$ in $L^2$.
    Moreover, if the initial averages of the concentrations $\overline{c_i(0)}$ satisfy the size condition
    \begin{equation}
        \max\limits_{i\in\{1,\dots,N\}} \overline{c_i(0)} \leq \frac{Dk_B N_A T^* \nu}{4C\alpha_S^2} = \frac{DR T^* \nu}{4C\alpha_S^2},
    \end{equation}
    where $C$ is some constant depending only on the domain $\TT^3$, then $c_i(t)$ and $u(t)$ decay exponentially in time to $\overline{c_i(0)}$ and $0$ in $L^1$ and $L^2$, respectively, and the entropy $\int_{\TT^3} c_i \log\frac{c_i(t)}{\bar{c}_i} dx$ decays exponentially in time to $0$. In particular, if the salinity effect in the buoyancy is negligible, i.e., $\alpha_S=0$, then the decay holds unconditionally without any size assumptions on $\overline{c_i(0)}$.
\end{thm}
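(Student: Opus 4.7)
The plan is to handle the three exponential decays in a specific order, from easiest to hardest. First, I would dispatch the temperature. Since $u$ is divergence-free with zero spatial mean, the heat equation \eqref{eqn:npb-full-5} preserves the spatial average: $\overline{T(t)} = \overline{T_0} = T_r$. Testing against $T - T_r$ and using $\nabla\cdot u = 0$ to eliminate the transport term yields
\begin{equation}
\frac{1}{2}\frac{d}{dt}\|T-T_r\|_{L^2}^2 + \kappa\|\nabla T\|_{L^2}^2 = 0.
\end{equation}
The Poincar\'e inequality applied to the mean-zero function $T - T_r$ gives $\|\nabla T\|_{L^2}^2 \geq c_P \|T-T_r\|_{L^2}^2$, and Gr\"onwall delivers the claimed exponential decay in $L^2$.

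For the remaining decays I would introduce the Lyapunov functional
\begin{equation}
\Phi(t) := \sum_{i=1}^N \int_{\TT^3} c_i \log\frac{c_i}{\bar c_i}\,dx + \tfrac{1}{2}\|u\|_{L^2}^2 + \tfrac{\varepsilon}{2}\|\nabla\Psi\|_{L^2}^2.
\end{equation}
Testing the concentration equation against $\log(c_i/\bar c_i)$ produces the diffusive dissipation $4D\|\nabla\sqrt{c_i}\|_{L^2}^2$ and a cross term of the form $\tfrac{2Dez_i}{k_B}\int T^{-1}\sqrt{c_i}\,\nabla\Psi\cdot\nabla\sqrt{c_i}\,dx$. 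The electric energy identity \eqref{introe} supplies the dissipation $\tfrac{D}{\varepsilon}\|\rho\|_{L^2}^2$ together with the non-negative electromigration term $\sum_i \tfrac{DN_A z_i^2 e^2}{k_B}\int T^{-1} c_i|\nabla\Psi|^2 dx$ and the forcing $\mathcal{V} = -\int u\cdot\nabla\rho\,\Psi$. The velocity evolution gives $\nu\|\nabla u\|_{L^2}^2$ on the dissipation side together with the buoyancy forcing from \eqref{eqn:npb-full-3} and the electric force $-\int u\cdot\rho\nabla\Psi$. Integration by parts and $\nabla\cdot u = 0$ show that the electric force exactly cancels $\mathcal{V}$. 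A Cauchy--Schwarz/Young splitting with weight $T^{-1}$ absorbs the entropy cross term into half of the electromigration dissipation and a remainder controlled by $(T^*)^{-1}\|\nabla\sqrt{c_i}\|_{L^2}^2$, which is in turn absorbed into $4D\|\nabla\sqrt{c_i}\|_{L^2}^2$ using $T \geq T^*$.

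What remains are the two buoyancy forcings $g\alpha_T\int(T-T_r) u_3\,dx$ and $-g\alpha_S\sum_i M_i\int(c_i - \bar c_i) u_3\,dx$. The temperature term is benign: Young's inequality yields $g\alpha_T|\int(T-T_r)u_3| \leq \tfrac{\nu}{8}\|\nabla u\|_{L^2}^2 + C\|T-T_r\|_{L^2}^2$, and the last factor is exponentially small in time by the first step. The salinity term is the principal obstacle. I would write $\int(c_i-\bar c_i)u_3 = \int\sqrt{c_i}\sqrt{c_i}u_3$ and estimate by H\"older, the Sobolev embedding $H^1\hookrightarrow L^6$, and interpolation between the conserved $\|\sqrt{c_i}\|_{L^2}^2 = \bar c_i$ and $\|\nabla\sqrt{c_i}\|_{L^2}$, producing a bound of shape $C\bar c_i^{1/2}\|\nabla\sqrt{c_i}\|_{L^2}\|\nabla u\|_{L^2}$. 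Young's inequality then yields $g\alpha_S|\cdots| \leq \tfrac{\nu}{8}\|\nabla u\|_{L^2}^2 + \tfrac{C\alpha_S^2 \bar c_i}{\nu}\|\nabla\sqrt{c_i}\|_{L^2}^2$, and the stated size condition $\max_i \bar c_i \leq DRT^*\nu/(4C\alpha_S^2)$ is precisely what allows this to be absorbed into a fraction of the entropy dissipation.

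Collecting these estimates yields $\tfrac{d}{dt}\Phi + c_1\mathcal{D} \leq c_2 e^{-\mu t}$, where the leftover dissipation $\mathcal{D}$ controls $\|\nabla\sqrt{c_i}\|_{L^2}^2$, $\|\nabla u\|_{L^2}^2$, and $\|\rho\|_{L^2}^2$. The logarithmic Sobolev inequality $\int c_i\log(c_i/\bar c_i)\,dx \leq C\|\nabla\sqrt{c_i}\|_{L^2}^2$, Poincar\'e on $u$, and the elliptic bound $\|\nabla\Psi\|_{L^2}^2 \leq C\varepsilon^{-2}\|\rho\|_{L^2}^2$ combine to give $\mathcal{D}\geq c_3\Phi$, so that $\tfrac{d}{dt}\Phi + c_4\Phi \leq c_2 e^{-\mu t}$ and Gr\"onwall produces exponential decay of $\Phi$. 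The $L^1$ decay of $c_i - \bar c_i$ then follows from the Csiszar--Kullback--Pinsker inequality $\|c_i - \bar c_i\|_{L^1}^2 \leq 2\bar c_i\int c_i \log(c_i/\bar c_i)$. Because the weak solutions only enjoy the regularity in \eqref{regularity-thm}, the formal computations above have to be carried out at the level of the mollified approximations of Section \ref{sec:reg}, where everything is smooth, and transferred to the weak solution through the lower-semicontinuity machinery of Proposition \ref{entropyconvergence}. The main obstacle is the salinity estimate: obtaining the threshold with exactly the factor $\alpha_S^{-2}$ requires the interpolation to convert the $L^1$ conservation of $c_i$ into the square-power of $\|\nabla\sqrt{c_i}\|_{L^2}$ that matches the entropy dissipation; any wastage would weaken the size condition. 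When $\alpha_S = 0$ the salinity term disappears outright, so no size restriction is required.
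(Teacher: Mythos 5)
Your strategy coincides with the paper's: exponential decay of $T$ from the heat equation via Poincar\'e and Gr\"onwall; a Lyapunov functional built from the relative entropy, the kinetic energy and the electric energy, closed by the cancellation of $-\int u\cdot\nabla\rho\,\Psi$ against the electric force, with the thermal buoyancy absorbed using the already-established decay of $\|T-T_r\|_{L^2}$ and the salinity buoyancy absorbed into the entropy dissipation under the stated size condition; then the logarithmic Sobolev, Poincar\'e, elliptic and Csiszar--Kullback--Pinsker inequalities, all carried out on the mollified system and transferred to the weak solution through Corollary \ref{cor-limit} and Proposition \ref{entropyconvergence}. This is the paper's proof.

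One inequality chain, however, does not close as you wrote it. For the salinity term you propose H\"older in the form $\|\sqrt{c_i}\|_{L^2}\|\sqrt{c_i}\|_{L^3}\|u_3\|_{L^6}$ followed by interpolation of $\|\sqrt{c_i}\|_{L^3}$ between the conserved $\|\sqrt{c_i}\|_{L^2}$ and $\|\nabla\sqrt{c_i}\|_{L^2}$. But Gagliardo--Nirenberg gives $\|\sqrt{c_i}\|_{L^3}\le C\|\sqrt{c_i}\|_{L^2}^{1/2}\|\sqrt{c_i}\|_{H^1}^{1/2}$, and $\|\sqrt{c_i}\|_{H^1}$ is bounded \emph{below} by the conserved quantity $\|\sqrt{c_i}\|_{L^2}=\bar c_i^{1/2}$; it does not reduce to the gradient part. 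After Young's inequality this leaves a constant, non-decaying remainder of order $\bar c_i^2$ on the right-hand side, which ruins the Gr\"onwall step (this is exactly why the term $C\overline{c_i(0)}^2\|u\|_{L^2}^2$ appearing in \eqref{est:6} has to be revisited for the long-time analysis). The bound of shape $C\bar c_i^{1/2}\|\nabla\sqrt{c_i}\|_{L^2}\|\nabla u\|_{L^2}$ that you claim is the correct target, but to reach it you must exploit the mean-zero structure through the Sobolev--Poincar\'e embedding $W^{1,1}\hookrightarrow L^{3/2}$, namely $\|c_i-\bar c_i\|_{L^{3/2}}\le C\|\nabla c_i\|_{L^1}\le 2C\|\sqrt{c_i}\|_{L^2}\|\nabla\sqrt{c_i}\|_{L^2}$ paired with $\|u\|_{L^3}\le C\|u\|_{L^2}^{1/2}\|\nabla u\|_{L^2}^{1/2}$; this is precisely \eqref{correctedsal}. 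A second, smaller point: with unit coefficient on the entropy in your $\Phi$, the Young parameter needed to fit the electromigration part of the entropy cross term under the dissipation supplied by the electric energy identity is compatible with absorbing the leftover $(T^*)^{-1}\|\nabla\sqrt{c_i}\|_{L^2}^2$ into $4D\|\nabla\sqrt{c_i}\|_{L^2}^2$ only if $T^*$ exceeds a fixed threshold of order $1/(k_BN_A)$, an extra hypothesis the theorem does not make; the paper removes it by weighting the entropy by $2k_BN_AT^*$ in the functional. Both repairs are matters of constants and of one inequality chain, not of your (correct) overall architecture.
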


\section{Regularization Scheme}\label{sec:reg}

\subsection{Mollifiers} 
We consider a family of standard periodic mollifiers $\left\{\phi_{\eta}  \right\}_{\eta \in (0,1)}$ with $\int_{\TT^3} \phi_{\eta}(x) dx = 1$ for all $\eta \in  (0,1)$, and we denote by $\mathcal J_{\eta}$ the convolution mollification operator $\mathcal{J}_{\eta} f = \phi_{\eta} * f = \int_{\TT^3} \phi_\eta(x-y) f(y) dy$. To be more specific, $\{\phi_\eta\}_{\eta \in (0,1)}$ are periodic smooth functions with Fourier series given by  $\phi_\eta = \sum_{k\in \mathbb Z^3}  e^{-\eta |k|^2} e^{-2\pi ik\cdot x}$. It follows that 
$
\widehat{(\mathcal J_\eta f)}_k = \widehat{(\phi_\eta * f)}_k = e^{-\eta |k|^2} \hat{f}_k.
$
Moreover, $\mathcal{J}_{\eta}$ is bounded on all $L^p$ spaces for any $p \in [1,\infty]$ and uniformly in $\eta$, and it holds that 
\begin{align}\label{eqn:comm-of-convolution}
    \langle \mathcal J_\eta f,g \rangle = \int_{\mathbb T^3} \mathcal J_\eta  f(x) g(x) dx 
    = \int_{\mathbb T^3}  f(x) \mathcal J_\eta  g(x) dx = \langle f, \mathcal J_\eta g \rangle ,
\end{align} for any $\eta \in (0,1)$ due to Fubini's theorem. This implies that $\int_{\TT^3} \mathcal J_\eta f dx = \int_{\TT^3} f dx$ for any $\eta \in (0,1)$.

We provide below $L \log L$ estimates, uniform in $\eta$, that will be used in the subsequent subsections:

\beg{prop} \la{fenchel} Let $f \in L^1(\TT^3)$ be a nonnegative function with $\int_{\TT^3} f \log f dx < \infty$. For any $\eta \in (0,1)$, it holds that
\be 
\int_{\TT^3} \mathcal J_{\eta} f \log \mathcal J_{\eta}f dx \le \int_{\TT^3} f \log f dx. 
\ee  
\end{prop}

\begin{proof}
   In view of the self-adjointness of the operators $\mathcal J_{\eta}$, we have 
   \be 
\int_{\TT^3} \mathcal J_{\eta} f \log \mathcal J_{\eta}f dx
= \int_{\TT^3} f \mathcal J_{\eta} \log \mathcal J_{\eta}f dx. 
   \ee An application of Jensen's inequality to the probability measure $\phi_{\eta}dy$ and the concave logarithmic function yields the pointwise estimate 
   \be 
\mathcal J_{\eta} \log \mathcal J_{\eta} f(x) 
= \int_{\TT^3} \phi_{\eta} (x-y) \log \mathcal J_{\eta} f(y) dy 
\le \log \left(\int_{\TT^3} \phi_{\eta} (x-y) \mathcal J_{\eta} f(y) dy\right)
= \log \mathcal J_{\eta} \mathcal{J}_{\eta} f (x),
   \ee from which we infer that 
   \be 
\int_{\TT^3} \mathcal J_{\eta} f \log \mathcal J_{\eta}f dx = \int_{\TT^3} f\mathcal J_{\eta} \log \mathcal J_{\eta}f dx
\le \int_{\TT^3} f\log \mathcal J_{\eta} \mathcal{J}_{\eta} f dx.
   \ee 
Since the Legendre transform of $p \log p - p + 1$ is $e^q - 1$ for $p \ge 0$, it follows from Fenchel's inequality that $pq \le p \log p - p  + e^q$ for any $p \ge 0$. By taking $p = f$ and $q = \log \mathcal J_{\eta} \mathcal J_{\eta} f$, we obtain the bound
\be 
\int_{\TT^3} \mathcal J_{\eta} f \log \mathcal J_{\eta}f dx
\le \int_{\TT^3} \left(f \log f - f + \mathcal{J}_{\eta} \mathcal{J}_{\eta} f \right) dx
= \int_{\TT^3} f \log f dx.
\ee 
\end{proof}

\subsection{The mollified NPB system.}
In order to study system \eqref{sys:npb-full}, we consider the following mollified version:
\noeqref{eqn:npb-full-mo-1}
\noeqref{eqn:npb-full-mo-2}
\noeqref{eqn:npb-full-mo-3}
\noeqref{eqn:npb-full-mo-4}
\noeqref{eqn:npb-full-mo-5}
\begin{subequations}\label{sys:npb-full-mo}
    \begin{align}
        &\partial_t c_i^\eta + \mathcal J_\eta  u^\eta\cdot \nabla c_i^\eta  - D\Delta c_i^\eta - D\frac{e}{k_B}\nabla\cdot (z_i\frac{1}{T^\eta} c_i^\eta \nabla \mathcal J_\eta \mathcal J_\eta \Psi^\eta)= 0,\label{eqn:npb-full-mo-1}
        \\
        &-\varepsilon\Delta \Psi^\eta = \rho^\eta = \sum\limits_{i=1}^N e N_A z_i c_i^\eta, \label{eqn:npb-full-mo-2}
        \\
        &\partial_t u^\eta + \mathcal J_\eta u^\eta\cdot \nabla u^\eta - \nu\Delta u^\eta + \nabla p^\eta 
        \\
         &\hspace{2cm}= g\left(\alpha_T(T^\eta-T^\eta_r) - \alpha_S\left(\sum\limits_{i=1}^N c^\eta_i M_i-S^\eta_r\right)\right) \vec{k} -\mathcal J_\eta  \left( \rho^\eta \nabla \mathcal J_\eta   \mathcal J_\eta   \Psi^\eta\right), \label{eqn:npb-full-mo-3}
        \\
        &\nabla\cdot u^\eta = 0,\label{eqn:npb-full-mo-4} 
        \\
        &\partial_t T^\eta +\mathcal J_\eta   u^\eta\cdot \nabla T^\eta - \kappa\Delta T^\eta = 0,\label{eqn:npb-full-mo-5}
    \end{align}
\end{subequations}
with initial conditions $(c_i^\eta(0),u^\eta_0,T^\eta_0)=(\mathcal J_\eta c_i(0),\mathcal J_\eta u_0,\mathcal J_\eta T_0)$.
Note that they are all smooth (in $C^\infty(\mathbb T^3)$) provided that $c_i(0), u_0, T_0$ are $L^1$ integrable. Here $T_r^\eta = \overline{T^\eta}$ and $S_r^\eta = \sum\limits_{i=1}^N \overline{c_i^\eta} M_i$.
According to the property of the mollifier, the conditions $c_i(0)\geq 0$, $T_0\geq T^*>0$, and $\overline{u_0}=0$ imply that $c_i^\eta(0)\geq 0$, $T^\eta_0\geq T^*>0$, and $\overline{u^\eta_0}=0$. Also, $\overline{T^\eta} = \overline{T^\eta_0} = \overline{T_0}$ and $\overline{c_i^\eta} = \overline{c_i^\eta(0)}= \overline{c_i(0)}$ are invariant in time. Based on these  observations, together with property \eqref{eqn:comm-of-convolution}, we have $\overline{u^\eta}=\overline{u^\eta_0}=0$, and thus $u^\eta \in H$. The goal of this section is to study the global well-posedness of system \eqref{sys:npb-full-mo} and derive the uniform-in-$\eta$ estimates for this system.

We start by addressing the global well-posedness of solutions to the mollified system \eqref{sys:npb-full-mo}.

\begin{prop}\label{thm:mollified-sys}
Suppose that $c_i(0)$ is nonnegative and satisfying the compatibility condition \eqref{eqn:compa}, and $T_0$ is bounded below by $T_0\geq T^* >0$. Assume that $\nabla\Psi_0\in L^2$,
    $c_i(0)\in L^1$, $c_i(0)\log c_i(0)\in L^1$, $u_0\in H$, and $T_0\in L^2$.
    For each $\eta>0$, consider system \eqref{sys:npb-full-mo} with initial conditions $(c_i^\eta(0),u^\eta_0,T^\eta_0)=(\mathcal J_\eta c_i(0),\mathcal J_\eta u_0,\mathcal J_\eta T_0)$. Then for any $\mathcal T>0$, there exists a unique strong solution $(c_i^\eta,u^\eta,T^\eta)$ to system \eqref{sys:npb-full-mo} on $[0,\mathcal T]$ such that
    \begin{align}
        c_i^\eta, T^\eta \in C([0,\mathcal T];C^\infty(\mathbb T^3)), \quad \text{and} \quad u^\eta \in C([0,\mathcal T];C^\infty(\mathbb T^3)\cap H).
    \end{align}
Moreover, $c_i^{\eta}(t) \ge 0$ and $T^{\eta}(t) \ge T^*$ for any $t \in [0,\mathcal{T}]$.
\end{prop}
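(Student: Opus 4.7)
The plan is to combine a Picard-type fixed-point argument (after smoothing away the potential $1/T^\eta$ singularity) with a maximum-principle argument for the pointwise lower bounds, and then to close the loop and bootstrap to full smoothness.

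For local existence, the key observation is that $\mathcal J_\eta:L^p(\TT^3)\to C^\infty(\TT^3)$ is bounded for every $p\in[1,\infty]$, with norm depending on $\eta$, so every factor that appears under a $\mathcal J_\eta$ is a smooth and locally Lipschitz function of the unmollified unknowns. The only potentially singular ingredient is $1/T^\eta$, and I would handle it by fixing a smooth bounded Lipschitz function $\chi:\R\to\R$ with $\chi(z)=1/z$ for $z\ge T^*/2$ and $\|\chi\|_{L^\infty}\le 2/T^*$, replacing $1/T^\eta$ by $\chi(T^\eta)$ in \eqref{eqn:npb-full-mo-1}, and recasting the resulting regularized system as an abstract evolution equation
\begin{equation}
    \partial_t Y^\eta = \mathcal L Y^\eta + F_\eta(Y^\eta),
\end{equation}
where $Y^\eta=(c_1^\eta,\dots,c_N^\eta,u^\eta,T^\eta)$ with initial data as in \eqref{sys:npb-full-mo}, $\mathcal L$ is the diagonal generator of the heat and Stokes semigroups acting on $(L^2)^N\times H\times L^2$, and $F_\eta$ is locally Lipschitz with constants depending on $\eta$. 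Duhamel's formula and the Banach fixed-point theorem then yield a unique mild solution on some maximal interval $[0,\tau_\eta)$.

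Next, I would establish the pointwise bounds $T^\eta(t)\ge T^*$ and $c_i^\eta(t)\ge 0$. Equation \eqref{eqn:npb-full-mo-5} is a linear advection-diffusion with the smooth divergence-free mean-zero drift $\mathcal J_\eta u^\eta$, so the parabolic minimum principle (equivalently, testing against $-(T^\eta-T^*)_-$ and invoking Gronwall) gives $T^\eta\ge T^*$ on $[0,\tau_\eta)$. For $c_i^\eta$, following the argument in \cite{constantin2019nernst}, I would view \eqref{eqn:npb-full-mo-1} as a linear advection-diffusion-reaction equation for $c_i^\eta$ with the smooth effective drift $\mathcal J_\eta u^\eta-(Dez_i/k_B)\chi(T^\eta)\nabla\mathcal J_\eta\mathcal J_\eta\Psi^\eta$ and a bounded zeroth-order term, test against $-(c_i^\eta)_-$, integrate by parts, and apply Gronwall to conclude $(c_i^\eta)_-\equiv 0$. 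With $T^\eta\ge T^*$ now in hand, the identity $\chi(T^\eta)=1/T^\eta$ holds pointwise, so the regularized system coincides with \eqref{sys:npb-full-mo}.

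Global extension then follows from standard (though $\eta$-dependent) energy estimates: testing each equation against itself and using divergence-free transport, the boundedness of $\chi$, and the smoothness of the mollified factors produces an a priori bound on $\|Y^\eta(t)\|_{(L^2)^N\times H\times L^2}$ on any finite interval $[0,\mathcal T]$, ruling out finite-time blow-up. The $C^\infty$ regularity is then obtained by a standard parabolic bootstrap: because the coefficients $\mathcal J_\eta u^\eta$ and $\nabla\mathcal J_\eta\mathcal J_\eta\Psi^\eta/T^\eta$ are at least as regular as $Y^\eta$ itself, repeated differentiation of the equations together with the smoothing action of the heat and Stokes semigroups upgrade $L^2$ bounds to $H^s$ bounds for every $s\in\N$, and continuity in $t$ is then read off the equations. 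The main obstacle I anticipate is the circular dependence between the bound $T^\eta\ge T^*$ and the factor $1/T^\eta$: the fixed-point setup needs the coefficient to be Lipschitz in $T^\eta$, while the bound itself is only available a posteriori; the cutoff $\chi$ is the standard device that breaks the circularity and is removed after the fact using the minimum principle for $T^\eta$.
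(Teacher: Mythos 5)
Your strategy is sound in outline but takes a genuinely different route from the paper. The paper does not run a contraction on the full nonlinear system; it sets up a linear iteration scheme \eqref{sys:npb-full-mo-iteration} in which the $(n+1)$-st iterate solves \emph{linear} transport--diffusion equations whose coefficients come from the $n$-th iterate. Because the temperature equation at each stage is advected by the already-known smooth field $\mathcal J_\eta u^{(n)}$, the bound $T^{(n+1)}\ge T^*$ is obtained \emph{before} the factor $1/T^{(n+1)}$ is ever needed in the concentration equation, so the circularity you worry about never arises and no cutoff $\chi$ is required. Global existence then comes from uniform-in-$n$ $L^2$ energy bounds (growing in time but finite on $[0,\mathcal T]$), compactness to pass $n\to\infty$, a separate $H^s$ bootstrap for every $s$, and a separate $L^2$ uniqueness estimate on the difference of two solutions. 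Your cutoff-plus-Duhamel scheme is a legitimate alternative and is more compact; the paper's iteration buys linearity of each step (hence trivial solvability and easy propagation of the pointwise bounds) at the cost of an extra limit passage. Your treatment of the pointwise bounds (minimum principle for $T^\eta$, testing against $-(c_i^\eta)_-$ following \cite{constantin2019nernst}) matches the paper's.

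There is, however, one concrete gap in your setup: the fixed point cannot be run in $(L^2)^N\times H\times L^2$ as stated. The nonlinearity $\chi(T^\eta)\,c_i^\eta\,\nabla\mathcal J_\eta\mathcal J_\eta\Psi^\eta$ is not locally Lipschitz there: the difference $\chi(T_1)-\chi(T_2)$ is controlled only in $L^2$ by $\|T_1-T_2\|_{L^2}$, and its product with $c_i\in L^2$ lands merely in $L^1\hookrightarrow H^{-2}(\TT^3)$; after the outer divergence the heat semigroup loses $\sim (t-s)^{-3/2}$, which is not integrable, so Duhamel does not close. The same low regularity of $T^\eta$ obstructs your nonnegativity argument for $c_i^\eta$, which needs the zeroth-order coefficient $\nabla\cdot(\chi(T^\eta)\nabla\mathcal J_\eta\mathcal J_\eta\Psi^\eta)$, hence $\nabla T^\eta$, in $L^\infty$. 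The repair is routine but should be stated: since the mollified data are $C^\infty$, run the contraction in $C([0,\tau];(H^s)^{N+2})$ for some $s\ge 2$, where $H^s(\TT^3)$ is an algebra and $T\mapsto\chi(T)$ is locally Lipschitz; alternatively, adopt the paper's linearized iteration, which sidesteps the issue entirely. With that modification the rest of your argument (global extension via $\eta$-dependent energy estimates and parabolic bootstrap to $C^\infty$) aligns with the paper's Appendix.
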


The proof of Proposition \ref{thm:mollified-sys} will be presented in Appendix \ref{sec:appendix-a}. 

The following proposition addresses the uniform-in-$\eta$ boundedness of solutions.

\begin{prop}\label{prop:uni-est-sol} 
    Suppose that $c_i(0)$ is nonnegative and satisfying the compatibility condition \eqref{eqn:compa}, and $T_0$ is bounded below by $T_0\geq T^* >0$. Assume that $\nabla\Psi_0\in L^2$,
    $c_i(0)\in L^1$, $c_i(0)\log c_i(0)\in L^1$, $u_0\in H$, and $T_0\in L^p$ for some $p\geq 2$. Then the sequence of solutions $(u^\eta, c_i^\eta, T^\eta)$ and the corresponding $\Psi^\eta$ and $\rho^\eta$ satisfy the following uniform-in-$\eta$ bounds:
    \begin{align}
        &\nabla \mathcal J_\eta \Psi^\eta\,\,\text{and} \,\,\sqrt{c_i^\eta}\,\,  \text{are uniformly bounded in} \,\,  L^\infty(0,\mathcal T; L^2)\cap L^2(0,\mathcal T; H^1),
        \\
        &u^\eta \,\,  \text{are uniformly bounded in} \,\,  L^\infty(0,\mathcal T; H)\cap L^2(0,\mathcal T; V),
        \\
        & T^\eta \,\,  \text{are uniformly bounded in} \,\, L^\infty(0,\mathcal T; L^p)\cap L^2(0,\mathcal T; H^1),
        \\
        &\Psi^\eta \,\,  \text{are uniformly bounded in} \,\,  L^2(0,\mathcal T; W^{2,\frac32}) \cap L^\frac43(0,\mathcal T; W^{3,\frac65}), 
        \\
        &c_i^\eta \,\,\text{and} \,\, \rho^\eta  \,\,  \text{are uniformly bounded in} \,\,  L^2(0,\mathcal T; L^{\frac32}) \cap L^\frac43(0,\mathcal T;  W^{1,\frac65}).
    \end{align}
\end{prop}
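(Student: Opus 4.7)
The plan is to derive these bounds from three carefully combined energy identities, exploiting the cross-term cancellation described in the introduction, while the variable temperature is controlled pointwise via $T^\eta\ge T^*$ (preserved by Proposition~\ref{thm:mollified-sys}). First, the preliminary bounds. Multiplying \eqref{eqn:npb-full-mo-5} by $(T^\eta)^{p-1}$ and using $\div(\mathcal J_\eta u^\eta)=0$ to kill the transport term yields $\|T^\eta(t)\|_{L^p}\le \|T_0\|_{L^p}$, and after Poincar\'e applied to $T^\eta-\overline{T_0}$ also $T^\eta\in L^2(0,\mathcal T;H^1)$ uniformly in $\eta$. Integrating \eqref{eqn:npb-full-mo-1} over $\TT^3$ gives mass conservation, so $\sqrt{c_i^\eta}\in L^\infty(0,\mathcal T;L^2)$ uniformly.

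For the main combined estimate, set $\tilde\Psi^\eta:=\mathcal J_\eta\mathcal J_\eta\Psi^\eta$ and carry out three tests: (a) the $\rho^\eta$-equation (obtained by multiplying \eqref{eqn:npb-full-mo-1} by $N_Aez_i$ and summing over $i$) against $\tilde\Psi^\eta$; (b) \eqref{eqn:npb-full-mo-3} against $u^\eta$; (c) each \eqref{eqn:npb-full-mo-1} against $\log c_i^\eta$. Identity (a) is the mollified analogue of \eqref{introe}, producing dissipations $\tfrac D\varepsilon\|\mathcal J_\eta\rho^\eta\|_{L^2}^2$ and $\tfrac{De^2 N_A}{k_B}\sum_i z_i^2\int c_i^\eta|\na\tilde\Psi^\eta|^2/T^\eta\,dx$ and a forcing $-\int \mathcal J_\eta u^\eta\cdot\na\rho^\eta\,\tilde\Psi^\eta\,dx$; the electric forcing $-\int\mathcal J_\eta(\rho^\eta\na\tilde\Psi^\eta)\cdot u^\eta\,dx$ in (b), after self-adjointness of $\mathcal J_\eta$ and an integration by parts using $\div(\mathcal J_\eta u^\eta)=0$, is precisely the opposite, so the two cancel. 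Identity (c) produces the entropy dissipation $4D\|\na\sqrt{c_i^\eta}\|_{L^2}^2$ and the cross term $-D\tfrac{ez_i}{k_B}\int(T^\eta)^{-1}\na c_i^\eta\cdot\na\tilde\Psi^\eta\,dx$; writing $\na c_i^\eta=2\sqrt{c_i^\eta}\,\na\sqrt{c_i^\eta}$, using $T^\eta\ge T^*$, and then Cauchy--Schwarz and Young absorb it into the dissipations of (a) and (c) provided (c) is weighted by a sufficiently small $\lambda>0$. The buoyancy terms are handled similarly: $|\int T^\eta u_3^\eta\,dx|\le \tfrac\nu4\|\na u^\eta\|_{L^2}^2+C\|T^\eta\|_{L^2}^2$ with $\|T^\eta\|_{L^2}$ uniformly bounded by the preliminary step; for $\int c_i^\eta u_3^\eta\,dx$ one interpolates $\|\sqrt{c_i^\eta}\|_{L^{12/5}}\le \|\sqrt{c_i^\eta}\|_{L^2}^{3/4}\|\sqrt{c_i^\eta}\|_{L^6}^{1/4}$ and uses Sobolev and Young to produce a quantity absorbed by $\tfrac\nu4\|\na u^\eta\|_{L^2}^2$, $\epsilon\|\na\sqrt{c_i^\eta}\|_{L^2}^2$ and a constant.

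Collecting everything gives $\tfrac d{dt}\mathcal E^\eta+(\text{coercive dissipation})\le C$ for $\mathcal E^\eta=\tfrac12\|u^\eta\|_{L^2}^2+\tfrac\varepsilon2\|\na\mathcal J_\eta\Psi^\eta\|_{L^2}^2+\lambda\sum_i\int c_i^\eta\log c_i^\eta\,dx$, which is bounded below since $x\log x\ge-e^{-1}$. The initial value $\mathcal E^\eta(0)$ is uniform in $\eta$ thanks to the $L^p$-boundedness of $\mathcal J_\eta$ (noting $\Psi_0^\eta=\mathcal J_\eta\Psi_0$) and the $L\log L$ estimate of Proposition~\ref{fenchel}. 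Gr\"onwall then yields the announced $L^\infty(L^2)$ bounds on $u^\eta$, $\na\mathcal J_\eta\Psi^\eta$, $\sqrt{c_i^\eta}$, and, by time integration of the dissipation, the $L^2(H^1)$ bounds on $u^\eta$ and $\sqrt{c_i^\eta}$; the $L^2(H^1)$ bound on $\na\mathcal J_\eta\Psi^\eta$ follows from elliptic regularity for $-\varepsilon\Delta\mathcal J_\eta\Psi^\eta=\mathcal J_\eta\rho^\eta$ combined with the $L^2(L^2)$ bound on $\mathcal J_\eta\rho^\eta$. From $\sqrt{c_i^\eta}\in L^\infty(L^2)\cap L^2(H^1)\hookrightarrow L^2(L^6)$, squaring and interpolating gives $c_i^\eta\in L^2(L^{3/2})$; combining $\na c_i^\eta=2\sqrt{c_i^\eta}\,\na\sqrt{c_i^\eta}$ with H\"older and interpolation yields $c_i^\eta\in L^{4/3}(W^{1,6/5})$; these bounds transfer to $\rho^\eta$ by linearity, and periodic elliptic regularity applied to $-\varepsilon\Delta\Psi^\eta=\rho^\eta$ produces the announced $\Psi^\eta\in L^2(W^{2,3/2})\cap L^{4/3}(W^{3,6/5})$. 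The main obstacle is the entropy cross-term in (c): it is the pointwise lower bound $T^\eta\ge T^*$ that allows Cauchy--Schwarz to reduce this term to a product of the two existing dissipation quantities, enabling the small-constant absorption that closes the estimate.
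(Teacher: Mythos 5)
Your proposal is correct and follows essentially the same strategy as the paper's proof: the coupled energy--entropy estimate built from testing the $\rho$-equation against $\mathcal J_\eta\mathcal J_\eta\Psi^\eta$, the momentum equation against $u^\eta$, and the concentration equations against logarithms, with the velocity/potential cross-term cancellation, absorption of the electromigration cross term using $T^\eta\ge T^*$, Proposition~\ref{fenchel} for uniform control of the initial entropy, and the final interpolation and elliptic bootstrap for the $c_i^\eta$, $\rho^\eta$, $\Psi^\eta$ bounds. The only deviations are harmless and cosmetic: you estimate the salinity term by interpolating $\|\sqrt{c_i^\eta}\|_{L^{12/5}}$ between $L^2$ and $L^6$ (yielding a constant remainder) instead of the paper's route through $W^{1,1}\hookrightarrow L^{3/2}$ (which yields a $\overline{c_i(0)}^2\|u^\eta\|_{L^2}^2$ term handled by Gr\"onwall), and you keep the entropy dissipation in the expanded form $4D\|\nabla\sqrt{c_i^\eta}\|_{L^2}^2$ plus a Cauchy--Schwarz-controlled cross term rather than the paper's completed-square form, both of which lead to the same absorption with the same weight $\lambda\sim k_BN_AT^*$.
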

\begin{proof}
We drop the superscript $\eta$ to simplify notations (unless needed). The proof will be performed in several steps.

\smallskip
\noindent{\bf Step 1. Bounds of $T$.} The $L^2$ and $L^p$ norms of $T$ satisfy 
\begin{equation}
    \|T(t)\|_{L^2}^2 + 2\kappa \int_0^t \|\nabla T(s)\|_{L^2}^2 ds = \|T_0\|_{L^2}^2, \quad \|T(t)\|_{L^p} \leq \|T_0\|_{L^p},
\end{equation}
which imply that
$T^\eta$ are uniformly bounded in $ L^\infty(0,\mathcal T; L^p)\cap L^2(0,\mathcal T; H^1).$
Moreover, as $T_r=\bar{T}$ is invariant in time, we have the following Poincar\'e inequality
\[
\|T-T_r\|_{L^2}^2 \leq C_p \|\nabla T\|_{L^2}^2.
\]
Taking the $L^2$ inner product of the equation obeyed by $T$ with $T-T_r$ and using the Poincar\'e inequality above, we obtain
\[
\frac{1}{2} \frac{d}{d t}\|T-T_r\|_{L^2}^2+ \frac{\kappa}{C_p}\|T-T_r\|_{L^2}^2 
\le0,
\]
yielding the uniform bound 
\begin{equation}\label{est:T}
    \|T(t)-T_r\|_{L^2}^2 \leq \|T_0-T_r\|_{L^2}^2 e^{-\frac{2\kappa}{C_p} t}.
\end{equation}

\noindent{\bf Step 2. Bounds of $\|\nabla \mathcal J_\eta \Psi\|_{L^2}$ and $\|u\|_{L^2}$.}
The ionic concentrations evolve according to the equations
$$
\partial_t\left(eN_Az_i c_i\right)+\mathcal J_\eta  u \cdot \nabla\left(eN_Az_i c_i\right)-D \Delta\left(eN_Az_i c_i\right)=D\frac{e^2}{k_B} \nabla \cdot\left(N_Az_i^2 c_i \frac1T\nabla \mathcal J_\eta \mathcal J_\eta \Psi\right),
$$
for all $i \in\{1, \ldots, N\}$. Summing over all indices $i \in \left\{1, \dots, N \right\}$, we deduce that the charge density $\rho$ evolves according to
$$
\partial_t \rho+ \mathcal J_\eta  u \cdot \nabla \rho-D \Delta \rho=D\frac{e^2}{k_B} \sum_{i=1}^N \nabla \cdot\left(N_Az_i^2 c_i \frac1T\nabla \mathcal J_\eta \mathcal J_\eta \Psi\right).
$$ Here the diffusion term $-\Delta \rho$ is obtained based on the fact that all ionic species have equal diffusivities.  
Taking the $L^2$ inner product of this latter equation with $\mathcal J_\eta  \mathcal J_\eta  \Psi$ and  integrating by parts, we have
$$
\frac{\varepsilon}{2} \frac{d}{d t}\left\|\nabla\mathcal J_\eta \Psi\right\|_{L^2}^2+ \frac{D}{\varepsilon}\|\mathcal J_\eta \rho\|_{L^2}^2=-\int_{\mathbb{T}^3}(\mathcal J_\eta  u \cdot \nabla \rho) \mathcal J_\eta \mathcal J_\eta \Psi dx -D\frac{e^2N_A}{k_B} \int_{\mathbb{T}^3} \sum_{i=1}^N z_i^2 c_i \frac1T |\nabla \mathcal J_\eta \mathcal J_\eta \Psi|^2 dx.
$$ 

The $L^2$ norm of the velocity $u$ satisfies
$$
\frac{1}{2} \frac{d}{d t}\|u\|_{L^2}^2+\nu\|\nabla u\|_{L^2}^2=-\int_{\mathbb{T}^3} \rho \nabla\mathcal J_\eta \mathcal J_\eta \Psi \cdot \mathcal J_\eta   u + \int_{\mathbb{T}^3} \left(g\alpha_T (T-T_r) u_3 -g\alpha_S(\sum\limits_{i=1}^N c_i M_i - S_r) u_3  \right)dx,
$$
where we have used \eqref{eqn:comm-of-convolution}.
Adding these evolution equations, integrating by parts, and using the divergence-free condition obeyed by the velocity vector field, we obtain the cancellation law
$$
\int_{\mathbb{T}^3}(\mathcal J_\eta  u \cdot \nabla \rho) \mathcal J_\eta \mathcal J_\eta \Psi+\int_{\mathbb{T}^3} \rho \nabla \mathcal J_\eta \mathcal J_\eta \Psi \cdot \mathcal J_\eta  u=0 .
$$
The nonnegativity of the ionic concentrations and the positivity of the temperature imply that
\begin{equation}\label{dissipation-1}
    DN_A \frac{e^2}{k_B}\sum_{i=1}^N\int_{\mathbb{T}^3}  z_i^2 c_i T^{-1}|\nabla \mathcal J_\eta \mathcal J_\eta \Psi|^2 = DN_A \frac{e^2}{k_B}\sum_{i=1}^N \|z_i \sqrt{c_i} T^{-\frac12} {\na \mathcal J_\eta \mathcal J_\eta \Psi }\|_{L^2}^2 \geq 0.
\end{equation}
By the Cauchy-Schwarz inequality, the Poincar\'e inequality, and the fact that $\bar{u}=0$, we obtain
\[
\int_{\mathbb{T}^3} g\alpha_T (T-T_r) u_3  dx \leq 
\frac\nu4\|\nabla u\|_{L^2}^2 + C\|T-T_r\|_{L^2}^2.
\]
For the salinity term $S$, by applying the Poincar\'e inequality, the interpolation inequality, and the Sobolev embedding $W^{1,1}\hookrightarrow L^{\frac32}$, we obtain
\begin{equation}\label{est:salinity}
    \begin{split}
        \left|\int_{\mathbb T^3} g\alpha_S(\sum\limits_{i=1}^N c_i M_i - S_r) u_3 dx\right| 
    \leq &C \sum\limits_{i=1}^N \|c_i-\bar{c}_i\|_{L^\frac32} \|u\|_{L^3} \leq C\sum\limits_{i=1}^N \|c_i-\bar{c}_i\|_{W^{1,1}} \|u\|_{L^2}^{\frac12}\|\nabla u\|_{L^2}^{\frac12} 
    \\
    \leq & C\sum\limits_{i=1}^N \|\nabla c_i\|_{L^1} \|u\|_{L^2}^{\frac12}\|\nabla u\|_{L^2}^{\frac12}  \leq C \|\sqrt{c_i}\|_{L^2} \|\nabla \sqrt{c_i}\|_{L^2} \|u\|_{L^2}^{\frac12}\|\nabla u\|_{L^2}^{\frac12} 
    \\
    \leq &C\sum\limits_{i=1}^N \|c_i\|_{L^1}^{\frac12} \|\nabla \sqrt{c_i}\|_{L^2} \|u\|_{L^2}^{\frac12}\|\nabla u\|_{L^2}^{\frac12}  
    \\
    \leq &\frac\nu4\|\nabla u\|^2_{L^2} + C\sum\limits_{i=1}^N\|c_i\|^{\frac23}_{L^1} \|\nabla \sqrt{c_i}\|^{\frac43}_{L^2} \| u\|^{\frac23}_{L^2}, 
    \end{split}
\end{equation}
 where we have used $\nabla c_i= 2(\nabla\sqrt{c_i}) \sqrt{c_i}$. Combining the estimates above, we obtain that
\begin{equation}\label{est:1}
    \begin{split}
        &\frac{d}{dt}({\varepsilon}\left\|\nabla\mathcal J_\eta \Psi\right\|_{L^2}^2 +  \|u\|_{L^2}^2) + \frac{2D}{\varepsilon} \|\mathcal J_\eta \rho\|_{L^2}^2 + \nu \|\na u\|_{L^2}^2 + \frac{2DN_Ae^2}{k_B}\sum_{i=1}^N \|z_i \sqrt{c_i} T^{-\frac12} {\na \mathcal J_\eta \mathcal J_\eta \Psi }\|_{L^2}^2 
    \\
    \leq & C\|T-T_r\|_{L^2}^2 + C\sum\limits_{i=1}^N\|c_i\|^{\frac23}_{L^1} \|\nabla \sqrt{c_i}\|^{\frac43}_{L^2} \| u\|^{\frac23}_{L^2}.
    \end{split}
\end{equation}

In order to control the second term on the right-hand side of \eqref{est:1}, we consider the energies 
\be 
E_i(t)= \int_{\TT^3} c_i(t) \log \fr{c_i(t)}{\bar{c}_i} dx =\int_{\TT^3} \left(c_i(t) \log \fr{c_i(t)}{\bar{c}_i} - c_i(t) + \bar{c}_i\right) dx, \quad E(t)=\sum\limits_{i=1}^N E_i(t),
\ee 
and study their evolutions.

\smallskip
\noindent{\bf Step 3. Bounds of $E$.}
In view of the algebraic inequality $x\log x - x+1\geq 0$ that holds for any $x\geq 0$, one has that $\bar{c}_i\left(\frac{c_i}{\bar{c}_i} \log \fr{c_i(t)}{\bar{c}_i} - \frac{c_i}{\bar{c}_i} + 1 \right)\geq 0$ and thus $E\geq 0$.
Differentiating $E_i(t)$ in time, we have 
\be 
\int_{\TT^3} (\pa_t c_i) \log \left(\fr{c_i}{\bar{c}_i}\right) dx
= \int_{\TT^3} \pa_t \left(c_i \log \fr{c_i}{\bar{c}_i} - c_i + \bar{c}_i\right) dx
= \fr{d}{dt} \int_{\TT^2} \left(c_i \log \fr{c_i}{\bar{c}_i} - c_i + \bar{c}_i\right) dx,
\ee which, together with the nonlinearity cancellation law
\be 
\int_{\TT^3} (\mathcal J_\eta  u \cdot \na c_i) \log \frac{c_i}{\bar{c}_i} dx
= - \int_{\TT^3} (\mathcal J_\eta  u \cdot \na 
\log c_i) c_i dx
= - \int_{\TT^3} \mathcal J_\eta  u \cdot \na c_i dx
= 0,
\ee gives rise to the energy equality
\begin{align*}
    \fr{d}{dt} E_i 
&= D \int_{\TT^3} \na \cdot \left(\na c_i + \frac{e}{k_B}z_ic_i T^{-1} \na \mathcal J_\eta \mathcal J_\eta \Psi \right) \log \frac{c_i}{\bar{c}_i} dx
\\
&= - D \int_{\TT^3}  \left(\na c_i + \frac{e}{k_B}z_ic_i T^{-1} \na \mathcal J_\eta \mathcal J_\eta \Psi \right) \cdot \fr{\na c_i}{c_i} dx 
\\
&= - D \int_{\TT^3} \fr{1}{c_i} \left(\na c_i + \frac{e}{k_B}z_ic_i T^{-1} \na \mathcal J_\eta \mathcal J_\eta \Psi \right) \cdot \left(\na c_i + \frac{e}{k_B}z_ic_i T^{-1} \na \mathcal J_\eta \mathcal J_\eta \Psi \right) dx
\\
&\hspace{0.3in}+ D \int_{\TT^3} \fr{1}{c_i}\left(\na c_i + \frac{e}{k_B}z_ic_i T^{-1} \na \mathcal J_\eta \mathcal J_\eta \Psi \right) \cdot \left( \frac{e}{k_B}z_ic_i T^{-1} \na \mathcal J_\eta \mathcal J_\eta \Psi \right) dx
\\&= - D \left\|\fr{\na c_i +\frac{e}{k_B}z_ic_i T^{-1} \na \mathcal J_\eta \mathcal J_\eta \Psi}{\sqrt{c_i}} \right\|_{L^2}^2
\\
&\hspace{0.3in}+ D\int_{\TT^3} \left(\na c_i + \frac{e}{k_B}z_ic_i T^{-1} \na \mathcal J_\eta \mathcal J_\eta \Psi \right) \cdot \left( \frac{e}{k_B} z_iT^{-1} \na \mathcal J_\eta \mathcal J_\eta \Psi \right) dx.
\end{align*} 
Summing over all indices $i \in \left\{1, \dots, N \right\},$ we infer that 
\begin{align*}
    &\frac d{dt} E + D\sum\limits_{i=1}^N \left\|\fr{\na c_i +\frac{e}{k_B}z_ic_i T^{-1} \na \mathcal J_\eta \mathcal J_\eta \Psi}{\sqrt{c_i}} \right\|_{L^2}^2
    \\
    =& D \sum\limits_{i=1}^N\int_{\TT^3} \left(\na c_i + \frac{e}{k_B}z_ic_i T^{-1} \na \mathcal J_\eta \mathcal J_\eta \Psi \right) \cdot \left(\frac{e}{k_B} z_iT^{-1} \na \mathcal J_\eta \mathcal J_\eta \Psi \right) dx
    \\
    =& D \sum\limits_{i=1}^N\int_{\TT^3} \frac{\na c_i + \frac{e}{k_B}z_ic_i T^{-1} \na \mathcal J_\eta \mathcal J_\eta \Psi}{\sqrt{c_i}} \cdot \left( \frac{e}{k_B} z_i\sqrt{c_i}T^{-\frac12} \na \mathcal J_\eta \mathcal J_\eta \Psi \right) T^{-\frac12} dx
    \\
    \leq &\frac{D}{2} \sum\limits_{i=1}^N \left\|\fr{\na c_i +\frac{e}{k_B}z_ic_i T^{-1} \na \mathcal J_\eta \mathcal J_\eta \Psi}{\sqrt{c_i}} \right\|_{L^2}^2 + \frac{De^2}{2k_B^2} \|T^{-\frac12}\|_{L^\infty}^2 \sum_{i=1}^N  \left\|z_i \sqrt{c_i} T^{-\frac12} {\na \mathcal J_\eta \mathcal J_\eta \Psi }\right\|_{L^2}^2.
\end{align*}
Since $\|T^{-\frac12}\|_{L^\infty}\leq \frac1{\sqrt{T^*}}$, the above yields 
\begin{align}\label{est:2}
   \frac d{dt}  E + \frac{D}{2} \sum\limits_{i=1}^N \left\|\fr{\na c_i +\frac{e}{k_B}z_ic_i T^{-1} \na \mathcal J_\eta \mathcal J_\eta \Psi}{\sqrt{c_i}} \right\|_{L^2}^2 
   \leq \frac{De^2}{2k_B^2 T^*}\sum_{i=1}^N \|z_i \sqrt{c_i} T^{-\frac12} {\na \mathcal J_\eta \mathcal J_\eta \Psi }\|_{L^2}^2.
\end{align} 

\noindent
{\bf Step 4. Combining estimates.}
Observe that the dissipation in \eqref{est:1} can control the right-hand side of \eqref{est:2}. We multiply \eqref{est:2} by $2k_B N_A T^*$ and add it to the energy inequality \eqref{est:1} to obtain
\begin{equation}\label{est:5}
    \begin{split}
         &\frac{d}{dt}({\varepsilon}\left\|\nabla\mathcal J_\eta \Psi\right\|_{L^2}^2 +  \|u\|_{L^2}^2 + 2k_BN_A{T^*} E) + \frac{2D}{\varepsilon} \|\mathcal J_\eta \rho\|_{L^2}^2 + \nu \|\na u\|_{L^2}^2 
   \\
   &\hspace{0.2in}+ \frac{De^2}{k_B}N_A\sum_{i=1}^N \|z_i \sqrt{c_i} T^{-\frac12} {\na \mathcal J_\eta \mathcal J_\eta \Psi }\|_{L^2}^2 + DN_A k_B T^* \sum\limits_{i=1}^N \left\|\fr{\na c_i +\frac{e}{k_B}z_ic_i T^{-1} \na \mathcal J_\eta \mathcal J_\eta \Psi}{\sqrt{c_i}} \right\|_{L^2}^2 
    \\
    \leq & C\|T-T_r\|_{L^2}^2 + C\sum\limits_{i=1}^N\|c_i\|^{\frac23}_{L^1} \|\nabla \sqrt{c_i}\|^{\frac43}_{L^2} \| u\|^{\frac23}_{L^2}. 
    \end{split}
\end{equation}
In order to control $\|\nabla\sqrt{c_i}\|_{L^2}$, we observe that 
\begin{align*}
    \|2\nabla \sqrt{c_i}\|_{L^2}^2 &=  \|2\nabla \sqrt{c_i} + \frac{e}{k_B}z_i \sqrt{c_i} T^{-1}\na \mathcal J_\eta \mathcal J_\eta \Psi  - \frac{e}{k_B}z_i \sqrt{c_i} T^{-1}\na \mathcal J_\eta \mathcal J_\eta \Psi\|_{L^2}^2
    \\
    &\leq 2 \|2\nabla \sqrt{c_i} + \frac{e}{k_B} z_i \sqrt{c_i} T^{-1} \na \mathcal J_\eta \mathcal J_\eta \Psi\|_{L^2}^2 + 2 \|T^{-\frac12}\|_{L^\infty}^2 \|\frac{e}{k_B} z_i \sqrt{c_i} T^{-\frac12}\na \mathcal J_\eta \mathcal J_\eta \Psi\|_{L^2}^2
    \\
    &\leq 2 \left\|\frac{\nabla c_i +\frac{e}{k_B} z_i c_i T^{-1} \na \mathcal J_\eta \mathcal J_\eta \Psi}{\sqrt{c_i}}\right\|_{L^2}^2 + 2 \frac{e^2}{k_B^2{T^*}}\left\|z_i \sqrt{c_i} T^{-\frac12}\na \mathcal J_\eta \mathcal J_\eta \Psi\right\|_{L^2}^2,
\end{align*}
where we have used $\|T^{-\frac12}\|_{L^\infty}\leq \frac1{\sqrt{T^*}}$ and $2\nabla\sqrt{c_i} = \frac{\nabla c_i}{\sqrt{c_i}}$. By multiplying by $\frac{Dk_BN_A T^*}4$ on both sides above, applying Young's inequality to obtain $C\|c_i\|^{\frac23}_{L^1} \|\nabla \sqrt{c_i}\|^{\frac43}_{L^2} \| u\|^{\frac23}_{L^2} \leq \frac{DN_Ak_BT^*}2 \|\nabla \sqrt{c_i}\|^2 + C\|c_i\|^{2}_{L^1}\| u\|^{2}_{L^2}$,
we can infer from \eqref{est:5} that
\begin{equation}\label{est:6}
    \begin{split}
         &\frac{d}{dt}({\varepsilon}\left\|\nabla\mathcal J_\eta \Psi\right\|_{L^2}^2 +  \|u\|_{L^2}^2 + 2k_BN_A{T^*} E) + \frac{2D}{\varepsilon} \|\mathcal J_\eta \rho\|_{L^2}^2 + \nu \|\na u\|_{L^2}^2 + \frac{DN_Ak_BT^*}2 \sum\limits_{i=1}^N\|\nabla \sqrt{c_i}\|_{L^2}^2
   \\
   &\hspace{0.2in}+ \frac{De^2}{2k_B}N_A\sum_{i=1}^N \|z_i \sqrt{c_i} T^{-\frac12} {\na \mathcal J_\eta \mathcal J_\eta \Psi }\|_{L^2}^2 + \frac{DN_A k_B T^*}2 \sum\limits_{i=1}^N \left\|\fr{\na c_i +\frac{e}{k_B}z_ic_i T^{-1} \na \mathcal J_\eta \mathcal J_\eta \Psi}{\sqrt{c_i}} \right\|_{L^2}^2 
    \\
    \leq & C\|T-T_r\|_{L^2}^2 + C\|c_i\|^{2}_{L^1}\| u\|^{2}_{L^2} =  C\|T-T_r\|_{L^2}^2 + C\overline{c_i(0)}^2\| u\|^{2}_{L^2}.
    \end{split}
\end{equation}
Using Gr\"onwall's inequality and \eqref{est:T}, and putting the superscript $\eta$ back, we have for any $t\in[0,\mathcal T]$ that
\begin{equation}\label{est:3}
    \begin{split}
        &{\varepsilon}\left\|\nabla\mathcal J_\eta \Psi^\eta(t)\right\|_{L^2}^2 +  \|u^\eta(t)\|_{L^2}^2 + 2k_BN_A{T^*} E^\eta(t) + \int_0^t \left(\frac{2D}{\varepsilon} \|\mathcal J_\eta \rho^\eta(s)\|_{L^2}^2 + \nu \|\na u^\eta(s)\|_{L^2}^2 \right) ds
    \\
    &\hspace{0.2in}+ \int_0^t \frac{D k_B N_A T^*}2 \sum\limits_{i=1}^N\|\nabla\sqrt{c_i^\eta(s)}\|_{L^2}^2 ds + \int_0^t\frac{De^2 N_A}{2k_B}\sum_{i=1}^N \left\|z_i \sqrt{c_i^\eta(s)} T^\eta(s)^{-\frac12} {\na \mathcal J_\eta \mathcal J_\eta \Psi^\eta(s) }\right\|_{L^2}^2 ds 
    \\
    &\hspace{0.2in}+ \int_0^t \frac{D k_B N_A T^*}2 \sum\limits_{i=1}^N \left\|\fr{\na c_i^\eta(s) +\frac{e}{k_B}z_ic_i^\eta(s) T^\eta(s)^{-1} \na \mathcal J_\eta \mathcal J_\eta \Psi^\eta(s)}{\sqrt{c_i^\eta(s)}} \right\|_{L^2}^2  ds
    \leq C, 
    \end{split}
\end{equation}
where $C$ on the right-hand side depends only on time $\mathcal T$ and initial conditions $\|\nabla\Psi_0\|_{L^2}$, $\|T_0-T_r\|_{L^2}$, $\overline{c_i(0)}$, $\|u_0\|_{L^2}$, and $E(0)$, and is independent of $\eta$. Here we have used Proposition \ref{fenchel} to obtain the control 
\begin{align*}
    E^\eta(0) = &\sum\limits_{i=1}^N \int_{\TT^3} c_i^\eta(0) \log\frac{c_i^\eta(0)}{\overline{c_i^\eta(0)}} = \sum\limits_{i=1}^N \int_{\TT^3} c_i^\eta(0) \log c_i^\eta(0) dx - \overline{c_i(0)} \log \overline{c_i(0)}
    \\
    \leq & \sum\limits_{i=1}^N \int_{\TT^3} c_i(0) \log c_i(0) dx - \overline{c_i(0)} \log \overline{c_i(0)} = \sum\limits_{i=1}^N \int_{\TT^3} c_i(0) \log\frac{c_i(0)}{\overline{c_i(0)}} = E(0).
\end{align*}

\noindent{\bf Step 5. Uniform bounds.}
Notice that $\|\sqrt{c_i^\eta}\|_{L^2}^2 = \|c_i^\eta\|_{L^1} = \overline{c_i^\eta} = \overline{c_i^\eta(0)} = \overline{c_i(0)}$ is a constant, \eqref{est:3} yields
\begin{align}\label{bdd-c-1}
    &\sqrt{c_i^\eta} \,\, \text{and}\, \, \nabla \mathcal J_\eta \Psi^\eta\,\,  \text{are uniformly bounded in} \,\,  L^\infty(0,\mathcal T; L^2)\cap L^2(0,\mathcal T; H^1),
    \\
    &u^\eta \,\,  \text{are uniformly bounded in} \,\,  L^\infty(0,\mathcal T; H)\cap L^2(0,\mathcal T; V).
\end{align}
For an arbitrary test function $\phi\in C^\infty$, by subsequent applications of H\"older's, the Sobolev, and the Ladyzhenskaya inequalities, we have the following estimates:
\begin{align}
    \langle c_i^\eta, \phi\rangle &= \int_{\mathbb T^3} \sqrt{c_i^\eta} \sqrt{c_i^\eta} \phi dx \leq \|\sqrt{c_i^\eta}\|_{L^3} \|\sqrt{c_i^\eta}\|_{L^6} \|\phi\|_{L^2} \leq C \|\sqrt{c_i^\eta}\|_{L^2}^{\frac12} \|\sqrt{c_i^\eta}\|_{H^1}^{\frac32} \|\phi\|_{L^2},\\
    \langle  c_i^\eta, \phi\rangle &= \int_{\mathbb T^3} \sqrt{c_i^\eta}  \sqrt{c_i^\eta} \phi dx \leq C\|\sqrt{c_i^\eta}\|_{L^3}^2 \|\phi\|_{L^3} \leq C  \|\sqrt{c_i^\eta}\|_{L^2}\|\sqrt{c_i^\eta}\|_{H^1} \|\phi\|_{L^3}, \\
    \langle \nabla c_i^\eta, \phi\rangle &= \int_{\mathbb T^3} 2\sqrt{c_i^\eta} \nabla \sqrt{c_i^\eta} \phi dx \leq C\|\nabla\sqrt{c_i^\eta}\|_{L^2} \|\sqrt{c_i^\eta}\|_{L^3} \|\phi\|_{L^6} \leq C \|\sqrt{c_i^\eta}\|_{L^2}^{\frac12} \|\sqrt{c_i^\eta}\|_{H^1}^{\frac32} \|\phi\|_{L^6}.
\end{align}
As $W^{1,\frac65}\hookrightarrow L^2$ in 3D, the above together with \eqref{bdd-c-1} yield that 
\begin{align}\label{bdd-c-2}
    c_i^\eta \,\,  \text{are uniformly bounded in} \,\,  L^2(0,\mathcal T; L^{\frac32})\cap L^\frac{4}3(0,\mathcal T; W^{1,\frac65}) .
\end{align}
Thanks to \eqref{eqn:npb-full-mo-2}, one also infers that 
\begin{align}
    &\rho^\eta  \,\,  \text{are uniformly bounded in} \,\,  L^2(0,\mathcal T; L^{\frac32}) \cap L^\frac43(0,\mathcal T; W^{1,\frac65}),
    \\
    &\Psi^\eta \,\,  \text{are uniformly bounded in} \,\,  L^2(0,\mathcal T; W^{2,\frac32}) \cap L^\frac43(0,\mathcal T; W^{3,\frac65}).
\end{align}
\end{proof}

Next, we address the uniform boundedness of the time derivatives of solutions to the mollified system.

\begin{prop}\label{prop:time-derivative} Suppose the same assumption on the initial conditions in Proposition \ref{prop:uni-est-sol} holds. Then the sequence of time derivatives $(\partial_t T^\eta, \partial_t u^\eta, \partial_t \sqrt{c_i^\eta+1})$ satisfy
    \begin{align}
        &\partial_t T^\eta \,\, \text{are uniformly bounded in} \,\, L^{\frac43}(0,\mathcal T; H^{-1}),
        \\
        &\partial_t u^\eta \,\, \text{are uniformly bounded in} \,\, L^{\frac43}(0,\mathcal T; \mathcal D(A)'),
        \\
        &\partial_t \sqrt{c_i^\eta +1} \,\, \text{are uniformly bounded in} \,\, L^1(0,\mathcal T; H^{-2}).
    \end{align}
\end{prop}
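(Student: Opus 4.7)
The plan is to read $\partial_t$ directly off each of \eqref{eqn:npb-full-mo-5}, \eqref{eqn:npb-full-mo-3}, \eqref{eqn:npb-full-mo-1}, pair with a smooth test function in the predual of the target space, transfer derivatives onto the test function via integration by parts, and invoke the uniform-in-$\eta$ estimates from Proposition \ref{prop:uni-est-sol} together with the pointwise lower bound $T^\eta \ge T^*$. The Sobolev embeddings in three dimensions that I rely on throughout are $H^1 \hookrightarrow L^6$ and $H^2 \hookrightarrow L^\infty$, whose duals $L^{6/5} \hookrightarrow H^{-1}$ and $L^1 \hookrightarrow H^{-2}$ supply the ``room'' needed at the endpoints.

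For $\partial_t T^\eta$, testing \eqref{eqn:npb-full-mo-5} against $\phi \in H^1$ and using divergence-freeness of $\mathcal{J}_\eta u^\eta$ to rewrite the transport as $\langle T^\eta \mathcal{J}_\eta u^\eta, \nabla\phi\rangle$, the viscous piece $\kappa\langle \nabla T^\eta, \nabla\phi\rangle$ is immediately in $L^2(0,\mathcal{T}; H^{-1})$ from Step 1 of Proposition \ref{prop:uni-est-sol}; the transport is bounded by $\|T^\eta\|_{L^2}\|u^\eta\|_{L^3}\|\nabla\phi\|_{L^2}$ after observing $L^{6/5}\hookrightarrow H^{-1}$, and interpolating $u^\eta$ between $L^\infty(0,\mathcal{T};L^2)$ and $L^2(0,\mathcal{T};L^6)$ yields $u^\eta \in L^4(0,\mathcal{T};L^3)$, comfortably in $L^{4/3}$ in time. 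An analogous argument handles $\partial_t u^\eta$ by testing \eqref{eqn:npb-full-mo-3} against $\varphi \in \mathcal{D}(A) \hookrightarrow L^\infty$: the viscous, convective, and buoyancy pieces each land in at least $L^2(0,\mathcal{T};\mathcal{D}(A)')$ using the uniform bounds of Proposition \ref{prop:uni-est-sol} and the estimate $\|T^\eta - T_r\|_{L^2} \in L^\infty_t$ from \eqref{est:T}. The bottleneck and source of the $L^{4/3}$ exponent is the Lorentz term $\mathcal{J}_\eta(\rho^\eta \nabla\mathcal{J}_\eta\mathcal{J}_\eta \Psi^\eta)$, which I bound by $\|\rho^\eta\|_{L^{3/2}}\|\nabla\mathcal{J}_\eta\mathcal{J}_\eta\Psi^\eta\|_{L^3}\|\varphi\|_{L^\infty}$: Proposition \ref{prop:uni-est-sol} gives $\rho^\eta \in L^2(0,\mathcal{T};L^{3/2})$, and interpolating $\nabla\mathcal{J}_\eta\Psi^\eta$ between $L^\infty(0,\mathcal{T};L^2)$ and $L^2(0,\mathcal{T};L^6)$ (the latter via $L^2(0,\mathcal{T};H^1)$) places it in $L^4(0,\mathcal{T};L^3)$, so H\"older in time closes the bound precisely in $L^{4/3}(0,\mathcal{T})$.

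The main obstacle is $\partial_t \sqrt{c_i^\eta + 1}$, because the chain rule collides with the cubic electromigration. Setting $w^\eta = \sqrt{c_i^\eta + 1} \ge 1$, so that $\nabla c_i^\eta = 2 w^\eta \nabla w^\eta$ and $\partial_t w^\eta = \partial_t c_i^\eta/(2 w^\eta)$, I would multiply \eqref{eqn:npb-full-mo-1} by $\phi/(2 w^\eta)$ for $\phi \in H^2$ and integrate by parts, systematically exploiting the pointwise inequalities $c_i^\eta/w^\eta \le w^\eta$, $c_i^\eta/(w^\eta)^2 \le 1$, and $(w^\eta)^{-1} \le 1$. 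The transport collapses to $\langle \mathcal{J}_\eta u^\eta \cdot \nabla w^\eta, \phi\rangle$, controlled in $L^2_t$ by $\|\phi\|_{L^\infty}\|u^\eta\|_{L^2}\|\nabla w^\eta\|_{L^2}$; the diffusion produces $-D\langle \nabla w^\eta, \nabla\phi\rangle$ plus a remainder $D\int \phi|\nabla w^\eta|^2/w^\eta\, dx$ dominated pointwise by $\|\phi\|_{L^\infty}\|\nabla w^\eta\|_{L^2}^2$, which is only $L^1$ in time; and the electromigration splits into two analogous pieces, the worst of which is bounded by $(T^*)^{-1}\|w^\eta\|_{L^6}\|\nabla\phi\|_{L^3}\|\nabla\mathcal{J}_\eta\mathcal{J}_\eta\Psi^\eta\|_{L^2}$, which is $L^2_t$. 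Because of the $L^1_t$ diffusion remainder and the embedding $L^1 \hookrightarrow H^{-2}$ in 3D, the final bound is precisely $L^1(0,\mathcal{T}; H^{-2})$. Without the $+1$ regularization and these pointwise quotient inequalities, both the diffusion remainder $|\nabla w^\eta|^2/w^\eta$ and the cubic electromigration ratio would escape control by the existing uniform bounds, and this is where the argument is most delicate and where the choice of $\sqrt{c_i^\eta + 1}$ rather than $\sqrt{c_i^\eta}$ becomes essential for the Aubin--Lions step to follow.
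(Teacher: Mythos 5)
Your overall strategy --- pairing each equation with a test function in the predual of the target space, moving derivatives onto the test function, and invoking Proposition \ref{prop:uni-est-sol} together with $T^\eta\ge T^*$ and the pointwise quotient bounds for $w^\eta=\sqrt{c_i^\eta+1}$ --- is exactly the paper's. Steps 2 and 3 check out: your H\"older split $\|\rho^\eta\|_{L^{3/2}}\|\nabla\mathcal J_\eta\mathcal J_\eta\Psi^\eta\|_{L^3}\|\varphi\|_{L^\infty}$ for the electric forcing term is a legitimate variant of the paper's $\|\rho^\eta\|_{L^2}\|\nabla\mathcal J_\eta\Psi^\eta\|_{L^2}\|\varphi\|_{H^2}$ and lands in $L^{4/3}$ in time as required; and in Step 3 your bound $(T^*)^{-1}\|w^\eta\|_{L^6}\|\nabla\phi\|_{L^3}\|\nabla\mathcal J_\eta\mathcal J_\eta\Psi^\eta\|_{L^2}$, obtained from $c_i^\eta/w^\eta\le w^\eta$, actually places that electromigration piece in $L^2$ in time (better than the paper's $L^1$ for the corresponding term), so the genuine $L^1_t$ bottleneck is, as you correctly identify, the diffusion remainder $D\int\phi\,|\nabla w^\eta|^2/w^\eta\,dx$.

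The one step that fails as written is the temperature transport term. The expression $\|T^\eta\|_{L^2}\|u^\eta\|_{L^3}\|\nabla\phi\|_{L^2}$ is not a valid H\"older bound for $\langle T^\eta\mathcal J_\eta u^\eta,\nabla\phi\rangle$, since the exponents give $\tfrac12+\tfrac13+\tfrac12=\tfrac43>1$; and the embedding $L^{6/5}\hookrightarrow H^{-1}$ does not rescue it, because the transport term is a divergence: to place $\nabla\cdot(T^\eta\mathcal J_\eta u^\eta)$ in $H^{-1}$ you must pair the flux $T^\eta\mathcal J_\eta u^\eta$ against $\nabla\phi\in L^2$, which requires the flux itself to lie in $L^2$, not merely $L^{6/5}$. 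The repair is immediate and does not change the conclusion: either keep the gradient on $T^\eta$ as the paper does and estimate $|\langle\mathcal J_\eta u^\eta\cdot\nabla T^\eta,\phi\rangle|\le C\|u^\eta\|_{L^2}^{1/2}\|\nabla u^\eta\|_{L^2}^{1/2}\|\nabla T^\eta\|_{L^2}\|\phi\|_{H^1}$, or use the split $\|T^\eta\|_{L^3}\|u^\eta\|_{L^6}\|\nabla\phi\|_{L^2}$ together with $T^\eta\in L^4(0,\mathcal T;L^3)$ (by interpolation of $L^\infty_tL^2\cap L^2_tH^1$) and $u^\eta\in L^2(0,\mathcal T;L^6)$; either route puts the term in $L^{4/3}(0,\mathcal T;H^{-1})$ as claimed.
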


\begin{proof}
    {\bf Step 1. Bounds of $\partial_t T^\eta$.} Taking the $L^2$ inner product of equation \eqref{eqn:npb-full-mo-5} with an arbitrary test function $\phi\in L^{4}(0,\mathcal T; H^1)$ yields
    \begin{align}
        \left| \left\langle \partial_t T^\eta, \phi\right\rangle \right| \leq \left| \left\langle \mathcal J_\eta u^\eta\cdot\nabla T^\eta, \phi\right\rangle \right| + \kappa \left| \left\langle \nabla T^\eta, \nabla\phi\right\rangle \right|.
    \end{align}
Thanks to Proposition \ref{prop:uni-est-sol}, it follows that
\begin{align}
        \left| \left\langle \partial_t T^\eta, \phi\right\rangle \right| \leq C\|u^\eta\|_{L^2}^{\frac12} \|\nabla u^\eta\|_{L^2}^{\frac12} \|\nabla T^\eta\|_{L^2} \|\phi\|_{H^1} + C\|\nabla T^\eta\|_{L^2} \|\phi\|_{H^1} < \infty.
    \end{align}
Therefore,
\begin{align}
    \partial_t T^\eta \,\, \text{are uniformly bounded in} \,\, L^{\frac43}(0,\mathcal T; H^{-1}).
\end{align}

\smallskip

\noindent {\bf Step 2. Bounds of $\partial_t u^\eta$.} Considering $\phi\in L^{4}(0,\mathcal T; \mathcal D(A))$, and taking the inner product of equation \eqref{eqn:npb-full-mo-3} with $\phi$, we estimate $|\langle \partial_t u^\eta, \phi\rangle|$. A good control of the nonlinear term $\left| \left\langle \mathcal J_\eta u^\eta\cdot\nabla u^\eta, \phi\right\rangle \right|$ is obtained by following a similar argument to the analogous term in the   temperature evolution, and all the other linear terms are easily manageable. The remaining nonlinear term in $\rho^{\eta}$ can be estimated using H\"older's inequality and the Sobolev inequality followed by interpolation as shown below,
\begin{align}
    \left| \left\langle \mathcal J_\eta (\rho^\eta\nabla \mathcal J_\eta \mathcal J_\eta \Psi^\eta), \phi   \right\rangle\right|\leq C\|\rho^\eta\|_{L^2} \|\nabla \mathcal J_\eta \Psi^\eta\|_{L^2} \|\phi\|_{H^2}.
\end{align}
The latter is uniformly bounded thanks to Proposition \ref{prop:uni-est-sol}. Therefore,
\begin{align}
    \partial_t u^\eta \,\, \text{are uniformly bounded in} \,\, L^{\frac43}(0,\mathcal T; \mathcal D(A)').
\end{align}

\noindent{\bf Step 3. Bounds of $\partial_t c_i^\eta$.} As for the concentrations $c_i^\eta$, we would consider estimates for $\partial_t \sqrt{c_i^\eta+1}$ instead of $\partial_t c^\eta_i$. Indeed, the equation satisfied by $\partial_t \sqrt{c^\eta_i+1}$ is given by 
\begin{align}\label{eqn:sqrt-ci}
    \partial_t \sqrt{c^\eta_i+1} = \frac{\partial_t c_i^\eta}{2\sqrt{c^\eta_i+1}} = \frac{1}{2\sqrt{c^\eta_i+1}} \left(-\mathcal J_\eta  u^\eta\cdot \nabla c_i^\eta  + D\Delta c_i^\eta + D\frac{e}{k_B}\nabla\cdot (z_i\frac{1}{T^\eta} c_i^\eta \nabla \mathcal J_\eta \mathcal J_\eta \Psi^\eta)\right).
\end{align}
The reason for considering $\sqrt{c^\eta_i+1}$ instead of $\sqrt{c^\eta_i}$ is to avoid a vanishing denominator. Note that \eqref{bdd-c-1} implies that
\begin{align}\label{bdd-c-3}
    \sqrt{c_i^\eta+1} \,\,  \text{are uniformly bounded in} \,\,  L^\infty(0,\mathcal T; L^2)\cap L^2(0,\mathcal T; H^1).
\end{align}
We consider a test function $\phi\in L^{\infty}(0,\mathcal T; H^2)$. Taking the inner product of equation \eqref{eqn:sqrt-ci} with $\phi$, integrating by parts, and using H\"older's inequality, the Sobolev inequality, and interpolation inequalities, we estimate each term as follows. First for the diffusion term, one has
\begin{align}
    \left|\int_{\mathbb T^3} \frac{\Delta c_i^\eta}{\sqrt{c^\eta_i+1}} \phi  dx \right| =&  \left|-\int_{\mathbb T^3} \frac{\nabla c_i^\eta}{\sqrt{c^\eta_i+1}} \cdot \nabla\phi  dx  +  \int_{\mathbb T^3}  \frac{\nabla \sqrt{c^\eta_i+1}}{c^\eta_i+1} \cdot \nabla c_i^\eta \phi dx\right|
    \\
    &= \left|-2\int_{\mathbb T^3} \nabla\sqrt{c^\eta_i+1} \cdot \nabla\phi  dx  +  2\int_{\mathbb T^3}  \frac{|\nabla \sqrt{c^\eta_i+1}|^2}{\sqrt{c^\eta_i+1}} \phi dx\right|
    \\
    &\leq C \|\nabla\sqrt{c^\eta_i+1}\|_{L^2} \|\nabla\phi\|_{L^2} + C\|\nabla \sqrt{c^\eta_i+1}\|_{L^2}^2 \|\phi\|_{H^2} \underbrace{\|\frac1{\sqrt{c_i^\eta+1}}\|_{L^\infty}}_{\leq 1}
    \\
    &\leq C \|\nabla\sqrt{c^\eta_i+1}\|_{L^2} \|\phi\|_{H^1} + C\|\nabla \sqrt{c^\eta_i+1}\|_{L^2}^2 \|\phi\|_{H^2},
\end{align}
which together with \eqref{bdd-c-3} yield that $\frac{\Delta c_i^\eta}{\sqrt{c^\eta_i+1}}$ are uniformly bounded in $L^1(0,\mathcal T; H^{-2})$. As for the advection term, we have
\begin{align}
    \left|\int_{\mathbb T^3} \frac{\mathcal J_\eta u^\eta\cdot\nabla c_i^\eta}{2\sqrt{c^\eta_i+1}} \phi  dx \right| =& \left|\int_{\mathbb T^3} \mathcal J_\eta u^\eta\cdot\nabla \sqrt{c_i^\eta+1}\phi  dx \right| = \left|\int_{\mathbb T^3} \mathcal J_\eta u^\eta\cdot\nabla \phi \sqrt{c_i^\eta+1}  dx \right|
    \\
    \leq & \|\mathcal J_\eta u^\eta\|_{L^3} \|\nabla\phi\|_{L^6} \|\sqrt{c_i^\eta+1}\|_{L^2} \leq C \|u^\eta\|_{L^2}^{\frac12} \|u^\eta\|_{H^1}^{\frac12} \|\sqrt{c_i^\eta+1}\|_{L^2} \|\phi\|_{H^2},
\end{align}
implying that $\frac{\mathcal J_\eta u^\eta\cdot\nabla c_i^\eta}{2\sqrt{c^\eta_i+1}}$ are uniformly bounded in $L^4(0,\mathcal T; H^{-2})$. Regarding the electromigration term, it holds that 
\begin{align}
    &\left|\int_{\mathbb T^3} \frac{\nabla\cdot(\frac1{T^\eta}c_i^\eta\nabla \mathcal J_\eta  \mathcal J_\eta  \Psi^\eta)}{\sqrt{c^\eta_i+1}} \phi  dx \right| 
    \\
    \leq & \left|\int_{\mathbb T^3} \frac{\frac1{T^\eta}c_i^\eta\nabla \mathcal J_\eta  \mathcal J_\eta  \Psi^\eta}{\sqrt{c^\eta_i+1}} \cdot \nabla \phi  dx \right|  + \left|\int_{\mathbb T^3} \frac1{T^\eta}c_i^\eta \phi \nabla \mathcal J_\eta  \mathcal J_\eta  \Psi^\eta  \cdot \frac{\nabla \sqrt{c^\eta_i+1}}{c^\eta_i+1}   dx \right|
    \\
    \leq & \|\frac1{T^\eta}\|_{L^\infty} \|c_i^\eta\|_{L^2} \|\nabla \mathcal J_\eta  \mathcal J_\eta  \Psi^\eta\|_{L^3} \|\nabla\phi\|_{L^6} \|\frac1{\sqrt{c^\eta_i+1}}\|_{L^\infty} \\
    &\hspace{1cm}+\|\frac1{T^\eta}\|_{L^\infty} \|\phi\|_{L^\infty} \|\frac{c_i^\eta}{c^\eta_i+1}\|_{L^\infty} \|\nabla \mathcal J_\eta \Psi^\eta\|_{L^2} \|\nabla \sqrt{c^\eta_i+1}\|_{L^2}
    \\
    \leq & C\|c_i^\eta\|_{L^2} \|\nabla \mathcal J_\eta  \Psi^\eta\|_{L^2}^{\frac12} \|\nabla \mathcal J_\eta  \Psi^\eta\|_{H^1}^{\frac12} \|\phi\|_{H^2} + C \|\nabla \mathcal J_\eta  \Psi^\eta\|_{L^2}  \|\nabla \sqrt{c^\eta_i+1}\|_{L^2}\|\phi\|_{H^2}.
\end{align}
By virtue of Proposition \ref{prop:uni-est-sol} and the uniform boundedness \eqref{bdd-c-3}, it follows that $\frac{\nabla\cdot(\frac1{T^\eta}c_i^\eta\nabla \mathcal J_\eta  \mathcal J_\eta  \Psi^\eta)}{\sqrt{c^\eta_i+1}}$ are uniformly bounded in $L^1(0,\mathcal T; H^{-2})$. The three estimates above together imply that 
\begin{align}
    \partial_t \sqrt{c_i^\eta +1} \,\, \text{are uniformly bounded in} \,\, L^1(0,\mathcal T; H^{-2}).
\end{align}
\end{proof}

\begin{cor}\label{cor-limit}
Suppose the same assumption on the initial conditions in Proposition \ref{prop:uni-est-sol} holds. Then 
    there exists a decreasing sequence of numbers $\{\eta_k\}_{k\in\mathbb N}$ with $\lim\limits_{k\to \infty} \eta_k = 0$ such that the sequence of solutions $(u^{\eta_k}, c_i^{\eta_k}, T^{\eta_k})$ satisfy
   \begin{align}
        &u^{\eta_k} \rightharpoonup u \,\, \text{in} \,\, L^2(0,\mathcal T; V),\quad u^{\eta_k} \stackrel{\ast}{\rightharpoonup} u \,\, \text{in} \,\, L^\infty(0,\mathcal T; H), \quad u^{\eta_k}\rightarrow u \,\, \text{in} \,\, L^2(0,\mathcal T; H),
        \\
        &T^{\eta_k} \rightharpoonup T \,\, \text{in} \,\, L^2(0,\mathcal T; H^1),\quad T^{\eta_k} \stackrel{\ast}{\rightharpoonup} T \,\, \text{in} \,\, L^\infty(0,\mathcal T; L^p), \quad T^{\eta_k}\rightarrow T \,\, \text{in} \,\, L^2(0,\mathcal T; L^p),
        \\
        &\sqrt{c_i^{\eta_k}}  \rightharpoonup \sqrt{c_i} \,\, \text{in} \,\, L^2(0,\mathcal T; H^1),\quad \sqrt{c_i^{\eta_k}} \stackrel{\ast}{\rightharpoonup} \sqrt{c_i} \,\, \text{in} \,\, L^\infty(0,\mathcal T; L^2),
        \\
        &\sqrt{c_i^{\eta_k} +1} \to \sqrt{c_i+1} \,\, \text{and}\,\, \sqrt{c_i^{\eta_k}} \to \sqrt{c_i} \,\, \text{in} \,\, L^2(0,\mathcal T; L^2),
        \\
        &c_i^{\eta_k} \rightharpoonup c_i \,\, \text{in} \,\, 
L^2(0,\mathcal T; L^{\frac32}) \cap L^\frac43(0,\mathcal T;  W^{1,\frac65}). 
     \end{align}
\end{cor}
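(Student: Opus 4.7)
The plan is to combine the uniform bounds from Propositions \ref{prop:uni-est-sol} and \ref{prop:time-derivative} with two standard tools: the Banach--Alaoglu theorem to obtain the weak and weak-$\ast$ convergences, and the Aubin--Lions--Simon compactness lemma to obtain the strong convergences. A diagonal extraction then produces a single subsequence $\{\eta_k\}$ along which every listed convergence holds simultaneously.

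\textbf{Weak and weak-$\ast$ compactness.} Proposition \ref{prop:uni-est-sol} provides uniform boundedness of $u^\eta$, $T^\eta$, $\sqrt{c_i^\eta}$, and $c_i^\eta$ in reflexive spaces such as $L^2(0,\mathcal T;V)$, $L^2(0,\mathcal T;H^1)$, $L^2(0,\mathcal T;L^{3/2})$, and $L^{4/3}(0,\mathcal T;W^{1,6/5})$, and in duals of separable spaces such as $L^\infty(0,\mathcal T;H)$, $L^\infty(0,\mathcal T;L^p)$, and $L^\infty(0,\mathcal T;L^2)$. Banach--Alaoglu thus delivers weakly and weak-$\ast$ convergent subsequences with the respective limits; consistency of these limits (e.g., that the weak limit of $c_i^\eta$ equals the square of the weak limit of $\sqrt{c_i^\eta}$) is deferred to the identification step below.

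\textbf{Strong compactness via Aubin--Lions--Simon.} For the velocity, the bound of $u^\eta$ in $L^2(0,\mathcal T;V)$, the bound of $\partial_t u^\eta$ in $L^{4/3}(0,\mathcal T;\mathcal D(A)')$, and the compact embedding $V\hookrightarrow\hookrightarrow H$ yield strong convergence in $L^2(0,\mathcal T;H)$. For the temperature, the bound of $T^\eta$ in $L^2(0,\mathcal T;H^1)$, the bound of $\partial_t T^\eta$ in $L^{4/3}(0,\mathcal T;H^{-1})$, and the compact embedding $H^1\hookrightarrow\hookrightarrow L^p$ (valid on $\TT^3$ for any finite $p<6$, which covers the range of interest) yield strong convergence in $L^2(0,\mathcal T;L^p)$. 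For $\sqrt{c_i^\eta+1}$, its bound in $L^2(0,\mathcal T;H^1)$ together with $\partial_t\sqrt{c_i^\eta+1}$ bounded in $L^1(0,\mathcal T;H^{-2})$ and the chain $H^1\hookrightarrow\hookrightarrow L^2\hookrightarrow H^{-2}$ yield strong convergence in $L^2(0,\mathcal T;L^2)$.

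\textbf{From $\sqrt{c_i^\eta+1}$ to $\sqrt{c_i^\eta}$: the main obstacle.} The step I expect to require the most care is the strong convergence of $\sqrt{c_i^\eta}$ itself, because Proposition \ref{prop:time-derivative} controls the time derivative only of $\sqrt{c_i^\eta+1}$ (the $+1$ shift is inserted precisely to avoid a vanishing denominator), so Aubin--Lions does not apply directly to $\sqrt{c_i^\eta}$. To bridge the two, I extract a further subsequence along which $\sqrt{c_i^{\eta_k}+1}\to\sqrt{c_i+1}$ pointwise a.e. on $\TT^3\times(0,\mathcal T)$, whence $c_i^{\eta_k}\to c_i$ and $\sqrt{c_i^{\eta_k}}\to\sqrt{c_i}$ pointwise a.e. Writing $c_i^\eta-c_i=(\sqrt{c_i^\eta+1}-\sqrt{c_i+1})(\sqrt{c_i^\eta+1}+\sqrt{c_i+1})$ and applying Cauchy--Schwarz together with the uniform $L^2(0,\mathcal T;L^2)$ bound on $\sqrt{c_i^\eta+1}$ upgrades the $L^2$ convergence of $\sqrt{c_i^\eta+1}$ into $L^1(0,\mathcal T;L^1)$ convergence of $c_i^{\eta_k}$ to $c_i$. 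The elementary bound $|\sqrt{a}-\sqrt{b}|^2\le|a-b|$ for $a,b\ge 0$ then promotes this to strong $L^2(0,\mathcal T;L^2)$ convergence of $\sqrt{c_i^{\eta_k}}$ to $\sqrt{c_i}$. Finally, uniqueness of distributional limits identifies the weak limit of $c_i^{\eta_k}$ in $L^2(0,\mathcal T;L^{3/2})\cap L^{4/3}(0,\mathcal T;W^{1,6/5})$ with $(\sqrt{c_i})^2=c_i$, and a standard diagonal argument produces a single sequence $\eta_k\to 0$ along which every claim of the corollary holds.
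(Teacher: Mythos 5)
Your proposal is correct and follows essentially the same route as the paper, which simply cites Propositions \ref{prop:uni-est-sol} and \ref{prop:time-derivative} together with the Banach--Alaoglu and Aubin--Lions theorems. You in fact supply more detail than the paper does, in particular the bridge from the strong convergence of $\sqrt{c_i^{\eta_k}+1}$ (the only quantity whose time derivative is controlled) to that of $\sqrt{c_i^{\eta_k}}$ via a.e.\ convergence and the inequality $|\sqrt{a}-\sqrt{b}|^2\le |a-b|$, a step the paper leaves implicit.
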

\begin{proof}
    The proof follows directly from Proposition \ref{prop:uni-est-sol}, Proposition \ref{prop:time-derivative}, the Banach-Alaoglu theorem, and the Aubin-Lions compactness theorem. 
\end{proof}

\section{Existence of Global Weak Solutions}\label{sec:existence}
In this section, we prove our first main result, Theorem \ref{thm:main-torus}, about the existence of global weak solutions to the original NPB system \eqref{sys:npb-full} in $\TT^3$.

\begin{proof}[Proof of Theorem \ref{thm:main-torus}]
    By virtue of Proposition \ref{thm:mollified-sys}, for each $k\in\mathbb N$ and $\eta_k>0$ we know that the mollified system \eqref{sys:npb-full-mo} has a unique global smooth solution $(c_i^{\eta_k}, u^{\eta_k}, T^{\eta_k})$. By Corollary \ref{cor-limit}, we know that they have corresponding limits $(c_i, u, T)$ with desired regularity as in \eqref{regularity-thm}. As $c_i^\ek(x,t)\geq 0$ and $T^\ek(x,t)\geq T^*$ for all $(x,t)\in \TT^3\times [0,\mathcal T]$, it follows that $c_i(x,t)\geq 0$ and $T(x,t)\geq T^*$ for a.e. $(x,t)\in\mathbb T^3\times [0,\mathcal T]$.
    In addition, from Proposition \ref{prop:uni-est-sol} and the Banach-Alaoglu theorem, we infer that there exists a limit function $\Psi$ such that $\Psi^{\eta_k} \rightharpoonup \Psi$ in $L^2(0,\mathcal T; W^{2,\frac32})\cap L^\frac43(0,\mathcal T; W^{3,\frac65})$. We define $\rho = \sum_{i=1}^N eN_Az_i c_i$ and $\widetilde\rho = -\varepsilon\Delta \Psi$. Then by uniqueness of limits we have $\rho=\widetilde\rho$, and therefore $ -\varepsilon\Delta \Psi= \rho =\sum_{i=1}^N eN_Az_i c_i$.
    
    For any time $\mathcal T>0$ and any test function $\phi,\varphi \in C^\infty([0,\mathcal T]\times \TT^3)$ such that $\phi(\mathcal T) = \varphi(\mathcal T)=0$ and $\nabla\cdot\varphi=0$, we take the space-time inner product of equations \eqref{eqn:npb-full-mo-1}, \eqref{eqn:npb-full-mo-3}, and \eqref{eqn:npb-full-mo-5} with $\phi$, $\varphi$, and $\phi$ respectively
    over $[0,\mathcal T]\times \TT^3$. We obtain
    \begin{subequations}
        \begin{align}
            &\int_0^{\mathcal T} \left[\langle\partial_t c_i^{\ek} ,\phi\rangle + \langle \mathcal J_{\ek}u^\ek\cdot\nabla c_i^\ek,\phi\rangle -D\langle\Delta c_i^\ek,\phi\rangle - D\frac{e}{k_B}\langle \nabla\cdot(z_i \frac1{T^\ek} c_i^\ek \nabla\mathcal J_\ek \mathcal J_\ek\Psi^\ek ),\phi\rangle\right] dt = 0,
            \\
            &\int_0^{\mathcal T} \Big[\langle\partial_t u^{\ek} ,\varphi\rangle + \langle \mathcal J_{\ek}u^\ek\cdot\nabla u^\ek,\varphi\rangle -\nu\langle\Delta u^\ek,\varphi\rangle \Big] dt 
            \\
            &\hspace{1cm} = \int_0^{\mathcal T} \Big[ g\Big\langle \alpha_T(T^\ek-T_r^\ek) - \alpha_S(\sum_{i=1}^N c_i^\ek M_i - S_r^\ek) ,\varphi_3\Big\rangle - \langle \mathcal J_\ek(\rho^\ek \nabla\mathcal J_\ek\mathcal J_\ek \Psi^\ek), \varphi\rangle  \Big] dt,
            \\
            &\int_0^{\mathcal T} \Big[\langle\partial_t T^{\ek} ,\phi\rangle + \langle \mathcal J_{\ek}u^\ek\cdot\nabla T^\ek,\phi\rangle -\kappa\langle\Delta T^\ek,\phi\rangle \Big] dt = 0,
        \end{align}
    \end{subequations}
    Our goal is to show that as $k\to \infty$ all the terms converge to the desired limits, and thus prove \eqref{thm:eqn1}--\eqref{thm:eqn3}.

\smallskip
\noindent{\bf Step 1. Convergence of linear terms.} We first consider the convergence of $\int_0^\mathcal T\langle \partial_t c_i^{\eta_k}, \phi\rangle dt$. Thanks to Corollary \ref{cor-limit} and by integration by parts,
we have
\begin{align}
    \int_0^\mathcal T\langle \partial_t c_i^{\eta_k}, \phi\rangle dt =  - \langle c_i^{\eta_k}(0), \phi(0)\rangle - \int_0^\mathcal T\langle  c_i^{\eta_k}, \partial_t\phi\rangle dt
    \to - \langle c_i(0), \phi(0)\rangle - \int_0^\mathcal T\langle  c_i, \partial_t\phi\rangle dt,
\end{align}
where we have used $c_i^{\eta_k}(0)\to c_i(0)$ in $L^2$ that follows from the convergence properties of mollifiers. Similarly, 
\begin{align*}
    \int_0^\mathcal T\langle \partial_t u^{\eta_k}, \varphi\rangle dt \to - \langle u_0, \varphi(0)\rangle - \int_0^\mathcal T\langle  u, \partial_t\varphi\rangle dt, \quad  \int_0^\mathcal T\langle \partial_t T^{\eta_k}, \phi\rangle dt \to - \langle T_0, \phi(0)\rangle - \int_0^\mathcal T\langle  T, \partial_t\phi\rangle dt.
\end{align*}
Integrating by parts and using the periodic boundary conditions, we have
\begin{align*}
     -\int_0^\mathcal T\langle \Delta c_i^{\eta_k}, \phi\rangle dt = \int_0^\mathcal T\langle  \nabla c_i^{\eta_k}, \nabla \phi\rangle dt \to \int_0^\mathcal T\langle \nabla c_i, \nabla \phi\rangle dt,
\end{align*}
thanks to Corollary \ref{cor-limit}. Analogously,
\begin{align*}
   -\int_0^\mathcal T\langle \Delta u^{\eta_k}, \varphi\rangle dt \to  \int_0^\mathcal T\langle \nabla u, \nabla\varphi\rangle dt, \quad -\int_0^\mathcal T\langle \Delta T^{\eta_k}, \phi\rangle dt \to  \int_0^\mathcal T\langle \nabla T, \nabla\phi\rangle dt.
\end{align*}
As $T_r^{\eta_k}=\overline{T^{\eta_k}} = \overline{T_0^{\eta_k}} = \overline{T_0}=T_r$ and $S_r^{\eta_k} = \sum\limits_{i=1}^N \overline{c_i^{\eta_k}} M_i =  \sum\limits_{i=1}^N\overline{c_i^{\eta_k}(0)} M_i  =  \sum\limits_{i=1}^N\overline{c_i(0)} M_i=S_r$, thanks to Corollary \ref{cor-limit}, one obtains the convergence of the linear term
\begin{align*}
    \int_0^{\mathcal T} \Big\langle \alpha_T (T^{\eta_k} - T_r^{\eta_k}) - \alpha_S(\sum\limits_{i=1}^N c_i^{\eta_k}M_i - S_r^{\eta_k}),\varphi_3\Big\rangle dt
    \to \int_0^{\mathcal T} \Big\langle \alpha_T (T - T_r) - \alpha_S(\sum\limits_{i=1}^N c_iM_i - S_r),\varphi_3\Big\rangle dt.
\end{align*}

\noindent{\bf Step 2. Convergence of advection terms.} For the advection terms in the concentration evolutions, for each $i\in\{1,\dots,N\}$ we bound
\begin{align}
    &\left|\langle \mathcal J_{\ek} u^{\ek} \cdot \nabla c_i^\ek - u\cdot\nabla c_i, \phi\rangle \right| 
    \\
    \leq &  \left|\langle \mathcal J_{\ek} (u^{\ek} - u) \cdot \nabla c_i^\ek, \phi\rangle \right| + \left|\langle (\mathcal J_{\ek} u - u) \cdot \nabla c_i^\ek, \phi\rangle \right| + \left|\langle u \cdot \nabla (c_i^\ek - c_i), \phi\rangle \right| 
    \\
    \leq & \left|\langle \mathcal J_{\ek} (u^{\ek} - u) \cdot \nabla \phi,  c_i^\ek \rangle \right| + \left|\langle (\mathcal J_{\ek} u - u) \cdot \nabla \phi, c_i^\ek \rangle \right| + \left|\langle u \cdot \nabla \phi, c_i^\ek - c_i\rangle \right| := A_1 + A_2 + A_3.
\end{align}
The first two terms are mainly controlled via interpolation as follows,
\begin{align}
    (A_1 + A_2)  \leq &  \|u^{\ek} - u\|_{L^3} \|\nabla\phi\|_{L^\infty} \|\sqrt{c_i^\ek}\|^2_{L^3}  +  \|\mathcal J_{\ek} u -u\|_{L^3} \|\nabla\phi\|_{L^\infty} \|\sqrt{c_i^\ek}\|^2_{L^3} 
    \\
    \leq & C\left( \|u^{\ek} - u\|_{L^2}^{\frac12} \|u^{\ek} - u\|_{H^1}^{\frac12} + \|\mathcal J_\ek  u - u\|_{L^2}^{\frac12} \|\mathcal J_\ek  u - u\|_{H^1}^{\frac12} \right) \|\nabla\phi\|_{L^\infty} \|\sqrt{c_i^\ek}\|_{L^2} \|\sqrt{c_i^\ek}\|_{H^1}.
\end{align}
As a consequence of Proposition \ref{prop:uni-est-sol} and Corollary \ref{cor-limit}, one has
\begin{align}
    &\int_0^{\mathcal T} (A_1 + A_2) dt  
    \\
    \leq &C\left( \|u^{\ek} - u\|_{L^2(0,\mathcal T;L^2)}^{\frac12} \|u^{\ek} - u\|_{L^2(0,\mathcal T;H^1)}^{\frac12} + \|\mathcal J_\ek  u - u\|_{L^2(0,\mathcal T;L^2)}^{\frac12} \|\mathcal J_\ek  u - u\|_{L^2(0,\mathcal T;H^1)}^{\frac12} \right) 
    \\
    &\hspace{1cm}\times \|\nabla\phi\|_{L^\infty(0,\mathcal T;L^\infty)} \|\sqrt{c_i^\ek}\|_{L^\infty(0,\mathcal T;L^2)} \|\sqrt{c_i^\ek}\|_{L^2(0,\mathcal T;H^1)} \to 0.
\end{align}
The latter convergence uses the mollifier property $\|\mathcal J_\ek  u - u\|_{L^2(0,\mathcal T;L^2)} \to 0$. As for $A_3$, we write 
\[
c_i^\ek-c_i= (\sqrt{c_i^\ek} - \sqrt{c_i})(\sqrt{c_i^\ek} + \sqrt{c_i}),
\]
and estimate
\begin{align}
    \int_0^{\mathcal T} A_3 dt \leq & C \int_0^{\mathcal T} \|u\|_{L^3} \|\nabla\phi\|_{L^\infty} \|\sqrt{c_i^\ek} - \sqrt{c_i}\|_{L^3} \|\sqrt{c_i^\ek} + \sqrt{c_i}\|_{L^3} dt
    \\
    \leq & C\|u\|_{L^\infty(0,\mathcal T; L^2)}^{\frac12} \|u\|_{L^2(0,\mathcal T; H^1)}^{\frac12} \|\sqrt{c_i^\ek} - \sqrt{c_i}\|_{L^2(0,\mathcal T; L^2)}^{\frac12} \|\sqrt{c_i^\ek} - \sqrt{c_i}\|_{L^2(0,\mathcal T; H^1)}^{\frac12}
    \\
    &\hspace{0.5cm}\times \|\sqrt{c_i^\ek} + \sqrt{c_i}\|_{L^\infty(0,\mathcal T; L^2)}^{\frac12} \|\sqrt{c_i^\ek} + \sqrt{c_i}\|_{L^2(0,\mathcal T; H^1)}^{\frac12} \|\nabla\phi\|_{L^\infty(0,\mathcal T;L^\infty)} \to 0.
\end{align}
Therefore, we deduce that
\begin{align}
    \int_0^{\mathcal T} \langle \mathcal J_\ek  u^\ek \cdot \nabla c_i^\ek ,\phi\rangle dt \to \int_0^{\mathcal T} \langle  u\cdot\nabla c_i,\phi\rangle dt.
\end{align}
The convergence of advection terms in the momentum equation and the temperature evolution follows similarly. Indeed the proof is simpler, as $u$ and $T$ have better bounds compare to $c_i$. It follows that
\begin{align}
    \int_0^{\mathcal T} \langle \mathcal J_\ek  u^\ek \cdot \nabla u^\ek ,\varphi\rangle dt \to \int_0^{\mathcal T} \langle  u\cdot\nabla u,\varphi\rangle dt, \quad 
    \int_0^{\mathcal T} \langle \mathcal J_\ek  u^\ek \cdot \nabla T^\ek ,\phi\rangle dt \to \int_0^{\mathcal T} \langle  u\cdot\nabla T,\phi\rangle dt.
\end{align}

\noindent{\bf Step 3. Convergence of the electromigration term.} Now we turn our attention to the convergence of the electromigration term that majorly differs from the other electrodiffusion models studied in the literature. This term in the NPB system  has a super nonlinear aspect governed by the variance of the temperature in space and time.  In order to overcome the challenges arising from this technical issue, we exploit the $L^{3+\delta}$ regularity imposed on the initial temperature (which propagates uniformly in time).  Indeed, we start by performing the following decompositon, 
\begin{align}
    &\left|\langle \nabla\cdot\Big(\frac1{T^\ek}c_i^\ek \nabla \mathcal J_\ek  \mathcal J_\ek  \Psi^\ek - \frac1T c_i \nabla\Psi\Big),\phi\rangle \right|
    \\
    =&\left|\langle \frac1{T^\ek}c_i^\ek \nabla \mathcal J_\ek  \mathcal J_\ek  \Psi^\ek - \frac1T c_i \nabla\Psi,\nabla\phi\rangle \right|
    \\
    \leq &\left|\langle (\frac1{T^\ek} - \frac1T) c_i^\ek \nabla \mathcal J_\ek  \mathcal J_\ek  \Psi^\ek ,\nabla\phi\rangle \right| 
    + 
    \left|\langle  \frac1T (c_i^\ek - c_i) \nabla \mathcal J_\ek  \mathcal J_\ek  \Psi^\ek ,\nabla\phi\rangle \right|
    \\
    &+
    \left|\langle  \frac1T c_i  (\mathcal J_\ek  \mathcal J_\ek  \nabla \Psi^\ek - \nabla \Psi) ,\nabla\phi\rangle \right|
    :=  B_1 + B_2 + B_3,
\end{align} and then we estimate each of $B_1$, $B_2$, and $B_3$ separetely. 

From \eqref{est:3} we have 
$
    \sqrt{c_i^\ek} \frac{1}{\sqrt{T^\ek}} \na \mathcal J_\ek \mathcal J_\ek \Psi^\ek  \in L^2(0,\mathcal T; L^2).
$
Given $\delta>0$, letting  $\delta_1 = \frac{4\delta}{1+\delta}$ and using H\"older's inequality, we bound $B_1$ as follows,
\begin{align}
    B_1 \leq & \left\|\frac1{\sqrt{T^\ek}}\right\|_{L^\infty} \left\|\frac1{T}\right\|_{L^\infty} \|T-T^\ek\|_{L^{3+\delta}}\|\sqrt{c_i^\ek}\|_{L^{6-\delta_1}} \|\sqrt{c_i^\ek}\frac1{\sqrt{T^\ek}} \nabla \mathcal J_\ek  \mathcal J_\ek  \Psi^\ek\|_{L^2} \|\nabla\phi\|_{L^\infty}.
\end{align}
By the Gagliardo–Nirenberg interpolation inequality, it follows that
\begin{align}
    \|\sqrt{c_i^\ek}\|_{L^{6-\delta_1}} \leq C \|\sqrt{c_i^\ek}\|_{H^1}^{\frac3{3+\delta}} \|\sqrt{c_i^\ek}\|_{L^2}^{\frac{\delta}{3+\delta}}.
\end{align}
Recall that $T^\ek\geq T^*$ and $T\geq T^*$, we have
\begin{align}\label{convergence-1}
    B_1 \leq C \|T-T^\ek\|_{L^{3+\delta}}^{\frac{2\delta}{6+2\delta}} \|T-T^\ek\|_{L^{3+\delta}}^{\frac{6}{6+2\delta}} \|\sqrt{c_i^\ek}\|_{H^1}^{\frac3{3+\delta}} \|\sqrt{c_i^\ek}\|_{L^2}^{\frac{\delta}{3+\delta}}\|\frac{\sqrt{c_i^\ek}}{\sqrt{T^\ek}} \nabla \mathcal J_\ek  \mathcal J_\ek  \Psi^\ek\|_{L^2}\|\nabla\phi\|_{L^\infty}.
\end{align}
From Corollary \ref{cor-limit}, we have the strong convergence
    $T^\ek \to T$ in $L^2(0,\mathcal T; L^{3+\delta})$ when $p=3+\delta$.
Integrating $B_1$ in time from $0$ to $\mathcal T$, and using H\"older's inequality in time, we manage to control the terms in \eqref{convergence-1} respectively by  their time Lebesgue $L^\frac{6+2\delta}{\delta}$, $L^\infty$, $L^{\frac{6+2\delta}{3}}$, $L^\infty$, $L^2$, and $L^\infty$  norms, and infer that
$\int_0^{\mathcal T} B_1 dt \to 0.$

A standard interpolation argument allows us to estimate $B_2$ as follows, 
\begin{align}
    B_2 \leq &\|\frac1T\|_{L^\infty} \|\sqrt{c_i^\ek}-\sqrt{c_i}\|_{L^3} \|\sqrt{c_i^\ek}+\sqrt{c_i}\|_{L^3} \|\nabla \mathcal J_\ek  \Psi^\ek\|_{L^3} \|\nabla \phi\|_{L^\infty}
    \\
    \leq & C\|\sqrt{c_i^\ek}-\sqrt{c_i}\|_{L^2}^{\frac12} \|\sqrt{c_i^\ek}-\sqrt{c_i}\|_{H^1}^{\frac12} \|\sqrt{c_i^\ek}+\sqrt{c_i}\|_{L^2}^{\frac12} \|\sqrt{c_i^\ek}+\sqrt{c_i}\|_{H^1}^{\frac12} 
    \\
    &\hspace{1cm}\times \|\nabla \mathcal J_\ek  \Psi^\ek\|_{L^2}^{\frac12}  \|\nabla \mathcal J_\ek  \Psi^\ek\|_{H^1}^{\frac12}  \|\nabla\phi\|_{L^\infty}.
\end{align}
Integrating in time from $0$ to $\mathcal{T}$, applying H\"older's inequality in time with exponents $4, 4, \infty, 4, \infty, 4, \infty$, and using Corollary  \ref{cor-limit}, we infer that $  \int_0^{\mathcal T} B_2 dt \to 0.$
    
    As for $B_3$, we observe that $\|\mathcal J_\ek  \mathcal J_\ek  \nabla \Psi^\ek\|_{L^2} \leq \|\mathcal J_\ek  \nabla \Psi^\ek\|_{L^2}$ and apply Proposition \ref{prop:uni-est-sol} to deduce that
$
\{\mathcal J_\ek  \mathcal J_\ek  \nabla \Psi^\ek\}_{k=1}^\infty$ are uniformly bounded in $L^\infty(0,\mathcal T; L^2)$, and thus in $L^4(0,\mathcal T; L^2)$.
Therefore, there exists a subsequence, also denoted by $\mathcal J_\ek  \mathcal J_\ek  \nabla \Psi^\ek$ (with a slight abuse of notation), that weakly converges to some function in $L^4(0,\mathcal T; L^2)$. We will show that this limit is actually $\nabla\Psi$. By elliptic estimate we have 
\begin{align}
    &\| \mathcal J_\ek  \mathcal J_\ek  \nabla \Psi^\ek -\nabla\Psi\|_{L^2}
    \\
    \leq &\| \mathcal J_\ek  \mathcal J_\ek  (\nabla \Psi^\ek - \nabla \Psi)\|_{L^2}
    + 
    \| \mathcal J_\ek   (\mathcal J_\ek  \nabla \Psi - \nabla \Psi)\|_{L^2} 
    +
    \| \mathcal J_\ek  \nabla \Psi - \nabla \Psi\|_{L^2} 
    \\
    \leq & \sum\limits_{i=1}^N C \|c_i^\ek- c_i\|_{L^\frac65} + 2\| \mathcal J_\ek  \nabla \Psi - \nabla \Psi\|_{L^2}.
\end{align}
The first term in the last line above can be controlled as follows, 
\begin{align}
     \|c_i^\ek- c_i\|_{L^\frac65} \leq &\|\sqrt{c_i^\ek} - \sqrt{c_i}\|_{L^2} \|\sqrt{c_i^\ek} + \sqrt{c_i}\|_{L^3} 
     \\
     \leq & C\|\sqrt{c_i^\ek} - \sqrt{c_i}\|_{L^2} \|\sqrt{c_i^\ek} + \sqrt{c_i}\|_{L^2}^{\frac12} \|\sqrt{c_i^\ek} + \sqrt{c_i}\|_{H^1}^{\frac12}.
\end{align}
By applying the  $L^{\frac43}$ norm in time to $\| \mathcal J_\ek  \mathcal J_\ek  \nabla \Psi^\ek -\nabla\Psi\|_{L^2}$, we obtain that 
\begin{align}
  &\| \mathcal J_\ek  \mathcal J_\ek  \nabla \Psi^\ek -\nabla\Psi\|_{L^{\frac43}(0,\mathcal T; L^2)} 
  \\
  \leq &C\sum\limits_{i=1}^N \|c_i^\ek- c_i\|_{L^{\frac43}(0,\mathcal T; L^\frac65)} + 2 \| \mathcal J_\ek  \nabla \Psi - \nabla \Psi\|_{L^{\frac43}(0,\mathcal T; L^2)}
  \\
  \leq &C  \|\sqrt{c_i^\ek} - \sqrt{c_i}\|_{L^2(0,\mathcal T; L^2)} \|\sqrt{c_i^\ek} + \sqrt{c_i}\|_{L^\infty(0,\mathcal T; L^2)}^{\frac12} \|\sqrt{c_i^\ek} + \sqrt{c_i}\|_{L^2(0,\mathcal T; H^1)}^{\frac12} 
  \\
  &\hspace{1cm}+ 2 \| \mathcal J_\ek  \nabla \Psi - \nabla \Psi\|_{L^{\frac43}(0,\mathcal T; L^2)} \to 0,
\end{align}
after making use of Proposition \ref{prop:uni-est-sol}, Corollary \ref{cor-limit}, and convergence properties of mollifiers. 
Then by the uniqueness of limits, we infer that $\mathcal J_\ek  \mathcal J_\ek  \nabla \Psi^\ek \rightharpoonup \nabla \Psi$ weakly in $L^4(0,\mathcal T; L^2)$.
Since $\frac1T, \nabla\phi\in L^\infty(0,\mathcal T; L^\infty)$ and $c_i\in L^{\frac43}(0,\mathcal T; L^2)$, we exploit the weak convergence of $\mathcal J_\ek  \mathcal J_\ek  \na \Psi^\ek$ to $\nabla \Psi$ to conclude that
$
    \int_0^{\mathcal T} B_3 dt \to 0.
$
Combining the convergence results for  $B_1$, $B_2$, and $B_3$ yields 
\begin{align}
     \int_0^{\mathcal T} \langle \frac1{T^\ek}c_i^\ek \nabla \mathcal J_\ek  \mathcal J_\ek  \Psi^\ek ,\nabla\phi\rangle dt \to \int_0^{\mathcal T} \langle \frac1T c_i \nabla\Psi ,\nabla\phi\rangle dt.
\end{align}

\noindent{\bf Step 4. Convergence of the electric forcing term.} Finally, we prove the convergence of the electric term forcing the Navier-Stokes equations. We have
\begin{align}
    &\left|\langle \mathcal J_\ek  (\rho^\ek \mathcal J_\ek  \mathcal J_\ek  \nabla\Psi^\ek) - \rho \nabla\Psi, \varphi  \rangle \right|
    \\
    \leq &\left|\langle \mathcal J_\ek  ((\rho^\ek-\rho) \mathcal J_\ek  \mathcal J_\ek  \nabla\Psi^\ek), \varphi  \rangle \right| + \left|\langle \mathcal J_\ek  (\rho \mathcal J_\ek  \mathcal J_\ek  \nabla\Psi^\ek  ) - \rho \mathcal J_\ek  \mathcal J_\ek  \nabla\Psi^\ek, \varphi  \rangle \right|
    \\
    &+ \left|\langle\rho \mathcal J_\ek  \mathcal J_\ek  \nabla\Psi^\ek - \rho \nabla\Psi, \varphi  \rangle \right| := C_1 + C_2 + C_3.
\end{align}

For $C_1$, by H\"older's inequality and the interpolation inequality, one deduces
\begin{align}
    C_1 &\leq C\|\rho^\ek - \rho\|_{L^{\frac32}} \| \mathcal J_\ek  \nabla\Psi^\ek\|_{L^3} \|\varphi\|_{L^\infty}
    \\
    &\leq C \sum\limits_{i=1}^N  \|\sqrt{c_i^\ek}-\sqrt{c_i}\|_{L^3} \|\sqrt{c_i^\ek}+\sqrt{c_i}\|_{L^3} \|\nabla \mathcal J_\ek  \Psi^\ek\|_{L^3} \|\varphi\|_{L^\infty}
    \\
    &\leq C \sum\limits_{i=1}^N \|\sqrt{c_i^\ek}-\sqrt{c_i}\|_{L^2}^{\frac12} \|\sqrt{c_i^\ek}-\sqrt{c_i}\|_{H^1}^{\frac12} \|\sqrt{c_i^\ek}+\sqrt{c_i}\|_{L^2}^{\frac12} \|\sqrt{c_i^\ek}+\sqrt{c_i}\|_{H^1}^{\frac12} 
    \\
    &\hspace{1cm}\times \|\nabla \mathcal J_\ek  \Psi^\ek\|_{L^2}^{\frac12}  \|\nabla \mathcal J_\ek  \Psi^\ek\|_{H^1}^{\frac12}  \|\varphi\|_{L^\infty}.
\end{align}
As shown for $B_2$, we deduce that 
$
    \int_0^{\mathcal T} C_1 dt \to 0.
$

For $C_2$, we compute
\begin{align}
    \int_0^{\mathcal T} C_2 dt \leq &C\|\rho\|_{L^{\frac43}(0,\mathcal T; L^2)}  \|\mathcal J_\ek  \nabla\Psi^\ek\|_{L^4(0,\mathcal T; L^3)} \|\mathcal J_\ek  \varphi - \varphi\|_{L^\infty(0,\mathcal T; L^6)}
    \\
    \leq &C\sum\limits_{i=1}^N \|c_i\|_{L^{\frac43}(0,\mathcal T; L^2)}  \|\mathcal J_\ek  \nabla\Psi^\ek\|^{\frac12}_{L^\infty(0,\mathcal T; L^2)} \|\mathcal J_\ek  \nabla\Psi^\ek\|_{L^2(0,\mathcal T; H^1)}^{\frac12} 
    \\
    &\hspace{2cm}\times\|\mathcal J_\ek  \varphi - \varphi\|_{L^\infty(0,\mathcal T; L^6)} \to 0,
\end{align}
where we have used the mollifier properties and Proposition \ref{prop:uni-est-sol}.

Finally, thanks to the weak convergence of $\mathcal J_\ek  \mathcal J_\ek  \nabla\Psi^\ek \rightharpoonup \nabla \Psi$ in $L^4(0,\mathcal T; L^2)$, as well as the regularity $\rho\in L^{\frac43}(0,\mathcal T; L^2)$ and $\varphi\in L^\infty(0,\mathcal T; L^\infty)$, we obtain $\int_0^{\mathcal T} C_3 dt \to 0$, as shown for the term $B_3$.

Combining the aforementioned results for $C_1$, $C_2$, and $C_3$, we conclude that 
\begin{align}
     \int_0^{\mathcal T} \langle \mathcal J_\ek  (\rho^\ek \mathcal J_\ek  \mathcal J_\ek  \nabla\Psi^\ek),\varphi\rangle dt \to  \int_0^{\mathcal T} \langle \rho\nabla\Psi,\varphi \rangle dt.
\end{align}

Therefore, $(c_i,u,T)$ is a weak solution to system \eqref{sys:npb-full} on $[0,\mathcal T]$ in the sense of \eqref{thm:eqn1}--\eqref{thm:eqn3} and satisfying the regularity \eqref{regularity-thm}. This finishes the proof.
\end{proof}

\section{Long-time Dynamics}\label{sec:longtime}
Given a weak solution $(u,c_i, T)$ to the NPB model obtained by Theorem \ref{thm:main-torus}, we study its long-time dynamics and establish its convergence in time to the steady state $(u^\#, c_i^\#, T^\#) = (0, \bar{c}_i, \bar{T}) = (0, \overline{c_i(0)}, \overline{T_0})$. In addition, we address the decay of the entropy $\int_{\TT^3} c_i \log \frac{c_i}{\bar{c}_i} dx$ to $0$.

\subsection{Convergence of entropies} In this subsection, we address the convergence of sequences of entropies in Lebesgue spaces. 
\beg{prop} \label{entropyconvergence} Let $\left\{f_n(t,x) \right\}_{n=1}^{\infty} $ be a sequence of nonnegative real-valued functions. Suppose the following holds:
\begin{enumerate}
    \item The sequence $\left\{\sqrt{f_n}\right\}_{n=1}^{\infty}$ converges in $L^2(0,\mathcal{T}; L^2)$ to $\sqrt{f}$ for some nonnegative function $f$;
    \item The sequence $\left\{f_n\right\}_{n=1}^{\infty}$ is uniformly bounded in $L^{\fr{3}{2}}(0,\mathcal{T}; L^{\fr{3}{2}})$, and $f \in L^{\fr{3}{2}}(0,\mathcal{T}; L^{\fr{3}{2}})$.
\end{enumerate} For $t \in [0, \mathcal{T}]$, define the sequence of entropies
$ 
\mathcal{E}_n(t) = \int_{\TT^3} f_n \log f_n dx.
$ Then $\left\{\mathcal{E}_n(t)\right\}_{n=1}^{\infty}$has a subsequence that converges to $\mathcal{E}(t)= \int_{\TT^3} f \log f dx$ in $L^1(0, \mathcal{T})$.
\end{prop}

\beg{proof}
For $n \in \N$ and $t \in [0, \mathcal{T}]$, we consider the remainders 
\be 
R_n(t) = \int_{\TT^3} |f_n \log f_n - f \log f | dx
\ee and bound $R_n(t)$ by the sum of three terms, 
$R_n^1(t)$, $ R_n^2(t)$, and  $R_n^3(t)$, where 
\be 
R_n^1(t) = \int_{\TT^3} |\sqrt{f_n} \sqrt{f_n} \log f_n - \sqrt{f_n} \sqrt{f} \log f_n| dx,
\ee 
\be 
R_n^2(t) =  \int_{\TT^3} |\sqrt{f_n} \sqrt{f} \log f_n - \sqrt{f_n} \sqrt{f} \log f| dx,
\ee 
\be 
R_n^3(t) =  \int_{\TT^3} |\sqrt{f_n} \sqrt{f} \log f - \sqrt{f} \sqrt{f} \log f| dx.
\ee By the time-space Cauchy-Schwarz inequality and the algebraic estimate $x (\log x)^2 \le 1+x^{\fr{3}{2}}$ that holds for nonnegative real numbers $x \in \R$, we estimate 
\be 
\beg{aligned}
\left|\int_{0}^{\mathcal{T}} R_n^1(t) dt\right| 
&\le \left(\int_{0}^{\mathcal{T}} \int_{\TT^3} \left(\sqrt{f_n} - \sqrt{f}\right)^2 dxdt\right)^{\fr{1}{2}} \left(\int_{0}^{\mathcal{T}} \int_{\TT^3} \left(\sqrt{f_n} \log f_n \right)^2  dxdt\right)^{\fr{1}{2}}
\\&\le C\left(\int_{0}^{\mathcal{T}} \int_{\TT^3} \left(\sqrt{f_n} - \sqrt{f}\right)^2 dxdt\right)^{\fr{1}{2}} \left(\int_{0}^{\mathcal{T}} \int_{\TT^3} \left(1 + f_n^{\fr{3}{2}} \right)  dxdt\right)^{\fr{1}{2}},
\end{aligned}
\ee where the right-hand side converges to $0$ as $n \rightarrow \infty$ due to the uniform boundedness of the functions $f_n$ in $L^{\fr{3}{2}}(0, \mathcal{T}; L^{\fr{3}{2}})$ and the convergence $\sqrt{f_n} \rightarrow \sqrt{f}$ in $L^2(0,\mathcal{T}; L^2)$. Similarly, $\int_{0}^{\mathcal{T}} R_n^3(t) dt$ converges to $0$. 

As $\left\{\sqrt{f_{n}}\right\}_{n=1}^{\infty}$ converges to $\sqrt{f}$ in $L^{2}(0, \mathcal{T}; L^2)$, we infer the existence of a subsequence $\left\{\sqrt{f_{n_{k}}}\right\}_{k=1}^{\infty}$ that converge to $\sqrt{f}$ a.e. on $[0,\mathcal{T}] \times \TT^3$. Consequently, $\left\{f_{n_{k}}\right\}_{k=1}^{\infty}$ converges to $f$ a.e. on $[0, \mathcal{T}] \times \TT^3$. 
For $k \in \N$, we define the sequences of functions
\be 
h_{n_k} = |\sqrt{f_{n_k}} \sqrt{f} \log f_{n_k} - \sqrt{f_{n_k}} \sqrt{f} \log f|, \quad
g_{n_{k}} = \sqrt{f} \left(1 + f_{n_{k}}^{\fr{3}{4}}\right) + \sqrt{f_{n_{k}}} \left( 1+ f^{\fr{3}{4}}\right).
\ee We note that $|h_{n_{k}}| \le g_{n_{k}}$ for all $k \in \N$, all $t \in [0,\mathcal{T}]$, and all $x \in \TT^3$. Moreover, $\left\{h_{n_{k}}\right\}_{k=1}^{\infty}$ and $\left\{g_{n_{k}}\right\}_{k=1}^{\infty}$ converge respectively to 0 and $2\sqrt{f}(1+f^{\fr{3}{4}})$ a.e. on $[0, \mathcal{T}] \times \TT^3$. We will show that 
\be \la{conver11}
\int_{[0, \mathcal{T}] \times \TT^3}  \sqrt{f} \left(1 + f_{n_{k}}^{\fr{3}{4}}\right) dxdt \rightarrow  \int_{[0, \mathcal{T}] \times \TT^3} \sqrt{f}(1+f^{\fr{3}{4}})dxdt, 
\ee
\be \la{conver12}
\int_{[0, \mathcal{T}] \times \TT^3} \sqrt{f_{n_{k}}} \left( 1+ {f}^{\fr{3}{4}}\right) dxdt \rightarrow \int_{[0, \mathcal{T}] \times \TT^3} \sqrt{f}(1+f^{\fr{3}{4}}) dxdt
\ee as $k \rightarrow \infty$ and deduce from the Generalized Dominated Convergence Theorem that $\left\{\int_{0}^{\mathcal{T}}R^2_{n_{k}} dt \right\}_{k=1}^{\infty}$ converges to 0. In order to prove \eqref{conver11},  we decompose the difference 
$
D_k = \int_{0}^{\mathcal{T}} \int_{\TT^3} [\sqrt{f} (f_{n_k}^{\fr{3}{4}} - f^{\fr{3}{4}} ) ] dxdt $ into the sum of two terms, $D_k^1$ and $D_k^2$, where
\be 
D_k^1 := \int_{0}^{\mathcal{T}} \int_{\TT^3} \sqrt{f} f_{n_k}^{\fr{1}{4}} \left(\sqrt{f_{n_k}} - \sqrt{f} \right) dxdt \quad \text{and} \quad 
D_k^2 :=  \int_{0}^{\mathcal{T}} \int_{\TT^3} \sqrt{f} \sqrt{f} \left(f_{n_k}^{\fr{1}{4}} - f^{\fr{1}{4}} \right) dxdt,
\ee and we prove that $\left\{D_k^1\right\}_{k=1}^{\infty}$ and $\left\{D_k^2\right\}_{k=1}^{\infty}$ converge to $0$ as $k \rightarrow \infty$. Indeed, 
\be 
|D_k^1| \le \left(\int_{0}^{\mathcal{T}} \int_{\TT^3} (\sqrt{f_{n_k}} - \sqrt{f})^2 dxdt  \right)^{\frac{1}{2}} \left(\int_{0}^{\mathcal{T}} \int_{\TT^3} f^{\fr{3}{2}}  dxdt \right)^{\frac{1}{3}} \left(\int_{0}^{\mathcal{T}} \int_{\TT^3} f_{n_k}^{\fr{3}{2}}  dxdt\right)^{\frac{1}{6}}
\ee in view of H\"older's inequality, and so $D_k^1 \rightarrow 0$. By making use of the algebraic identity $(a^{\fr{1}{4}} - b^{\fr{1}{4}})(a^{\fr{1}{4}} + b^{\fr{1}{4}}) = \sqrt{a} - \sqrt{b}$ that holds for any nonnegative real numbers $a$ and $b$, we estimate
\be 
\beg{aligned}
|D_k^2| 
&= \left|\int_{0}^{\mathcal{T}} \int_{\TT^3} \sqrt{f} \sqrt{f} \frac{\sqrt{f_{n_k}} - \sqrt{f}}{f_{n_k}^{\fr{1}{4}} + f^{\fr{1}{4}}} dxdt\right|
\le \int_{0}^{\mathcal{T}} \int_{\TT^3} \sqrt{f} \sqrt{f} \frac{|\sqrt{f_{n_k}} - \sqrt{f}|}{ f^{\fr{1}{4}}} dxdt
\\&= \int_{0}^{\mathcal{T}} \int_{\TT^3} f^{\fr{3}{4}} |\sqrt{f_{n_k}} - \sqrt{f}|  dxdt
\le \left(\int_{0}^{\mathcal{T}} \int_{\TT^3} (\sqrt{f_{n_k}} - \sqrt{f})^2 dxdt  \right)^{\frac{1}{2}} \left(\int_{0}^{\mathcal{T}} \int_{\TT^3} f^{\fr{3}{2}}  dxdt \right)^{\frac{1}{2}},
\end{aligned}
\ee and thus $\left\{D_k^2\right\}_{k=1}^{\infty}$ converges to $0$ as well. This gives the desired convergence property \eqref{conver11}. The proof of \eqref{conver12} is similar and thus is omitted. Therefore, $\left\{\int_{0}^{\mathcal{T}} R_{n_{k}} dt\right\}_{k=1}^{\infty}$ converges to 0. This ends the proof of Proposition \ref{entropyconvergence}.
\end{proof}

\begin{cor}\label{cor-entropy}
    Given $\mathcal T>0$, the sequence $E^\eta(t) = \sum_{i=1}^N \int_{\TT^3} c_i^\eta(t) \log\frac{c_i^\eta(t)}{\overline{c_i^\eta}} dx$ has a subsequence that converges to  $E(t)=\sum_{i=1}^N \int_{\TT^3} c_i(t) \log\frac{c_i(t)}{\bar{c}_i} dx$ strongly in $L^1(0,\mathcal T)$.
\end{cor}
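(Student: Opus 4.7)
The plan is to reduce Corollary \ref{cor-entropy} to a direct application of Proposition \ref{entropyconvergence} applied to each concentration sequence $\{c_i^\eta\}$ separately, followed by a diagonal extraction.

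First, since $|\TT^3|=1$, I would rewrite
\[
E_i^\eta(t) := \int_{\TT^3} c_i^\eta(t)\log\frac{c_i^\eta(t)}{\overline{c_i^\eta}}\,dx
= \int_{\TT^3} c_i^\eta(t)\log c_i^\eta(t)\,dx - \overline{c_i^\eta}\log\overline{c_i^\eta},
\]
and analogously for $E_i(t)$. Conservation of mean (both under the mollifier $\mathcal J_\eta$ and under the concentration equations, by the divergence form of the flux) gives
$\overline{c_i^\eta}=\overline{c_i^\eta(0)}=\overline{c_i(0)}=\overline{c_i}$,
so these quantities are time-independent and, crucially, identical for the approximants and their limit. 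Hence the constant terms cancel in the difference $E_i^\eta(t)-E_i(t)$, and it suffices to prove that for every $i$ some subsequence of $\int_{\TT^3} c_i^\eta\log c_i^\eta\,dx$ converges to $\int_{\TT^3} c_i\log c_i\,dx$ in $L^1(0,\mathcal T)$.

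Next I would verify the two hypotheses of Proposition \ref{entropyconvergence} for $f_n = c_i^{\eta_n}$. Hypothesis (1) is precisely the strong convergence $\sqrt{c_i^{\eta_k}}\to\sqrt{c_i}$ in $L^2(0,\mathcal T;L^2)$ provided by Corollary \ref{cor-limit}. Hypothesis (2) follows from Proposition \ref{prop:uni-est-sol}: $c_i^\eta$ is uniformly bounded in $L^2(0,\mathcal T;L^{3/2})$, and since $[0,\mathcal T]$ has finite Lebesgue measure, H\"older's inequality in time yields a uniform bound in $L^{3/2}(0,\mathcal T;L^{3/2})$. Weak lower semicontinuity of norms transfers this bound to the limit, giving $c_i\in L^{3/2}(0,\mathcal T;L^{3/2})$.

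Proposition \ref{entropyconvergence} then produces, for each fixed $i$, a subsequence along which $\int_{\TT^3} c_i^\eta \log c_i^\eta\,dx \to \int_{\TT^3} c_i\log c_i\,dx$ in $L^1(0,\mathcal T)$. A standard diagonal extraction (over the finite index set $i\in\{1,\dots,N\}$) produces a single subsequence $\{\eta_k\}$ along which this convergence holds simultaneously for all $i$; summing over $i$ and reinstating the (cancelling) mean constants gives the claim. There is no serious obstacle: the analytic content is already contained in Proposition \ref{entropyconvergence}, and the corollary is mostly a matter of bookkeeping the mean values and extracting a common subsequence.
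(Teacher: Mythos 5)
Your proposal is correct and follows essentially the same route as the paper: split off the mean term $\overline{c_i^\eta}\log\overline{c_i^\eta}$ (which matches $\overline{c_i}\log\overline{c_i}$ by conservation of means), then apply Proposition \ref{entropyconvergence} to $\int_{\TT^3} c_i^\eta\log c_i^\eta\,dx$ using the strong convergence of $\sqrt{c_i^{\eta_k}}$ from Corollary \ref{cor-limit} and the uniform $L^2(0,\mathcal T;L^{3/2})\subset L^{3/2}(0,\mathcal T;L^{3/2})$ bound from Proposition \ref{prop:uni-est-sol}. The finite diagonal extraction over $i\in\{1,\dots,N\}$ is a harmless bookkeeping step that the paper leaves implicit.
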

\begin{proof}
    First notice that $$\int_{\TT^3} c_i^\eta(t) \log\overline{c_i^\eta} dx = \log\overline{c_i^\eta} \int_{\TT^3} c_i^\eta(t) dx= \log\overline{c_i} \int_{\TT^3} c_i(t) dx= \int_{\TT^3} c_i(t) \log\overline{c_i} dx.$$
    Then the result follows from Proposition \ref{prop:uni-est-sol}, Corollary \ref{cor-limit}, and Proposition \ref{entropyconvergence}.
\end{proof}

\subsection{Long-time dynamics} Now we present the proof of Theorem \ref{thm:longtime}.

\begin{proof}[Proof of Theorem \ref{thm:longtime}]
First, recall that from \eqref{est:T} we have $\|T^\eta(t)-\overline{T^\eta_0}\|_{L^2}^2 \leq \|T_0-\overline{T_0}\|_{L^2}^2 e^{-\frac{2\kappa}{C_p} t}$ for all $\eta$. If $T$ is a weak solution obtained by passing to the limit $\eta\to0$ from $T^\eta$, we know from Corollary \ref{cor-limit} that $\|T^\eta(t)-\overline{T^\eta_0}\|_{L^2} \to \|T(t)-\overline{T_0}\|_{L^2}$ strongly in $L^2(0,\mathcal T)$, thus a subsequence converges for a.e. $t\in[0,\mathcal T]$. Then it follows that for a.e. $t\in[0,\mathcal T]$,
\begin{align*}
   \|T(t)-\overline{T_0}\|_{L^2}^2 \leq \|T_0-\overline{T_0}\|_{L^2}^2 e^{-\frac{2\kappa}{C_p} t}. 
\end{align*}

Next, we address the decay in time and rate of convergence of the energy 
\begin{equation}
     \mathcal E^\eta:={\varepsilon}\left\|\nabla\mathcal J_\eta \Psi^\eta\right\|_{L^2}^2 +  \|u^\eta\|_{L^2}^2 + 2k_BN_A{T^*} E^\eta
\end{equation}
by exploiting the dissipation 
\begin{equation}
     \mathcal D^\eta:=\frac{2D}{\varepsilon} \|\mathcal J_\eta \rho^\eta\|_{L^2}^2 + \nu \|\na u^\eta\|_{L^2}^2 + \frac{Dk_BN_AT^*}2 \sum\limits_{i=1}^N\|\nabla \sqrt{c_i^\eta}\|_{L^2}^2
\end{equation}
that governs the time evolution of $\mathcal E^{\eta}$, as shown in \eqref{est:6}. In view of the Poisson equation $-\varepsilon \Delta \mathcal J_\eta \Psi^\eta = \mathcal J_\eta \rho^\eta$, we have the elliptic estimate
\begin{equation}
    \frac{2D}{\varepsilon}\|\mathcal J_\eta \rho^\eta\|_{L^2}^2 \geq \frac{2D\varepsilon}{C_e} \|\nabla\mathcal J_\eta \Psi^\eta\|_{L^2}^2,
\end{equation} 
and by the Poincar\'e inequality, one has 
\begin{equation}
    \frac\nu2\|\nabla u^\eta\|_{L^2}^2 \geq \frac{\nu}{2C_p} \|u^\eta\|_{L^2}^2.
\end{equation}
Applying the logarithmic Sobolev inequality \cite[Theorem 1]{abdo2024logarithmic} we obtain 
\begin{equation}
    \frac{Dk_BN_AT^*}4 \sum\limits_{i=1}^N\|\nabla \sqrt{c_i^\eta}\|_{L^2}^2 \geq \frac{Dk_B N_A T^*}{4C_s} \sum\limits_{i=1}^N \int_{\TT^3} c_i^\eta \log \fr{c_i^\eta}{\bar{c}_i} dx =  \frac{Dk_BN_AT^*}{4C_s} E^\eta. 
\end{equation}

Now notice that the term $\overline{c_i(0)}^2 \|u\|_{L^2}^2$ on the right-hand side of \eqref{est:6} tremendously destroys the decay of energy. Instead, an application of the Poincar\'e inequality allows this term to get absorbed by the dissipation under some size condition imposed on $\overline{c_i(0)}$. In order to track such a condition, we revisit \eqref{est:salinity} and obtain
\begin{equation} \la{correctedsal}
    \begin{split}
        &\left|\int_{\mathbb T^3} g\alpha_S(\sum\limits_{i=1}^N c_i^\eta M_i - S^\eta_r) u^\eta_3 dx\right| \leq  C\alpha_S\sum\limits_{i=1}^N \|c^\eta_i - \overline{c_i^\eta}\|_{L^{\frac32}} \|u^\eta\|^{\frac12}_{L^2}\|\nabla u^\eta\|^{\frac12}_{L^2}
        \\
    \leq & C\alpha_S\sum\limits_{i=1}^N \|c^\eta_i - \overline{c_i^\eta}\|_{W^{1,1}}  \|\nabla u^\eta\|_{L^2}  \leq \alpha_S\sum\limits_{i=1}^N \|\nabla c_i^\eta\|_{L^1}  \|\nabla u^\eta\|_{L^2}
    \\
    \leq & C\alpha_S\sum\limits_{i=1}^N \|c^\eta_i\|_{L^2} \|\nabla \sqrt{c^\eta_i}\|_{L^2}  \|\nabla u^\eta\|_{L^2} 
    \leq C\alpha_S\sum\limits_{i=1}^N \|c_i^\eta\|_{L^1}^{\frac12} \|\nabla \sqrt{c_i^\eta}\|_{L^2}  \|\nabla u^\eta\|_{L^2}
    \\
    \leq &\frac\nu4\|\nabla u^\eta\|^2_{L^2} + C\nu^{-1}\alpha_S^2 \sum\limits_{i=1}^N \overline{c_i(0)} \|\nabla \sqrt{c_i^\eta}\|^2_{L^2}, 
    \end{split}
\end{equation}
where the constant $C$ depends only on the domain $\TT^3$.
If we impose the following size assumption on $\overline{c_i(0)}$:
\begin{equation}\label{smallness}
    C\nu^{-1}\alpha_S^2 \max\limits_{i\in\{1,\dots,N\}} \overline{c_i(0)} \leq \frac{Dk_B N_AT^*}4 \Leftrightarrow  \max\limits_{i\in\{1,\dots,N\}} \overline{c_i(0)} \leq \frac{Dk_B N_AT^* \nu}{4C\alpha_S^2},
\end{equation}
we can use the dissipation to absorb the terms on the right-hand side of \eqref{correctedsal}.

Denoting by $M:= \min\{\frac{2D}{C_e}, \frac{\nu}{2C_p}, \frac{D}{8C_s}\}$,
 combining the estimates above, and taking advantage of the temperature decaying estimates \eqref{est:T}, we rewrite \eqref{est:6} as
\begin{equation}\label{est:7}
    \begin{split}
         \frac{d}{dt}\mathcal E^\eta + M\mathcal E^\eta
    \leq  C\|T^\eta-T_r\|_{L^2}^2 \leq C\|T^\eta_0-T_r\|_{L^2}^2 e^{-\frac{2\kappa}{C_p} t}.
    \end{split}
\end{equation}
Letting $\widetilde{M}:=\min\{M, \frac{\kappa}{C_p}\}$, we end up with the energy inequality 
\begin{equation}
    \frac{d}{dt}\mathcal E^\eta + \widetilde M\mathcal E^\eta\leq C\|T^\eta_0-T_r\|_{L^2}^2 e^{-2\widetilde M t}.
\end{equation}
Integrating both sides from $0$ to $t\in [0,\mathcal T]$, thanks to Proposition \ref{fenchel}, we conclude that for any $t\in [0,\mathcal T]$,
\begin{equation}\label{eqn:sec5-1}
    \mathcal E^\eta(t) \leq  \left(\mathcal E^\eta(0) + C\widetilde{M}^{-1}\|T^\eta_0-T_r\|_{L^2}^2\right) e^{-\widetilde Mt} \le \left(\mathcal E(0) + C\widetilde{M}^{-1}\|T_0-T_r\|_{L^2}^2\right) e^{-\widetilde Mt}.
\end{equation} 

If $(c_i,u,T)$ is a weak solution obtained by passing to the limit $\eta\to 0$ from $(c^\eta_i,u^\eta,T^\eta)$, we infer from Corollary \ref{cor-limit} and Corollary \ref{cor-entropy} that up to a subsequence, $\|u^\eta(t)\|_{L^2} \to  \|u(t)\|_{L^2}$ in $L^2(0,\mathcal T)$ and $E^\eta(t) \to E(t)$ in $L^1(0,\mathcal T)$. By passing to a further subsequence in $\eta\to 0$, \eqref{eqn:sec5-1} implies that for a.e. $t\in [0,\mathcal T]$,
\begin{equation}
    \|u(t)\|_{L^2}^2 + E(t) = \|u(t)\|_{L^2}^2 + \int_{\TT^3} c_i(t) \log\frac{c_i(t)}{\bar{c}_i} \le  Ce^{-\widetilde Mt}.
\end{equation}

Finally, thanks to the Csiszar-Kullback-Pinsker inequality (see e.g. \cite{Tsybakov2008}), we have $\|c_i(t)-\overline{c_i(0)}\|^2_{L^1} = \|c_i(t)-\overline{c_i}\|^2_{L^1} \leq  CE(t)$, which leads to the exponential decay in time of $c_i(t)$ to $\overline{c_i(0)}$ in $L^1$. 
\end{proof}

\begin{rem}
 We point out that the size condition \eqref{smallness} can be dropped in the case of $\alpha_S=0$, i.e., when the salinity in the buoyancy force is negligible. Moreover, as $k_B N_A=R \approx 8.3$ is of order $O(1)$, the size condition \eqref{smallness} makes sense in real-world applications.
\end{rem}

\section{Concluding Remarks}\label{sec:conclusion}
We introduce a new electrodiffusion model, the Nernst-Planck-Boussinesq (NPB) system, which incorporates variational temperature and buoyancy forces, enhancing its generality and realism compared to other electrodiffusion models which assume constant temperature in space and time. From a mathematical point of view, temperature variation in the NPB system brings forth a more complicated nonlinear structure in the electromigration term. Due to the variation of temperature, the techniques employed for the NPNS system (see, for example, \cite{constantin2019nernst}) are not applicable. 

In the case of equal diffusivities for all ionic species, we successfully establish the global existence of weak solutions to the NPB system with periodic boundary conditions in 3D, with initial conditions $\nabla\Psi_0\in L^2$, $c_i(0)\in L^1$, $c_i(0)\log c_i(0)\in L^1$, $u_0\in H$, $T_0\in L^{3+\delta}$ for any arbitrary $\delta>0$, and under the assumptions that $c_i(0)$ are nonnegative and $T_0\geq T^*>0$ (which follows from the Third Law of Thermodynamics). The slightly higher regularity requirement for $T_0$ is due to the cubic nonlinearity in the electromigration term. Additionally,  we study the long-time dynamics of weak solutions and establish the exponential decay in time of $(c_i(t), u(t), T(t))$ to $(\overline{c_i(0)}, 0, \overline{T_0})$ and of the relative entropy $\int_{\TT^3} c_i(t) \log\frac{c_i(t)}{\bar{c}_i}$ to $0$ under some size constraint on the initial condition $\overline{c_i(0)}$. 

Many interesting open problems need further investigation for the NPB system. One such problem is the existence of global weak solutions where the ionic species have different diffusivities. Another natural question is whether the 3D NPB model has global weak solutions in the presence of boundaries with prescribed physical boundary conditions. Also, the long-time dynamics of solutions to the NPB system without any assumptions on $\overline{c_i(0)}$ is an open problem.

\section*{Acknowledgement}
R.H. was partially supported by a grant from the Simons Foundation (MP-TSM-00002783). Q.L. was partially supported by the AMS-Simons travel grant.

\bibliographystyle{plain}

\bibliography{reference}

\begin{thebibliography}{10}

\bibitem{abdo2021long}
Elie Abdo and Mihaela Ignatova.
\newblock Long time finite dimensionality in charged fluids.
\newblock {\em Nonlinearity}, 34(9):6173, 2021.

\bibitem{abdo2022space}
Elie Abdo and Mihaela Ignatova.
\newblock On the space analyticity of the {Nernst-Planck-Navier-Stokes} system.
\newblock {\em Journal of Mathematical Fluid Mechanics}, 24(2):51, 2022.

\bibitem{abdo2024long}
Elie Abdo and Mihaela Ignatova.
\newblock Long time dynamics of {N}ernst-{P}lanck-{N}avier-{S}tokes systems.
\newblock {\em Journal of Differential Equations}, 379:794--828, 2024.

\bibitem{abdo2024logarithmic}
Elie Abdo and Fizay-Noah Lee.
\newblock Logarithmic sobolev inequalities for bounded domains and applications
  to drift-diffusion equations.
\newblock {\em arXiv preprint arXiv:2402.18572}, 2024.

\bibitem{abdo2022global}
Elie Abdo, Fizay-Noah Lee, and Weinan Wang.
\newblock On the global well-posedness and gevrey regularity of some
  electrodiffusion models.
\newblock {\em arXiv preprint arXiv:2211.07686}, 2022.

\bibitem{alkhad2022electrochemical}
Mohammad~A. Alkhadra, Xiao Su, Matthew~E. Suss, Huanhuan Tian, Eric~N. Guyes,
  Amit~N. Shocron, Kameron~M. Conforti, J.~Pedro de~Souza, Nayeong Kim, Michele
  Tedesco, Khoiruddin Khoiruddin, I~Gede Wenten, Juan~G. Santiago, T.~Alan
  Hatton, and Martin~Z. Bazant.
\newblock Electrochemical methods for water purification, ion separations, and
  energy conversion.
\newblock {\em Chemical Reviews}, 122(16):13547--13635, 2022.

\bibitem{biler1994debye}
Piotr Biler, Waldemar Hebisch, and Tadeusz Nadzieja.
\newblock The {D}ebye system: existence and large time behavior of solutions.
\newblock {\em Nonlinear Analysis: Theory, Methods \& Applications},
  23(9):1189--1209, 1994.

\bibitem{bothe2014global}
Dieter Bothe, Andr{\'e} Fischer, and Jurgen Saal.
\newblock Global well-posedness and stability of electrokinetic flows.
\newblock {\em SIAM journal on Mathematical Analysis}, 46(2):1263--1316, 2014.

\bibitem{cole1965electrodiffusion}
Kenneth~S Cole.
\newblock Electrodiffusion models for the membrane of squid giant axon.
\newblock {\em Physiological Reviews}, 45(2):340--379, 1965.

\bibitem{constantin2019nernst}
Peter Constantin and Mihaela Ignatova.
\newblock On the {Nernst-Planck-Navier-Stokes} system.
\newblock {\em Archive for Rational Mechanics and Analysis}, 232:1379--1428,
  2019.

\bibitem{constantin2021interior}
Peter Constantin, Mihaela Ignatova, and Fizay-Noah Lee.
\newblock Interior electroneutrality in {Nernst-Planck-Navier-Stokes} systems.
\newblock {\em Archive for Rational Mechanics and Analysis}, 242(2):1091--1118,
  2021.

\bibitem{constantin2021nernst}
Peter Constantin, Mihaela Ignatova, and Fizay-Noah Lee.
\newblock {Nernst-Planck-Navier-Stokes} systems far from equilibrium.
\newblock {\em Archive for Rational Mechanics and Analysis}, 240:1147--1168,
  2021.

\bibitem{constantin2022existence}
Peter Constantin, Mihaela Ignatova, and Fizay-Noah Lee.
\newblock Existence and stability of nonequilibrium steady states of
  {Nernst-Planck-Navier-Stokes} systems.
\newblock {\em Physica D: Nonlinear Phenomena}, 442:133536, 2022.

\bibitem{constantin2022nernst}
Peter Constantin, Mihaela Ignatova, and Fizay-Noah Lee.
\newblock {Nernst-Planck-Navier-Stokes} systems near equilibrium.
\newblock {\em Pure and Applied Functional Analysis}, 7:175--196, 2022.

\bibitem{fischer2017global}
Andr{\'e} Fischer and J{\"u}rgen Saal.
\newblock Global weak solutions in three space dimensions for electrokinetic
  flow processes.
\newblock {\em Journal of Evolution Equations}, 17(1):309--333, 2017.

\bibitem{gajewski1986basic}
Herbert Gajewski and Konrad Gr{\"o}ger.
\newblock On the basic equations for carrier transport in semiconductors.
\newblock {\em Journal of Mathematical Analysis and Applications},
  113(1):12--35, 1986.

\bibitem{gao2014high}
Jun Gao, Wei Guo, Dan Feng, Huanting Wang, Dongyuan Zhao, and Lei Jiang.
\newblock High-performance ionic diode membrane for salinity gradient power
  generation.
\newblock {\em Journal of the American Chemical Society}, 136(35):12265--12272,
  2014.

\bibitem{gray1976validity}
Donald~D Gray and Aldo Giorgini.
\newblock The validity of the {Boussinesq} approximation for liquids and gases.
\newblock {\em International Journal of Heat and Mass Transfer},
  19(5):545--551, 1976.

\bibitem{ignatova2021global}
Mihaela Ignatova and Jingyang Shu.
\newblock Global solutions of the {Nernst-Planck-Euler} equations.
\newblock {\em SIAM journal on Mathematical Analysis}, 53(5):5507--5547, 2021.

\bibitem{ignatova2022global}
Mihaela Ignatova and Jingyang Shu.
\newblock Global smooth solutions of the {Nernst-Planck-Darcy} system.
\newblock {\em Journal of Mathematical Fluid Mechanics}, 24(1):26, 2022.

\bibitem{jasielec2021electrodiffusion}
Jerzy~J Jasielec.
\newblock Electrodiffusion phenomena in neuroscience and the
  {Nernst-Planck-Poisson} equations.
\newblock {\em Electrochem}, 2(2):197--215, 2021.

\bibitem{jerome2009global}
Joseph~W Jerome and Riccardo Sacco.
\newblock Global weak solutions for an incompressible charged fluid with
  multi-scale couplings: initial--boundary-value problem.
\newblock {\em Nonlinear Analysis: Theory, Methods \& Applications},
  71(12):e2487--e2497, 2009.

\bibitem{koch2004biophysics}
Christof Koch.
\newblock {\em Biophysics of computation: information processing in single
  neurons}.
\newblock Oxford University Press, 2004.

\bibitem{lee2016membrane}
Anna Lee, Jeffrey~W Elam, and Seth~B Darling.
\newblock Membrane materials for water purification: design, development, and
  application.
\newblock {\em Environmental Science: Water Research \& Technology},
  2(1):17--42, 2016.

\bibitem{lee2018diffusiophoretic}
Hyomin Lee, Junsuk Kim, Jina Yang, Sang~Woo Seo, and Sung~Jae Kim.
\newblock Diffusiophoretic exclusion of colloidal particles for continuous
  water purification.
\newblock {\em Lab on a Chip}, 18(12):1713--1724, 2018.

\bibitem{liu2020global}
Jian-Guo Liu and Jinhuan Wang.
\newblock Global existence for {Nernst-Planck-Navier-Stokes} system in ${R}^n$.
\newblock {\em Communications in Mathematical Sciences}, 18(6):1743--1754,
  2020.

\bibitem{lopreore2008computational}
Courtney~L Lopreore, Thomas~M Bartol, Jay~S Coggan, Daniel~X Keller, Gina~E
  Sosinsky, Mark~H Ellisman, and Terrence~J Sejnowski.
\newblock Computational modeling of three-dimensional electrodiffusion in
  biological systems: application to the node of ranvier.
\newblock {\em Biophysical Journal}, 95(6):2624--2635, 2008.

\bibitem{mayeli2021buoyancy}
Peyman Mayeli and Gregory~J Sheard.
\newblock Buoyancy-driven flows beyond the {Boussinesq} approximation: A brief
  review.
\newblock {\em International Communications in Heat and Mass Transfer},
  125:105316, 2021.

\bibitem{mock1983analysis}
Michael~Stephen Mock.
\newblock {\em Analysis of mathematical models of semiconductor devices}.
\newblock Boole Press, 1983.

\bibitem{mori2009numerical}
Yoichiro Mori and Charles Peskin.
\newblock A numerical method for cellular electrophysiology based on the
  electrodiffusion equations with internal boundary conditions at membranes.
\newblock {\em Communications in Applied Mathematics and Computational
  Science}, 4(1):85--134, 2009.

\bibitem{nicholson2000diffusion}
Charles Nicholson, Kevin~C Chen, Sabina Hrab{\v{e}}tov{\'a}, and Lian Tao.
\newblock Diffusion of molecules in brain extracellular space: theory and
  experiment.
\newblock {\em Progress in Brain Research}, 125:129--154, 2000.

\bibitem{petcu2009some}
Madalina Petcu, Roger~M Temam, and Mohammed Ziane.
\newblock Some mathematical problems in geophysical fluid dynamics.
\newblock {\em Handbook of Numerical Analysis}, 14:577--750, 2009.

\bibitem{pods2013electrodiffusion}
Jurgis Pods, Johannes Sch{\"o}nke, and Peter Bastian.
\newblock Electrodiffusion models of neurons and extracellular space using the
  {Poisson-Nernst-Planck} equations--numerical simulation of the intra-and
  extracellular potential for an axon model.
\newblock {\em Biophysical Journal}, 105(1):242--254, 2013.

\bibitem{qian1989electro}
Ning Qian and TJ~Sejnowski.
\newblock An electro-diffusion model for computing membrane potentials and
  ionic concentrations in branching dendrites, spines and axons.
\newblock {\em Biological Cybernetics}, 62(1):1--15, 1989.

\bibitem{rubinstein1990electro}
Isaak Rubinstein.
\newblock {\em Electro-diffusion of ions}.
\newblock SIAM, 1990.

\bibitem{rubinstein2000electro}
Isaak Rubinstein and Boris Zaltzman.
\newblock Electro-osmotically induced convection at a permselective membrane.
\newblock {\em Physical Review E}, 62(2):2238, 2000.

\bibitem{rubinstein2008direct}
Shmuel~M Rubinstein, Gor Manukyan, Adrian Staicu, Issac Rubinstein, Boris
  Zaltzman, Rob~GH Lammertink, Frieder Mugele, and Matthias Wessling.
\newblock Direct observation of a nonequilibrium electro-osmotic instability.
\newblock {\em Physical Review Letters}, 101(23):236101, 2008.

\bibitem{ryham2009existence}
Rolf~J Ryham.
\newblock Existence, uniqueness, regularity and long-term behavior for
  dissipative systems modeling electrohydrodynamics.
\newblock {\em arXiv preprint arXiv:0910.4973}, 2009.

\bibitem{savtchenko2017electrodiffusion}
Leonid~P Savtchenko, Mu~Ming Poo, and Dmitri~A Rusakov.
\newblock Electrodiffusion phenomena in neuroscience: a neglected companion.
\newblock {\em Nature Reviews Neuroscience}, 18(10):598--612, 2017.

\bibitem{schmuck2009analysis}
Markus Schmuck.
\newblock Analysis of the {Navier-Stokes-Nernst-Planck-Poisson} system.
\newblock {\em Mathematical Models and Methods in Applied Sciences},
  19(06):993--1014, 2009.

\bibitem{shen2022stability}
Rong Shen and Yong Wang.
\newblock Stability of the nonconstant stationary solution to the damped
  {Poisson-Nernst-Planck-Euler} equations.
\newblock {\em Mathematical Methods in the Applied Sciences},
  45(16):10331--10346, 2022.

\bibitem{tan2016computational}
Jinwang Tan and Emily~M Ryan.
\newblock Computational study of electro-convection effects on dendrite growth
  in batteries.
\newblock {\em Journal of Power Sources}, 323:67--77, 2016.

\bibitem{Tsybakov2008}
Alexandre~B Tsybakov.
\newblock {\em Introduction to nonparametric estimation}.
\newblock Springer Series in Statistics, Springer New York, 2008.

\bibitem{yang2010numerically}
JianWen Yang, ZuoHai Feng, XianRong Luo, and YuanRong Chen.
\newblock Numerically quantifying the relative importance of topography and
  buoyancy in driving groundwater flow.
\newblock {\em Science in China Series D: Earth Sciences}, 53(1):64--71, 2010.

\bibitem{yang2019review}
Zi~Yang, Yi~Zhou, Zhiyuan Feng, Xiaobo Rui, Tong Zhang, and Zhien Zhang.
\newblock A review on reverse osmosis and nanofiltration membranes for water
  purification.
\newblock {\em Polymers}, 11(8):1252, 2019.

\bibitem{zaltzman2007electro}
Boris Zaltzman and Isaak Rubinstein.
\newblock Electro-osmotic slip and electroconvective instability.
\newblock {\em Journal of Fluid Mechanics}, 579:173--226, 2007.

\bibitem{zhang2015global}
Zeng Zhang and Zhaoyang Yin.
\newblock Global well-posedness for the {Euler-Nernst-Planck-Poisson} system in
  dimension two.
\newblock {\em Nonlinear Analysis}, 125:30--53, 2015.

\bibitem{zhang2020inviscid}
Zeng Zhang and Zhaoyang Yin.
\newblock The inviscid limit and well-posedness for the
  {Euler-Nernst-Planck-Poisson} system.
\newblock {\em Applicable Analysis}, 99(2):181--213, 2020.

\bibitem{zhu2020ion}
Haitao Zhu, Bo~Yang, Congjie Gao, and Yaqin Wu.
\newblock Ion transfer modeling based on {N}ernst--{P}lanck theory for saline
  water desalination during electrodialysis process.
\newblock {\em Asia-Pacific Journal of Chemical Engineering}, 15(2):e2410,
  2020.

\end{thebibliography}

\appendix
\section{Global Regularity of the Mollified NPB System}\label{sec:appendix-a}

In this appendix we prove Proposition \ref{thm:mollified-sys} on the global well-posedness of the mollified system \eqref{sys:npb-full-mo}.
We fix some $\eta>0$, and drop the superscript $\eta$ when there is no confusion. As for the initial data, we keep the superscript $\eta$ to distinguish between the mollified and unmollified initial data.

\subsection{An iteration scheme}
For each $n\geq 0$, we consider the following iterative system
\noeqref{eqn:npb-full-mo-iteration-1}
\noeqref{eqn:npb-full-mo-iteration-2}
\noeqref{eqn:npb-full-mo-iteration-3}
\noeqref{eqn:npb-full-mo-iteration-4}
\noeqref{eqn:npb-full-mo-iteration-5}
\begin{subequations}\label{sys:npb-full-mo-iteration}
    \begin{align}
        &\partial_t c_i^{(n+1)} + \mathcal J_\eta   u^{(n+1)}\cdot \nabla c_i^{(n+1)}  - D\Delta c_i^{(n+1)} - D\frac{e}{k_B}\nabla\cdot (z_i\frac{1}{T^{(n+1)}} c_i^{(n+1)} \nabla \mathcal J_\eta \mathcal J_\eta \Psi^{(n)})= 0,\label{eqn:npb-full-mo-iteration-1}
        \\
        &-\varepsilon\Delta \Psi^{(n+1)} = \rho^{(n+1)} = \sum\limits_{i=1}^N e N_A z_i c_i^{(n+1)}, \label{eqn:npb-full-mo-iteration-2}
        \\
        &\partial_t u^{(n+1)} + \mathcal J_\eta u^{(n)}\cdot \nabla u^{(n+1)} - \nu\Delta u^{(n+1)} + \nabla p^{(n+1)}
        \\ 
        &\hspace{2cm}= g\left(\alpha_T(T^{(n+1)}-T_r) - \alpha_S\left(\sum\limits_{i=1}^N c^{(n)}_i M_i-S_r\right)\right) \vec{k} - \mathcal J_\eta   \left( \rho^{(n)} \nabla \mathcal J_\eta \mathcal J_\eta \Psi^{(n)} \right), \label{eqn:npb-full-mo-iteration-3}
        \\
        &\nabla\cdot u^{(n+1)} = 0,\label{eqn:npb-full-mo-iteration-4} 
        \\
        &\partial_t T^{(n+1)} + \mathcal J_\eta   u^{(n)} \cdot \nabla T^{(n+1)} - \kappa\Delta T^{(n+1)} = 0,\label{eqn:npb-full-mo-iteration-5}
    \end{align}
\end{subequations}
with initial conditions $(c^\eta_i(0), u^\eta_0, T_0^\eta)$. The constants $T_r= \overline{T_0^\eta}$ and $S_r = \sum\limits_{i=1}^N \overline{c^\eta_i(0)} M_i$ are fixed and independent of the spatial-time variables and the index $n$. In addition, we set
\[
 c_i^{(0)} = c^\eta_i(0), \quad u^{(0)}= u^\eta_0, \quad T^{(0)} = T_0^\eta, \quad -\varepsilon\Delta \Psi^{(0)} = \rho^{(0)} = \sum\limits_{i=1}^N e N_Az_i c_i^{(0)},
\]
which are all constants in time. 

For $n=0$, as $u^{(0)}$ is smooth and \eqref{eqn:npb-full-mo-iteration-5} is a linear transport-diffusion equation with periodic boundary conditions and smooth initial conditions, it has a unique smooth solution $T^{(1)}$ for any $t\geq 0$. Moreover, as $T_0^\eta\geq T^*$, we have $T^{(1)}\geq T^*.$
Next, since equation \eqref{eqn:npb-full-mo-iteration-3} is linear in $u^{(1)}$ and has smooth forcing terms depending on $T^{(1)}$ and $c_i^{(0)}$, and $u^{(1)}$ is transported by a smooth velocity field $\mathcal J_\eta u^{(0)}$, we infer the existence of a unique smooth solution $u^{(1)}$ for any $t\geq 0$. As $T^{(1)}\geq T^*$, one has that $\frac{1}{T^{(1)}}$ is smooth. Since equation \eqref{eqn:npb-full-mo-iteration-1} is linear in $c_i^{(1)}$ with all other functions in the equation being smooth, one can obtain a unique smooth solution $c_i^{(1)}$ for each $i=1,2,\dots,N$ and for each $t\geq 0$. In addition, following the argument in \cite{constantin2019nernst} and the fact that 
$\int_0^t\|\frac{1}{T^{(1)}}\|_{L^\infty}^2 \| \na\mathcal J_\eta \mathcal J_\eta \Psi^{(0)}\|_{L^\infty}^2 dt'$ 
is bounded for any $t\geq 0$, we deduce that $c_i^{(1)}\geq 0$. 

Having constructed smooth solutions $c_i^{(1)}$, $u^{(1)}$, $T^{(1)}$, $\rho^{(1)}$, and $\Psi^{(1)}$ for the iteration $n=0$,  we can proceed in the same manner and obtain another family of solutions $c_i^{(2)}$, $u^{(2)}$, $T^{(2)}$, $\rho^{(2)}$, and $\Psi^{(2)}$ corresponding to the iteration $n=1$. Repeating the same argument, one can obtain a sequence of smooth solutions $c_i^{(n)}$, $u^{(n)}$, $T^{(n)}$, $\rho^{(n)}$, and $\Psi^{(n)}$ for each $n\in \NN$. In addition, it holds that $T^{(n)}\geq T^*$ and $c_i^{(n)} \geq 0$ for all times $t \ge 0$ and all $n \in \NN$. As $\overline{T^{(n)}} = \overline{T_0^\eta}$ and $\overline{c_i^{(n)}} = \overline{c_i^\eta(0)}$ are invariant in time, one has $\overline{u^{(n)}} = \overline{u_0^\eta} = 0$.

\subsection{Energy estimates}
Having a sequence of solutions for $n\in \mathbb N$, we now establish the energy estimates that are needed to prove the global well-posedness and regularity of the mollified system. These estimates will be performed in several steps.

\smallskip

\noindent {\bf Step 1. The bound for $\|T^{(n+1)}\|_{L^2}$.} 
By integrating by parts and using the divergence-free condition of $\mathcal J_\eta  u^{(n)}$, the $L^2$ norm of the iterative temperature $T^{(n+1)}$ evolves according to 
$$
\frac{1}{2} \frac{d}{d t}\|T^{(n+1)}\|_{L^2}^2+\kappa\|\nabla T^{(n+1)}\|_{L^2}^2=0,
$$
which implies that for any $t \ge 0$,
\begin{align}\label{est:iteration-TL2}
    \|T^{(n+1)}(t)\|_{L^2}^2 + 2\kappa \int_0^t \|\nabla T^{(n+1)}(s)\|_{L^2}^2 ds = \|T^\eta_0\|_{L^2}^2.
\end{align}

\noindent {\bf Step 2. The bound for $\|c_i^{(n+1)}\|_{L^2}$.}
The $L^2$ norm of each concentration $c_i^{(n+1)}$ satisfies
\[
\frac{1}{2} \frac{d}{d t}\|c_i^{(n+1)}\|_{L^2}^2 + D \|\nabla c_i^{(n+1)}\|_{L^2}^2= -D\frac{e}{k_B} \int_{\mathbb{T}^3} z_i\frac{1}{T^{(n+1)}} c_i^{(n+1)} \nabla c_i^{(n+1)} \nabla \mathcal J_\eta \mathcal J_\eta \Psi^{(n)}  dx,
\] due to the divergence-free condition obeyed by $\mathcal J_\eta   u^{(n+1)}$. 
Applications of H\"older's, Young's, and the Sobolev inequalities give rise to the energy inequality
\begin{align*}
   \frac{1}{2} \frac{d}{d t}\|c_i^{(n+1)}\|_{L^2}^2 + \frac{1}{2}D \|\nabla c_i^{(n+1)}\|_{L^2}^2 \leq &C \left\|\frac{1}{T^{(n+1)}}\right\|_{L^\infty}^2 \|\nabla \mathcal J_\eta \mathcal J_\eta \Psi^{(n)} \|_{L^\infty}^2  \|c_i^{(n+1)}\|_{L^2}^2
   \\
   \leq & C \left(\frac{1}{T^*}\right)^{-2} \|\mathcal J_\eta  \mathcal J_\eta  \Psi^{(n)}\|_{H^3}^2 \|c_i^{(n+1)}\|_{L^2}^2.
\end{align*}
Using the smoothing  properties of the mollifiers and employing a standard duality argument, one has
\begin{align}\label{est:iteration-duality}
    \|\mathcal J_\eta  \mathcal J_\eta  \Psi^{(n)}\|_{H^3} \leq C \eta^{-3} \| \rho^{(n)}\|_{H^{-2}}\leq C \eta^{-3} \| \rho^{(n)}\|_{L^1} \leq C\eta^{-3} \sum\limits_{i=1}^N \|c_i^{(n)}\|_{L^1} = C\eta^{-3} \sum\limits_{i=1}^N \overline{c_i^\eta(0)},
\end{align}
 where the last equality follows from the nonnegativity of the ionic concentrations $c_i^{(n+1)}$ and the fact that their spacial means are invariant in time. Consequently, one can infer that
\begin{align*}
   \frac{1}{2} \frac{d}{d t}\|c_i^{(n+1)}\|_{L^2}^2 + \frac{1}{2}D \|\nabla c_i^{(n+1)}\|_{L^2}^2 \leq  C \eta^{-6} \left(\frac{1}{T^*}\right)^{-2} \sum\limits_{i=1}^N \overline{c_i^\eta(0)}^2 \|c_i^{(n+1)}\|_{L^2}^2 =: \widetilde{C}\|c_i^{(n+1)}\|_{L^2}^2,
\end{align*}
where $\widetilde{C}$ is a constant independent of space and time. By Gr\"onwall's inequality, for any $t\geq 0$ we have
\begin{align}\label{est:iteration-ciL2}
    \|c_i^{(n+1)}(t)\|_{L^2}^2 + \int_0^t D\|\nabla c_i^{(n+1)}(s)\|_{L^2}^2 e^{\widetilde{C}(t-s)} ds \leq \|c_i^\eta(0)\|_{L^2}^2 e^{\widetilde{C}t}.
\end{align} 


\noindent {\bf Step 3. The bound for $\|u^{(n+1)}\|_{L^2}$.} 
The $L^2$ norm of the iterative velocity $u^{(n+1)}$ satisfies
\begin{align*}
    \frac{1}{2} \frac{d}{d t}\|u^{(n+1)}\|_{L^2}^2+\nu\|\nabla u^{(n+1)}\|_{L^2}^2=&-\int_{\mathbb{T}^3} \mathcal J_\eta  \left(\rho^{(n)} \nabla\mathcal J_\eta \mathcal J_\eta \Psi^{(n)} \right)\cdot u^{(n+1)} dx 
    \\
    &+ \int_{\mathbb{T}^3} \left(g\alpha_T (T^{(n+1)}-T_r) u^{(n+1)}_3 -g\alpha_S(\sum\limits_{i=1}^N c_i^{(n)} M_i - S_r) u^{(n+1)}_3  \right)dx.
\end{align*}
By making use of the mean-free property obeyed by $u^{(n+1)}$, and applying H\"older's, Young's, the Poincar\'e and the Sobolev inequalities, we have
\begin{align*}
    &\frac{1}{2} \frac{d}{d t}\|u^{(n+1)}\|_{L^2}^2+\nu\|\nabla u^{(n+1)}\|_{L^2}^2
    \\
    \leq &C \Big(\sum\limits_{i=1}^N \|c_i^{(n)}\|_{L^2} \|\nabla \mathcal J_\eta \mathcal J_\eta \Psi^{(n)} \|_{L^\infty} + \|T^{(n+1)}\|_{L^2} + \sum\limits_{i=1}^N \|c_i^{(n)}\|_{L^2} \Big) \|u^{(n+1)}\|_{L^2}
    \\
    \leq &C(\sum\limits_{i=1}^N \|c_i^{(n)}\|_{L^2}^2 + \|T^{(n+1)}\|_{L^2}^2) + \frac\nu2 \|\na u^{(n+1)}\|_{L^2}^2,
\end{align*}
where the second inequality follows from the uniform-in-time bound \eqref{est:iteration-duality} for the mollified iterative potential. As $c_i^{(n)}$ obeys the same estimate derived for $c_i^{(n+1)}$ in \eqref{est:iteration-ciL2}, and thanks to the uniform-in-time boundedness of the iterative temperatures in $L^2$ described by \eqref{est:iteration-TL2}, the latter differential inequality boils down to
\begin{align*}
    \frac{d}{d t}\|u^{(n+1)}\|_{L^2}^2+\nu\|\nabla u^{(n+1)}\|_{L^2}^2\leq C(\|c_i^\eta(0)\|_{L^2}^2 e^{\widetilde{C}t} + \|T_0^\eta\|_{L^2}^2).
\end{align*}
An application of Gr\"onwall's inequality yields 
\begin{align}\label{est:iteration-uL2}
    \|u^{(n+1)}(t)\|_{L^2}^2 + \nu \int_0^t \|\nabla u^{(n+1)}(s)\|_{L^2}^2 ds \leq \|u_0^\eta\|_{L^2}^2 + C(\|c_i^\eta(0)\|_{L^2}^2 e^{\widetilde{C}t} + \|T_0^\eta\|_{L^2}^2 t).
\end{align} 

\smallskip

\noindent {\bf Step 4. Bounds for the time derivatives.} 
In order to obtain good controls of $\partial_t T^{(n+1)}$, $\partial_t c_i^{(n+1)}$, and $\partial_t u^{(n+1)}$ in appropriate Lebesgue spaces, we take the inner product of equations \eqref{eqn:npb-full-mo-iteration-1} and \eqref{eqn:npb-full-mo-iteration-5} with a scalar test function $\phi\in L^2(0,\mathcal T;H^1)$, and \eqref{eqn:npb-full-mo-iteration-3} with a vector-valued test function $\varphi\in L^2(0,\mathcal T;V)$. By H\"older's inequality and the Sobolev inequality,  we infer that 
\begin{align*}
    \left|\langle \mathcal J_\eta   u^{(n+1)} \cdot\na c_i^{(n+1)}, \phi\rangle\right| \leq C\|\mathcal J_\eta  u^{(n+1)}\|_{L^3} \|\na c_i^{(n+1)}\|_{L^2} \|\phi\|_{L^6} \leq C_\eta \|u^{(n+1)}\|_{L^2}  \|\na c_i^{(n+1)}\|_{L^2} \|\phi\|_{H^1}.
\end{align*}
Thanks to the uniform-in-time bounds \eqref{est:iteration-ciL2} and \eqref{est:iteration-uL2}, it follows that $\mathcal J_\eta  u^{(n+1)} \cdot\na c_i^{(n+1)}\in L^{2}(0,\mathcal T;H^{-1})$. Similarly, we can show that $\mathcal J_\eta u^{(n)} \cdot\na u^{(n+1)}\in L^{2}(0,\mathcal T;V')$ and $\mathcal J_\eta  u^{(n)} \cdot\na T^{(n+1)}\in L^{2}(0,\mathcal T;H^{-1})$. As for the electromigration term, we integrate by parts and use the potential bound \eqref{est:iteration-duality} to estimate 
\begin{align*}
   \left|\left\langle \frac{1}{T^{(n+1)}} c_i^{(n+1)} \nabla \mathcal J_\eta \mathcal J_\eta \Psi^{(n)}, \na\phi\right\rangle\right| \leq &C \left\|\frac{1}{T^{(n+1)}}\right\|_{L^\infty} \|c_i^{(n+1)}\|_{L^2} \|\nabla \mathcal J_\eta \mathcal J_\eta \Psi^{(n)}\|_{L^\infty} \|\na\phi\|_{L^2}
   \\
   \leq &C\frac{1}{T^*} \eta^{-3} \sum\limits_{i=1}^N \overline{c_i^\eta(0)} \|c_i^{(n+1)}\|_{L^2}\|\phi\|_{H^1}.
\end{align*}
Consequently, we conclude that  $\na\cdot\left(\frac{1}{T^{(n+1)}} c_i^{(n+1)} \nabla \mathcal J_\eta \mathcal J_\eta \Psi^{(n)}\right)\in L^\infty(0,\mathcal T;H^{-1})$. The remaining terms can be estimated in a similar fashion. As a result, one has
\begin{align}\label{bound:time-de}
    &\{\partial_t c_i^{(n)},\partial_t T^{(n)}\}\,\, \text{are uniformly bounded in} \,\, L^{2}(0,\mathcal T;H^{-1}), 
    \\
    &\{\partial_t u^{(n)}\} \,\, \text{are uniformly bounded in} \,\, L^{2}(0,\mathcal T; V').
\end{align}

\subsection{Existence of weak solutions}\label{subsec:weak-sol}
The proof of the existence of weak solutions follows similarly to the one in Section \ref{sec:existence}, and is indeed much simpler since the sequence of solutions (in $n$) to the iteration scheme \eqref{sys:npb-full-mo-iteration} have better estimates compared to the sequence of solutions (in $\eta_k$) to the mollified system \eqref{sys:npb-full-mo}. Therefore, we omit the details here.

\subsection{Regularity of solutions} 
We fix $\eta>0$ and again drop the superscript $\eta$ except for the initial conditions. For any $s\in\mathbb N$, consider a multi-index $\alpha\in \mathbb N^3$ such that $|\alpha|\leq s$. Then the $H^s$ evolution of $T$ reads
\begin{align*}
    \frac{d}{dt}\|T\|_{H^s}^2 + 2\kappa \|\nabla T\|_{H^s}^2 = - \sum\limits_{0\leq |\alpha|\leq s} \langle D^\alpha (\mathcal J_\eta  u \cdot \nabla T), D^\alpha T\rangle.
\end{align*}
As $\mathcal J_\eta  u$ is divergence-free, we have $\langle  \mathcal J_\eta  u \cdot \nabla D^\alpha T, D^\alpha T\rangle = 0$.  We use the H\"older and the Sobolev inequalities to  bound 
\begin{equation}\label{est-mollifed-reg-1}
    \begin{split}
        &\left|\sum\limits_{0\leq |\alpha|\leq s} \langle D^\alpha (\mathcal J_\eta  u \cdot \nabla T), D^\alpha T\rangle\right| = \left| \sum\limits_{0\leq |\alpha|\leq s} \sum\limits_{0<\beta\leq \alpha} \langle D^\beta \mathcal J_\eta  u \cdot D^{\alpha-\beta} \nabla T, D^\alpha T\rangle\right|
   \\
   \leq & C \sum\limits_{0\leq |\alpha|\leq s} \sum\limits_{0<\beta\leq \alpha} \|D^\beta \mathcal J_\eta  u\|_{H^2} \|D^{\alpha-\beta} \nabla T\|_{L^2} \|D^\alpha T\|_{L^2}
   \leq  C_{s,\eta} \|u\|_{L^2} \|T\|_{H^s}^2.
    \end{split}
\end{equation}
Thus it follows that
\begin{align*}
    \frac{d}{dt}\|T\|_{H^s}^2 + 2\kappa \|\nabla T\|_{H^s}^2 \leq  C_{s,\eta} \|u\|_{L^2} \|T\|_{H^s}^2.
\end{align*}
Application of Gr\"onwall's inequality and the regularity criterion $u\in C([0,\mathcal T]; L^2)$ (that holds for any $\mathcal T>0$) yields
\begin{equation}\label{est-mollifed-reg-2}
    \|T(t)\|_{H^s}^2 + 2\kappa \int_0^{\mathcal T} \|\nabla T(s)\|_{H^s}^2 ds \leq C_{s,\eta,\mathcal T, \|u_0\|_{L^2}, \|T_0\|_{L^2}},
\end{equation}
for any $t\in[0,\mathcal T]$. Here we have used the fact that $\|u_0^\eta\|_{L^2} \leq \|u_0\|_{L^2}$ and $\|T_0^\eta\|_{H^s} \leq C_{s,\eta}\|T_0\|_{L^2}$. Typically, as the initial condition $T_0^\eta \in C^\infty(\mathbb T^3)$, one can perform the estimate above for any $s\in \mathbb N$ and eventually conclude that
\begin{equation}\label{mollfied-T-reg}
    T \in C([0,\mathcal T]; C^\infty(\mathbb T^3)).
\end{equation}

Next, we address the time evolution of  the $H^s$ norms of the concentrations $c_i$. These are described by
\begin{align*}
    \frac{d}{dt}\|c_i\|_{H^s}^2 + 2D \|\nabla c_i\|_{H^s}^2 = - \sum\limits_{0\leq |\alpha|\leq s} \langle D^\alpha (\mathcal J_\eta  u \cdot \nabla c_i), D^\alpha c_i\rangle - D\frac{ez_i}{k_B} \sum\limits_{0\leq |\alpha|\leq s} \langle D^\alpha (\frac{1}{T} c_i \nabla \mathcal J_\eta  \mathcal J_\eta  \Psi), D^\alpha \nabla c_i\rangle.
\end{align*}
The first nonlinear term on the right-hand side can be estimated similarly as in \eqref{est-mollifed-reg-1}, and one can get
\begin{align*}
    \left|\sum\limits_{0\leq |\alpha|\leq s} \langle D^\alpha (\mathcal J_\eta  u \cdot \nabla c_i), D^\alpha c_i\rangle\right| \leq C_{s,\eta} \|u\|_{L^2} \|c_i\|_{H^s}^2.
\end{align*}
For any $m\in\mathbb N$ and $|\alpha|=m$, the derivative $D^\alpha (\frac1T)$ consists of terms whose denominators are $T^{m'}$ with $m'\leq m+1$, and whose numerators are the product of derivatives of $T$ with order at most $m$. Thanks to the fact that $T\geq T^*$ and \eqref{est-mollifed-reg-2}, one can conclude that
\[
  \left\|D^\alpha (\frac1T)\right\|_{L^\infty} \leq C \left\| \frac1T\right\|_{H^{s+2}} \leq C_{s,\eta,\mathcal T, \|u_0\|_{L^2}, \|T_0\|_{L^2}, T^*}.
\]
By H\"older's inequality and Young's inequality, and using an estimate similar to \eqref{est:iteration-duality} with $3$ replaced by $s+2$, we have
\begin{align*}
    \left| D\frac{ez_i}{k_B} \sum\limits_{0\leq |\alpha|\leq s} \langle D^\alpha (\frac{1}{T} c_i \nabla \mathcal J_\eta  \mathcal J_\eta  \Psi), D^\alpha \nabla c_i\rangle \right| \leq D \|\nabla c_i\|_{H^s}^2 + C \|c_i\|_{H^s}^2, 
\end{align*}
where the constant in the last step depends on $s$, $\eta$, $\|u_0\|_{L^2}$, $\|T_0\|_{L^2}$, and $\overline{c_i(0)}$. Hence, the Sobolev $H^s$ norm of $c_i$ obeys 
\begin{align*}
   \frac{d}{dt}\|c_i\|_{H^s}^2 + D \|\nabla c_i\|_{H^s}^2  \leq C \|c_i\|_{H^s}^2,
\end{align*}
which, together with Gr\"onwall's inequality, implies that
\begin{align}\label{est-mollifed-reg-3}
    \|c_i(t)\|_{H^s}^2 + D \int_0^{\mathcal T} \|\nabla c_i(s)\|_{H^s}^2 ds \leq C,
\end{align} for any $t \in [0, \mathcal{T}]$.
As $c^\eta_i(0) \in C^\infty(\mathbb T^3)$, the above is true for any $s\in \mathbb N$, and therefore,
\begin{equation}\label{mollfied-c-reg}
    c_i \in C([0,\mathcal T]; C^\infty(\mathbb T^3)), \quad \text{for } \, i=1,2,\dots,N.
\end{equation}

Finally, the $H^s$ evolution of $u$ reads
\begin{equation} \label{est-mollifed-reg-4}
    \begin{split}
        &\frac{d}{dt}\|u\|_{H^s}^2 + 2\nu \|\nabla u\|_{H^s}^2 
     \\
     = &- \sum\limits_{0\leq |\alpha|\leq s} \langle D^\alpha (\mathcal J_\eta  u \cdot \nabla u), D^\alpha u\rangle  - \sum\limits_{0\leq |\alpha|\leq s} g\alpha_S \left\langle D^\alpha \left(\sum\limits_{i=1}^N M_i( c_i - \overline{c_i^\eta(0)}) \right), D^\alpha u_3 \right\rangle
     \\
     &+ \sum\limits_{0\leq |\alpha|\leq s}  g\alpha_T \langle D^\alpha\left( T-\overline{T_0^\eta}\right), D^\alpha u_3 \rangle - \sum\limits_{0\leq |\alpha|\leq s} \langle D^\alpha\mathcal J_\eta \left(\rho\nabla\mathcal J_\eta  \mathcal J_\eta  \Psi\right), D^\alpha u \rangle. 
    \end{split}
\end{equation}
The first nonlinear term on the right-hand side can be estimated similarly as in \eqref{est-mollifed-reg-1}, and one obtains
\begin{align*}
    \left|\sum\limits_{0\leq |\alpha|\leq s} \langle D^\alpha (\mathcal J_\eta  u \cdot \nabla u), D^\alpha u\rangle\right| \leq C_{s,\eta} \|u\|_{L^2} \|u\|_{H^s}^2.
\end{align*}
By H\"older's and Young's inequalities, we have
\begin{align*}
    &\left|\sum\limits_{0\leq |\alpha|\leq s} g\alpha_S \left\langle D^\alpha \left(\sum\limits_{i=1}^N M_i( c_i - \overline{c_i^\eta(0)}) \right), D^\alpha u_3 \right\rangle \right| + \left| \sum\limits_{0\leq |\alpha|\leq s}  g\alpha_T \langle D^\alpha\left( T-\overline{T_0^\eta}\right), D^\alpha u_3 \rangle\right| 
    \\
    \leq & C (1+ \sum\limits_{i=1}^N \|c_i\|_{H^s}^2 + \|T\|_{H^s}^2) + C\|u\|_{H^s}^2.
\end{align*}
Using in addition the bound \eqref{est:iteration-duality}, we estimate
\begin{align*}
    \left|\sum\limits_{0\leq |\alpha|\leq s} \langle D^\alpha\mathcal J_\eta \left(\rho\nabla\mathcal J_\eta  \mathcal J_\eta  \Psi\right), D^\alpha u \rangle  \right| \leq C \sum\limits_{i=1}^N \|c_i\|_{L^2}^2  + C\|u\|_{H^s}^2.
\end{align*}
Combining the estimates above for the right-hand side of \eqref{est-mollifed-reg-4}, one has
\begin{align*}
     \frac{d}{dt}\|u\|_{H^s}^2 + 2\nu \|\nabla u\|_{H^s}^2 \leq C (1+ \sum\limits_{i=1}^N \|c_i\|_{H^s}^2 + \|T\|_{H^s}^2) + C(1+\|u\|_{L^2}) \|u\|_{H^s}^2.
\end{align*}
By Gr\"onwall's inequality and thanks to \eqref{est-mollifed-reg-2} and \eqref{est-mollifed-reg-3}, we deduce that
\begin{align*}
    \|u(t)\|_{H^s}^2 + 2\nu \int_0^{\mathcal T} \|\nabla u(s)\|_{H^s}^2 ds \leq C,
\end{align*} for any $t \in [0,\mathcal{T}]$.
As $u_0^\eta \in C^\infty(\mathbb T^3)$, the above is true for any $s\in \mathbb N$, and therefore,
\begin{equation}\label{mollfied-u-reg}
    u \in C([0,\mathcal T]; C^\infty(\mathbb T^3)).
\end{equation}

\subsection{Uniqueness of solutions} Fix $\eta>0$ and let $(u^1, c_i^1, T^1)$ and $(u^2, c_i^2, T^2)$ be two smooth solutions to system \eqref{sys:npb-full-mo} with the same initial conditions $(u_0^\eta, c_i^\eta(0), T_0^\eta)$. Denote their difference by $(u,c_i,T)=(u^1-u^2, c_i^1-c_i^2, T^1-T^2)$, and let $\rho=\rho^1-\rho^2 := \sum\limits_{i=1}^N eN_Az_i(c_i^1 - c_i^2)$ and $p=p^1-p^2$. Then $(u,c_i,T)$ satisfies
\begin{align*}
     &\partial_t c_i + \mathcal J_\eta  u\cdot \nabla c_i^1 + \mathcal J_\eta  u^2\cdot \nabla c_i  - D\Delta c_i + D\frac{e}{k_B}\nabla\cdot (z_i\frac{T}{T^1 T^2} c_i^1 \nabla \mathcal J_\eta \mathcal J_\eta \Psi^1)
     \\
     &\hspace{2cm}- D\frac{e}{k_B}\nabla\cdot (z_i\frac{1}{T^2} c_i \nabla \mathcal J_\eta \mathcal J_\eta \Psi^1) - D\frac{e}{k_B}\nabla\cdot (z_i\frac{1}{T^2} c_i^2 \nabla \mathcal J_\eta \mathcal J_\eta \Psi) = 0,
        \\
        &-\varepsilon\Delta \Psi = \rho = \sum\limits_{i=1}^N eN_A z_i c_i, 
        \\
        &\partial_t u + \mathcal J_\eta u\cdot \nabla u^1 + \mathcal J_\eta u^2\cdot \nabla u - \nu\Delta u + \nabla p 
        \\
         &\hspace{2cm}= g\left(\alpha_T T - \alpha_S\sum\limits_{i=1}^N c_i M_i\right) \vec{k} -\mathcal J_\eta  \left( \rho \nabla \mathcal J_\eta \mathcal J_\eta \Psi^1\right)  -\mathcal J_\eta  \left( \rho^2 \nabla \mathcal J_\eta \mathcal J_\eta \Psi\right), 
        \\
        &\nabla\cdot u = 0,
        \\
        &\partial_t T +\mathcal J_\eta   u\cdot \nabla T^1 +\mathcal J_\eta   u^2\cdot \nabla T - \kappa\Delta T = 0.
\end{align*}
The $L^2$ time evolution of $(c_i,u,T)$ is given by 
\begin{align*}
    &\frac{1}{2}\frac{d}{dt}\left(\|u\|_{L^2}^2 + \sum\limits_{i=1}^N \|c_i\|_{L^2}^2 + \|T\|_{L^2}^2 \right) + \nu \|\nabla u\|_{L^2}^2 + D\sum\limits_{i=1}^N \|\nabla c_i\|_{L^2}^2 + \kappa \|\nabla T\|_{L^2}^2
    \\
    = &  \sum\limits_{i=1}^N \Bigg[-\left\langle \mathcal J_\eta  u\cdot \nabla c_i^1, c_i\right\rangle +  D\frac{e}{k_B} \left\langle  z_i\frac{T}{T^1 T^2} c_i^1 \nabla \mathcal J_\eta \mathcal J_\eta \Psi^1, \nabla c_i\right\rangle - D\frac{e}{k_B} \left\langle  z_i\frac{1}{T^2} c_i \nabla \mathcal J_\eta \mathcal J_\eta \Psi^1, \nabla c_i\right\rangle 
    \\
    &\hspace{0.5cm}-  D\frac{e}{k_B} \left\langle  z_i\frac{1}{T^2} c_i^2 \nabla \mathcal J_\eta \mathcal J_\eta \Psi, \nabla c_i\right\rangle \Bigg] - \left\langle \mathcal J_\eta u\cdot \nabla u^1, u  \right\rangle +  \left\langle g\left(\alpha_T T - \alpha_S\sum\limits_{i=1}^N c_i M_i\right) ,u_3  \right\rangle
    \\
    &- \left\langle \mathcal J_\eta  \left( \rho \nabla \mathcal J_\eta \mathcal J_\eta \Psi^1\right) , u \right\rangle - \left\langle \mathcal J_\eta  \left( \rho^2 \nabla \mathcal J_\eta \mathcal J_\eta \Psi\right) , u \right\rangle - \left\langle \mathcal J_\eta u\cdot \nabla T^1, T  \right\rangle.
\end{align*}
Thanks to H\"older's inequality, Young's inequality, and the Sobolev inequality, it follows that
\begin{align*}
    &\sum\limits_{i=1}^N\left| \left\langle \mathcal J_\eta  u\cdot \nabla c_i^1, c_i\right\rangle \right| + \left| \left\langle \mathcal J_\eta  u\cdot \nabla u^1, u\right\rangle \right| + \left| \left\langle \mathcal J_\eta  u\cdot \nabla T^1, T\right\rangle \right| + \left| \left\langle g\left(\alpha_T T - \alpha_S\sum\limits_{i=1}^N c_i M_i\right) ,u_3  \right\rangle \right|
    \\
    &\hspace{1cm}+ \left| \left\langle \mathcal J_\eta  \left( \rho \nabla \mathcal J_\eta \mathcal J_\eta \Psi^1\right) , u \right\rangle \right| + \left| \left\langle \mathcal J_\eta  \left( \rho^2 \nabla \mathcal J_\eta \mathcal J_\eta \Psi\right) , u \right\rangle \right|
    \\
    \leq &C \left(1+ \sum\limits_{i=1}^N (\|c_i^1\|_{H^3} + \|c_i^2\|_{H^3} ) + \|u^1\|_{H^3} +  \|T^1\|_{H^3}\right) \left(\|u\|_{L^2}^2 + \sum\limits_{i=1}^N \|c_i\|_{L^2}^2 + \|T\|_{L^2}^2 \right).
\end{align*}
As $T^1, T^2 \geq T^*$, and thanks to \eqref{est:iteration-duality}, we have
\begin{align*}
     &\sum\limits_{i=1}^ND\frac{e}{k_B} \Bigg[ \left|\left\langle  z_i\frac{T}{T^1 T^2} c_i^1 \nabla \mathcal J_\eta \mathcal J_\eta \Psi^1, \nabla c_i\right\rangle \right| + \left|\left\langle  z_i\frac{1}{T^2} c_i \nabla \mathcal J_\eta \mathcal J_\eta \Psi^1, \nabla c_i\right\rangle  \right| 
     \\
     &\hspace{2cm}+ \left|\left\langle  z_i\frac{1}{T^2} c_i^2 \nabla \mathcal J_\eta \mathcal J_\eta \Psi, \nabla c_i\right\rangle \right| \Bigg]
     \\
     \leq & C \left(1+ \sum\limits_{i=1}^N \|c_i^1\|_{H^3} + \sum\limits_{i=1}^N \|c_i^2\|_{H^3} \right) \left( \sum\limits_{i=1}^N \|c_i\|_{L^2}^2 + \|T\|_{L^2}^2 \right) + \frac12 D \sum\limits_{i=1}^N \|\nabla c_i\|_{L^2}^2.
\end{align*}
Thus, it follows that 
\begin{align*}
     &\frac{d}{dt}\left(\|u\|_{L^2}^2 + \sum\limits_{i=1}^N \|c_i\|_{L^2}^2 + \|T\|_{L^2}^2 \right) + \nu \|\nabla u\|_{L^2}^2 + D\sum\limits_{i=1}^N \|\nabla c_i\|_{L^2}^2 + \kappa \|\nabla T\|_{L^2}^2
    \\
    \leq &C \left(1+ \sum\limits_{i=1}^N \|c_i^1\|_{H^3} + \sum\limits_{i=1}^N \|c_i^2\|_{H^3} + \|u^1\|_{H^3} +  \|T^1\|_{H^3}\right) \left(\|u\|_{L^2}^2 + \sum\limits_{i=1}^N \|c_i\|_{L^2}^2 + \|T\|_{L^2}^2 \right).
\end{align*}
Due to Gr\"onwall's inequality and the regularity \eqref{mollfied-T-reg}, \eqref{mollfied-c-reg}, and \eqref{mollfied-u-reg} obeyed by both $(u^1,c_i^1,T^1)$ and $(u^2,c_i^2,T^2)$, we conclude that $u=c_i=T=0$, from which we deduce the uniqueness of solutions. This completes the proof of Proposition \ref{thm:mollified-sys}.

\end{document}